\newtheorem{dfn}{Definition}[section]
\newtheorem{thm}[dfn]{Theorem}
\newtheorem{prop}[dfn]{Proposition}
\newtheorem{lem}[dfn]{Lemma}
\newtheorem*{cor*}{Corollary}
\theoremstyle{definition}
\newtheorem{exa}[dfn]{Example}
\newtheorem{conj}[dfn]{Conjecture}
\newcommand{\K}{K\"ahler }
\newcommand{\X}{\mathscr{X}}
\newcommand{\XX}{\mathscr{X}^{\iota}}
\newcommand{\xs}{\mathscr{X}/S}
\newcommand{\xss}{\mathscr{X}^{\iota}/S}
\newcommand{\Z}{\mathscr{Z}}
\newcommand{\zs}{\mathscr{Z}/S}
\newcommand{\mon}{\operatorname{Mon}^2}
\newcommand{\ktm}{\operatorname{KT}(M)}
\newcommand{\vol}{\operatorname{Vol}}
\newcommand{\rk}{\operatorname{rk}}
\newcommand{\sign}{\operatorname{sign}}
\newcommand{\G}{\mu_2}
\newcommand{\D}{\varDelta}
\newcommand{\cpq}[2]{c_1( R^{#2}h_* \Omega^{#1}_{\zs}, h_{L^2} )}
\begin{document} 

\title[Analytic torsion for IHS 4-folds with involution]{Analytic torsion for irreducible holomorphic symplectic fourfolds with involution, III: relation with the BCOV invariant}
\author{Dai Imaike}
\address{
Department of Mathematics,
Faculty of Science,
Kyoto University,
Kyoto 606-8502,
Japan}
\email{imaike.dai.22s@st.kyoto-u.ac.jp}

\maketitle

\numberwithin{equation}{section}

\begin{abstract}
	A Calabi-Yau 4-fold of Camere-Garbagnati-Mongardi is a crepant resolution of the quotient of a hyperk\"ahler 4-fold by an antisymplectic involution.
	In this paper, we compare two different types of holomorphic torsion invaiants;
	one is the BCOV invariant of the Calabi-Yau 4-fold of Camere-Garbagnati-Mongardi,
	and the other is the invariant of the corresponding $K3^{[2]}$-type manifold with involution introduced by the author in the preceding papers. 
	As an application, in some special cases, we show that the BCOV invariant of those Calabi-Yau 4-folds is expressed as the Petersson norm of a certain Borcherds product.
\end{abstract}

\setcounter{section}{-1}

\section{Introduction}\label{s-0}

	This paper is the third of a series of three papers investigating equivariant analytic torsion for manifolds of $K3^{[2]}$-type with antisymplectic involution.
	In \cite{I1}, we constructed an invariant of $K3^{[2]}$-type manifolds with involution by using equivariant analytic torsion
	and we proved that the invariant satisfies a certain curvature equation.
	In \cite{I2}, we proved the algebraicity of the singularity of the invariant
	and compared our invariant with the invariant of 2-elementary K3 surfaces constructed by Yoshikawa \cite{MR2047658}.
	In this paper, we compare our invariant with the BCOV invariant for a class of Calabi-Yau 4-folds introduced by Camere-Garbagnati-Mongardi \cite{MR3928256}.
	Let us recall BCOV torsion and BCOV invariants.
	
	In their study of mirror symmetry at genus one, Bershadsky-Cecotti-Ooguri-Vafa \cite{MR1301851} introduced the following combination of Ray-Singer analytic torsions:
	$$
		\mathcal{T}_{BCOV}(Z, \omega_Z) = \prod_{p \geqq 0} \tau(\overline{\Omega}^p_Z)^{(-1)^p p},
	$$
	where $\tau(\overline{\Omega}^p_Z)$ is the analytic torsion of $\Omega^p_Z$ on a compact Ricci-flat \K manifold $Z$. 
	This combination of analytic torsions is called the BCOV torsion.
	The conjecture of Bershadsky-Cecotti-Ooguri-Vafa claims the equivalence of the following two functions via the mirror map; one is the generating series of the genus one Gromov-Witten invariants of a Calabi-Yau 3-fold, and the other is the BCOV torsion viewed as a function on the base space of its mirror family of Calabi-Yau 3-folds.
	
	After Bershadsky-Cecotti-Ooguri-Vafa, in mathematics, the BCOV invariant for Calabi-Yau manifolds was introduced by Fang-Lu-Yoshikawa \cite{MR2454893} in dimension $3$, and by Eriksson-Freixas i Montplet-Mourougane \cite{MR4255041}, \cite{MR4475251} in arbitrary dimension.
	Besides the BCOV torsion, they introduced certain correction terms consisting of a Bott-Chern secondary class and various covolumes of cohomology lattices to get a genuine invariant (See \S \ref{ss-2-0}).
	Then the BCOV invariant of the mirror family of a Calabi-Yau hypersurface in projective space is determined by Fang-Lu-Yoshikawa in dimension $3$, and by Eriksson-Freixas i Montplet-Mourougane in arbitrary dimension.
	Moreover, Eriksson-Freixas i Montplet-Mourougane extend the mirror symmetry conjecture at genus one to arbitrary dimension \cite{MR4475251}.
	On the other hand, the genus one Gromov-Witten invariants of a Calabi-Yau hypersurface of a projective space was determined by Zinger \cite{MR2403808}, \cite{MR2505298}.
	By comparing the formula for the BCOV invariant and the generating function of the genus one Gromov-Witten invariants, the mirror symmetry conjecture at genus one between a Calabi-Yau hypersurface of $\mathbb{P}^n$ and its mirror family is established by Zinger \cite{MR2403808}, \cite{MR2505298} and Fang-Lu-Yoshikawa \cite{MR2454893} for $n=4$, and by Zinger \cite{MR2403808}, \cite{MR2505298} and Eriksson-Freixas i Montplet-Mourougane \cite{MR4255041}, \cite{MR4475251} for $n \geqq 5$.
	Based on the work of Popa \cite{MR3003261}, this result is extended to the case of a smooth complete intersection of two cubic hypersurfaces of $\mathbb{P}^5$ by Pochekai \cite{pochekai}, Eriksson-Pochekai \cite{eriksson-pochekai}.
	
	As for the BCOV invariant of complex tori of higher dimension and hyperk\"ahler manifolds, its triviality is proved by Eriksson-Freixas i Montplet-Mourougane \cite{MR4255041}.
	However, for Ricci-flat compact \K manifolds with torsion canonical bundle, the BCOV torsion is not necessarily trivial (\cite{MR2047658}). 
	For Enriques surfaces, one has the equivalence of the BCOV torsion and the genus one Gromov-Witten invariants by Oberdieck \cite{oberdieck2023non} and Yoshikawa \cite{MR2047658}.
	
	
	In this paper, we study the BCOV invariant of the Calabi-Yau 4-folds constructed by Camere-Garbagnati-Mongardi \cite{MR3928256},
	who obtained their Calabi-Yau 4-folds as a crepant resolution of the quotient of a hyperk\"ahler 4-fold by an antisymplectic involution. 
	Under a certain assumption, we show that the BCOV invariant of the Calabi-Yau 4-folds of Camere-Garbagnati-Mongardi, viewed as a function on the moduli space, coincides with the invariant of the $K3^{[2]}$-type manifolds with involution constructed by the author \cite{I1}, up to a universal constant.
	In particular, combining this result, the result in \cite{I2} and Yoshikawa's result \cite{MR2047658}, we show that the BCOV invariant of the universal covering of the Hilbert scheme of 2-points on an Enriques surface is expressed as the Petersson norm of the Borcherds $\Phi$-function.
	Furthermore, we show that the BCOV invariant of the Calabi-Yau 4-fold of Camere-Garbagnati-Mongardi obtained from a 2-elementary K3 surface with no fixed curves of positive genus, is expressed as the Petersson norm of the Borcherds product of Yoshikawa \cite{MR2674883} and Gritsenko \cite{MR3859399}.
	Let us explain our results in more details.
	
\medskip

	Let $X$ be a manifold of $K3^{[2]}$-type.
	Namely, $X$ is an irreducible holomorphic symplectic manifold deformation equivariant to the Hilbert scheme of $2$ points of a K3 surface.
	A holomorphic involution $\iota : X \to X$ satisfying $\iota^* \eta =-\eta$ is called antisymplectic.
	In this paper, involutions on an irreducible holomorphic symplectic manifold are always assumed to be antisymplectic.

	Let us briefly recall the invariant $\tau(X, \iota)$ of a $K3^{[2]}$-type manifold with antisymplectic involution $(X, \iota)$ constructed in \cite{I1}.
	Let $h_X$ be an $\iota$-invariant \K metric on $X$.
	For simplicity, we assume that $h_X$ is Ricci-flat and the volume of $(X, h_X)$ is $1$.
	Let $\tau_{\iota}(\bar{\Omega}^1_X)$ be the equivariant analytic torsion of the cotangent bundle $\Omega^1_X$ with respect to $h_X$ and the $\G$-action induced from $\iota$.
	The fixed locus of $\iota : X \to X$ is denoted by $X^{\iota}$.
	By \cite{MR2805992}, $X^{\iota}$ is a possibly disconnected smooth complex surface.
	Let $\tau(\bar{\mathcal{O}}_{X^{\iota}})$ be the analytic torsion of the trivial line bundle $\mathcal{O}_{X^{\iota}}$ with respect to $h_X|_{X^{\iota}}$.
	The volume of $X^{\iota}$ with respect to $h_X|_{X^{\iota}}$ is denoted by $\vol(X^{\iota}, h_X|_{X^{\iota}})$.
	We denote by $\vol_{L^2}\left( H^1(X^{\iota}, \mathbb{Z}), h_X|_{X^{\iota}} \right)$ the covolume of $H^1(X^{\iota}, \mathbb{Z})$
	with respect to the $L^2$-metric on $H^1(X^{\iota}, \mathbb{R})$ induced from $h_X|_{X^{\iota}}$.
	We define a real number $\tau(X, \iota)$ by
	\begin{align*}
		\tau(X, \iota)=\tau_{\iota}(\bar{\Omega}_X^1) 
		 \tau(\bar{\mathcal{O}}_{X^{\iota}})^{-2} \vol(X^{\iota}, h_X|_{X^{\iota}})^{-2} \vol_{L^2}(H^1(X^{\iota}, \mathbb{Z}), h_X|_{X^{\iota}}).
	\end{align*}
	By \cite{I1}, $\tau(X, \iota)$ is independent of the choice of a normalized $\iota$-invariant Ricci-flat \K metric $h_X$.
	Hence $\tau(X, \iota)$ is an invariant of $(X, \iota)$ (See \ref{ss-1-3} for more details).

	Let $M_0$ be a primitive hyperbolic 2-elementary sublattice of the K3 lattice $L_{K3}$.
	Let $(Y, \sigma)$ be a 2-elementary K3 surface of type $M_0$.
	Namely, $(Y, \sigma)$ is the pair consisting of a K3 surface $Y$ and an antisymplectic involution $\sigma : Y \to Y$ such that there exists an isometry $\alpha : H^2(Y, \mathbb{Z}) \to L_{K3}$ with $\alpha(H^2(Y, \mathbb{Z})^{\sigma})=M_0$.
	The Hilbert scheme of 2-points of $Y$ is denoted by $X_Y$
	and $\iota_Y : X_Y \to X_Y$ is the involution induced from $\sigma$.
	Then $(X_Y, \iota_X)$ is a manifold of $K3^{[2]}$-type with antisymplectic involution.
	The blowup of $X_Y$ along the fixed locus is denoted by $\widetilde{X}_Y$
	and the involution of $\widetilde{X}_Y$ induced from $\iota_Y$ is denoted by $\tilde{\iota}_Y$.
	Camere-Garbagnati-Mongardi \cite{MR3928256} proved that its quotient $Z_Y := \widetilde{X}_Y / \tilde{\iota}_Y$ is a Calabi-Yau 4-fold
	and the canonical map $Z_Y \to X_Y/ \iota_Y$ is a crepant resolution.

	Let $\tau_{BCOV}(Z_Y)$ be the BCOV invariant constructed by Eriksson-Freixas i Montplet-Mourougane \cite{MR4255041}, \cite{MR4475251}.
	One of the main results of this paper is a comparison theorem between $\tau(X_Y, \iota_Y)$ and $\tau_{BCOV}(Z_Y)$.

	\begin{thm}[Theorem \ref{t-2-3-4}]\label{t-0-1}
		Suppose that $\rk(M_0) \leqq 17$ 
		and that $\Delta(M_0^{\perp})$, the set of roots of $M_0^{\perp}$, consists of a unique $O(M_0^{\perp})$-orbit.
		Then there exists a constant $C_{M_0} >0$ depending only on $M_0$ such that for any 2-elementary K3 surface $(Y, \sigma)$ of type $M_0$
		$$
			\tau_{BCOV}(Z_Y) =C_{M_0} \tau (X_Y, \iota_Y).
		$$
	\end{thm}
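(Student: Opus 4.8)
The plan is to follow the three‑step strategy that is by now standard for identifying holomorphic torsion invariants on arithmetic quotients (compare Yoshikawa \cite{MR2047658} and Fang--Lu--Yoshikawa \cite{MR2454893}): regarded as functions on the moduli space $\mathcal{M}_{M_0}$ of $2$-elementary K3 surfaces of type $M_0$, one shows that $\tau_{BCOV}(Z_Y)$ and $\tau(X_Y,\iota_Y)$ satisfy the same curvature equation and have the same singularity along the discriminant divisor $\mathcal{D}_{M_0}$, and then one deduces that their ratio is constant. To set this up I would organise the constructions into families over $\mathcal{M}_{M_0}$ (or a suitable finite cover): a universal family $\mathscr{X}\to\mathcal{M}_{M_0}$ of $K3^{[2]}$-type manifolds with involution, the blow‑up $\widetilde{\mathscr{X}}\to\mathscr{X}$ of the fixed locus $\mathscr{X}^{\iota}$, and the family of Calabi--Yau fourfolds $\mathscr{Z}=\widetilde{\mathscr{X}}/\tilde\iota\to\mathcal{M}_{M_0}$, equipped with a fibrewise $\iota$-invariant Ricci-flat \K metric on $\mathscr{X}$ and compatible metrics on $\widetilde{\mathscr{X}}$ and $\mathscr{Z}$.

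The first and principal step is the curvature computation. I would write the curvature of the Quillen-type metric underlying $\tau_{BCOV}(Z_Y)$ via the curvature formula of Eriksson--Freixas i Montplet--Mourougane \cite{MR4255041}, and then re-express it in terms of data on $\mathscr{X}/\mathcal{M}_{M_0}$ by a Grothendieck--Riemann--Roch computation carried out through the two morphisms $\widetilde{\mathscr{X}}\to\mathscr{X}$ and $\widetilde{\mathscr{X}}\to\mathscr{Z}$. Concretely, the blow‑up formula for Bott--Chern secondary forms relates the Hodge bundles $R^{q}h_{*}\Omega^{p}_{\widetilde{\mathscr{X}}}$ to those of $\mathscr{X}/\mathcal{M}_{M_0}$ plus terms supported on $\mathscr{X}^{\iota}$ and on the exceptional divisor, while the quotient by $\tilde\iota$ (a double cover branched along a smooth divisor) cuts out the $\iota$-invariant part; combining these with the curvature equation for $\tau(X,\iota)$ from \cite{I1} reduces the comparison to matching the leftover contributions. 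Here the key point is that the correction terms $\tau(\bar{\mathcal{O}}_{X^{\iota}})$, $\vol(X^{\iota},h_{X}|_{X^{\iota}})$ and $\vol_{L^{2}}(H^{1}(X^{\iota},\mathbb{Z}),h_{X}|_{X^{\iota}})$ built into $\tau(X,\iota)$ are precisely the data (torsion of the trivial bundle, volume, $H^{1}$-covolume of the blown-up surface) occurring in the blow‑up correction, and that these, together with the lattice covolumes $\prod_{k}\vol_{L^2}(H^{k}(Z_Y,\mathbb{Z}))$ and the Bott--Chern secondary term in the Eriksson--Freixas i Montplet--Mourougane definition of $\tau_{BCOV}$, cancel or contribute only a locally constant term. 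I expect this bookkeeping — tracking every secondary class and every covolume factor through the blow‑up and the branched cover — to be the main obstacle. The output is the identity $dd^{c}\log\tau_{BCOV}(Z_Y)=dd^{c}\log\tau(X_Y,\iota_Y)$ on $\mathcal{M}_{M_0}$.

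The second step is the singularity analysis along $\mathcal{D}_{M_0}$, i.e.\ as $(Y,\sigma)$ degenerates so that $Y$ acquires a rational double point compatible with $\sigma$. Under the hypothesis that $\Delta(M_0^{\perp})$ is a single $O(M_0^{\perp})$-orbit the divisor $\mathcal{D}_{M_0}$ is irreducible, so only one singularity coefficient must be compared. Using the description of the singularity of $\tau(X_Y,\iota_Y)$ along $\mathcal{D}_{M_0}$ provided by \cite{I2}, and applying the degeneration formula for the BCOV invariant of Eriksson--Freixas i Montplet--Mourougane \cite{MR4475251} to a one-parameter degeneration of the family $\mathscr{Z}$ — whose central fibre is read off from the corresponding degeneration of $\widetilde{X}_Y$ — I would show that $\log\tau_{BCOV}(Z_Y)-\log\tau(X_Y,\iota_Y)$ is locally bounded near $\mathcal{D}_{M_0}$; verifying that the degeneration formula applies and identifying the central fibre of the degenerating Calabi--Yau family is the delicate part here.

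Combining the first two steps, $\varphi:=\log\tau_{BCOV}(Z_Y)-\log\tau(X_Y,\iota_Y)$ is pluriharmonic on $\mathcal{M}_{M_0}$ and locally bounded along $\mathcal{D}_{M_0}$, hence extends to a bounded pluriharmonic function on the arithmetic quotient $\Omega_{M_0^{\perp}}/\Gamma$. Since $\rk(M_0)\le 17$, this quotient has dimension $20-\rk(M_0)\ge 3$, so its Baily--Borel compactification adds only boundary strata of codimension $\ge 2$; therefore $\varphi$ extends further to a pluriharmonic function on a normal projective variety, and is constant by the maximum principle. Hence $\tau_{BCOV}(Z_Y)=C_{M_0}\,\tau(X_Y,\iota_Y)$ with $C_{M_0}=e^{\varphi}>0$, and since the moduli space and both invariants depend only on $M_0$, so does $C_{M_0}$.
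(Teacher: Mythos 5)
Your overall architecture (same curvature equation, control of the singularity along the discriminant, extension across the Baily--Borel boundary, maximum principle) matches the paper's, and your first and third steps are essentially what the paper does: the curvature identity is obtained not by a new anomaly/GRR bookkeeping through the blow-up, but simply by taking the already-known curvature equations for $\tau_{M,\mathcal{K}}$ (from \cite{I1}) and for $\tau_{BCOV}$ (from \cite{MR4255041}) and matching their right-hand sides via the decomposition $H^q(Z,\Omega^p_Z)\cong H^q(X,\Omega^p_X)_+\oplus H^{q-1}(X^{\iota},\Omega^{p-1}_{X^{\iota}})$, Serre/Poincar\'e duality, the hyperk\"ahler identity $\|\eta\wedge\alpha\|=|\lambda|\,\|\alpha\|$, and $\operatorname{Sym}^2H^2\cong H^4$; your phrasing is heavier than needed but not wrong in spirit.

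The genuine gap is in your second step. You propose to prove that $\log\tau_{BCOV}(Z_Y)-\log\tau(X_Y,\iota_Y)$ is \emph{locally bounded} near the discriminant by computing and matching the two logarithmic singularity coefficients — identifying the semistable central fibre of the degenerating Calabi--Yau family, applying the Eriksson--Freixas i Montplet--Mourougane degeneration formula, and comparing with the coefficient for $\tau$ from \cite{I2}. You flag this as ``the delicate part'' but give no argument, and it is precisely the computation one does not know how to do directly: the two coefficients come from entirely different sources (a limit mixed Hodge structure computation on the CY side versus the equivariant analysis of \cite{I2} on the $K3^{[2]}$ side), and there is no a priori reason visible in your sketch why they should coincide. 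The paper avoids this comparison altogether. It only uses that \emph{both} functions have singularities of the form $\alpha\log|s|^2+O(\log\log|s|^{-1})$ with $\alpha\in\mathbb{Q}$, so that $\log u=\log\tau_{BCOV}-\log\tau$ satisfies $dd^c\log u=a\,\delta_{\bar{\mathscr{D}}_{M_0^{\perp}}}$ as currents for a single unknown $a\in\mathbb{Q}$ (here the irreducibility of $\bar{\mathscr{D}}_{M_0^{\perp}}$, i.e.\ the single-orbit hypothesis on $\Delta(M_0^{\perp})$, is what forces one global coefficient). Then, since $\rk(M_0)\leqq 17$ makes the Baily--Borel boundary of codimension $\geqq 2$, one can choose a complete irreducible curve $C\subset\mathcal{M}_{M_0}$ meeting $\bar{\mathscr{D}}_{M_0^{\perp}}$ transversally and away from $\operatorname{Sing}C$; the residue theorem applied to $\partial\log u$ on the normalization of $C$ gives $(\#\,C\cap\bar{\mathscr{D}}_{M_0^{\perp}})\cdot a=0$, hence $a=0$. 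Only after that does one get the pluriharmonic extension across the discriminant and then across the boundary. To complete your proof you should either supply the missing coefficient computation (hard) or replace your step 2 by this residue-theorem argument, which uses the two hypotheses of the theorem exactly where they are needed.
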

	
	Fu-Zhang \cite{MR4549963} introduced the BCOV invariant of Calabi-Yau varieties with canonical singularities
	and they prove its birational equivalence.
	After the comparison theorem between $\tau$ and $\tau_{BCOV}$ such as Theorem \ref{t-0-1} and the result of Fu-Zhang \cite{MR4549963},
	it is natural to ask if the birational invariance of $\tau$ holds (See Conjecture \ref{c4-3-1}).
	
	\medskip
	
	Now, let us explain some applications of Theorem \ref{t-0-1}.
	Assume that $M_0= \Lambda(2)$, where $\Lambda$ is an even unimodular lattice of signature $(1,9)$.
	Here, for a lattice $L =(\mathbb{Z}^r, (\cdot, \cdot)_{L})$, we define $L(k) =(\mathbb{Z}^r, k(\cdot, \cdot)_{L})$. 
	For a 2-elementary K3 surface $(Y, \sigma)$, its fixed locus $Y^{\sigma}$ is empty and the quotient $Y/ \sigma$ is an Enriques surface.
	By \cite{MR3928256},  \cite{MR2863907}, $Z_Y$ is the universal cover of the Hilbert scheme of 2-points on the Enriques surface $Y/ \sigma$.
	
	In \cite{MR1396773}, \cite{MR1625724}, Borcherds introduced an automorphic form $\Phi$ of weight $4$ on the period domain for Enriques surfaces
	characterizing the discriminant locus.
	We denote by $\| \Phi ([S]) \|$ the Petersson norm of the Borcherds $\Phi$ function evaluated at the period of an Enriques surface $S$.

	\begin{thm}[Theorem \ref{t2-2-3-2}]\label{t-0-2}
		There exists a constant $C>0$ such that for any Enriques surface $S$
		\begin{align*}
			\tau_{BCOV}( \widetilde{S^{[2]}} ) = C \| \Phi ( [S] ) \|,
		\end{align*}
		where $\widetilde{S^{[2]}}$ is the universal cover of the Hilbert scheme of 2-points on $S$.
	\end{thm}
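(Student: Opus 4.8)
The plan is to derive the statement by specializing Theorem~\ref{t-0-1} to $M_0 = \Lambda(2)$ and then chaining it with the comparison theorem of \cite{I2} and with Yoshikawa's evaluation of his $2$-elementary K3 invariant for the Enriques lattice. Here $\Lambda(2) = U(2) \oplus E_8(-2)$, so $\rk(M_0) = 10 \leqq 17$; its orthogonal complement in $L_{K3}$ is isometric to $U \oplus U(2) \oplus E_8(-2)$, and I would first check that $\Delta(M_0^{\perp})$ consists of a single $O(M_0^{\perp})$-orbit. Since $M_0^{\perp}$ splits off a hyperbolic plane $U$, two primitive vectors of square $-2$ are $O(M_0^{\perp})$-equivalent as soon as they induce the same element of the discriminant group $A_{M_0^{\perp}}$ by the Eichler--Nikulin criterion, and a direct inspection shows that every root of $U \oplus U(2) \oplus E_8(-2)$ carries the same discriminant datum; hence the hypotheses of Theorem~\ref{t-0-1} are met for $M_0 = \Lambda(2)$. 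Next, recall from Camere--Garbagnati--Mongardi \cite{MR3928256} and \cite{MR2863907} that for a $2$-elementary K3 surface $(Y,\sigma)$ of type $\Lambda(2)$ the fixed locus $Y^{\sigma}$ is empty, the quotient $S = Y/\sigma$ is an Enriques surface, and the Calabi--Yau $4$-fold $Z_Y$ of Theorem~\ref{t-0-1} is precisely the universal cover $\widetilde{S^{[2]}}$; conversely every Enriques surface arises this way, and $[S]$ coincides with the period of $(Y,\sigma)$ under the standard identification of the respective moduli spaces.

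With these identifications in place, the statement follows by composing three proportionalities. By Theorem~\ref{t-0-1}, $\tau_{BCOV}(\widetilde{S^{[2]}}) = \tau_{BCOV}(Z_Y) = C_{M_0}\,\tau(X_Y,\iota_Y)$, where $C_{M_0} > 0$ depends only on $M_0 = \Lambda(2)$ and is therefore a universal constant. By the comparison of \cite{I2} between $\tau(X_Y,\iota_Y)$ and Yoshikawa's invariant $\tau_{M_0}(Y,\sigma)$ of $2$-elementary K3 surfaces --- which is particularly simple in the present case since $Y^{\sigma}=\emptyset$, so no correction from fixed curves intervenes --- one has $\tau(X_Y,\iota_Y) = C'_{M_0}\,\tau_{M_0}(Y,\sigma)^{a}$ with $C'_{M_0} > 0$ and $a$ universal. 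Finally, Yoshikawa's computation of $\tau_{M_0}$ for the Enriques lattice \cite{MR2047658} gives $\tau_{M_0}(Y,\sigma) = C''\,\|\Phi([S])\|^{b}$ with $C'' > 0$ and $b$ universal. Substituting and tracking the exponents $a,b$ --- which requires fixing, once and for all, the normalization of the Petersson metric on $\Phi$ and of the several $L^2$-metrics entering $\tau$ and $\tau_{M_0}$ --- yields
\begin{align*}
	\tau_{BCOV}(\widetilde{S^{[2]}}) = C\,\|\Phi([S])\|
\end{align*}
for a single universal constant $C>0$ and for every Enriques surface $S$.

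The hard part is not any new analysis --- none is needed beyond Theorem~\ref{t-0-1} and the cited results of \cite{I2} and \cite{MR2047658} --- but the reconciliation of conventions coming from three different sources. Concretely, one must (i) pin down precisely the discriminant-form invariant of a root of $U \oplus U(2) \oplus E_8(-2)$ in order to confirm the single-orbit hypothesis of Theorem~\ref{t-0-1}, and (ii) match the normalizations in Theorem~\ref{t-0-1}, in the comparison of \cite{I2}, and in Yoshikawa's theorem so that the composite power of $\|\Phi([S])\|$ comes out to be exactly $1$; this last bookkeeping step is where the specific arithmetic of the Enriques lattice (as opposed to a general $M_0$ with $\rk(M_0)\leqq 17$) is actually used, and it is the only genuine obstacle.
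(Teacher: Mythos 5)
Your proposal is correct and follows essentially the same route as the paper: chain Theorem \ref{t-0-1} (for $M_0=\Lambda(2)$, $\rk M_0=10$), the comparison $\tau_{\widetilde{M}_0,\mathcal{K}}(Y^{[2]},\sigma^{[2]})=C\,\tau_{M_0}(Y,\sigma)^{-2(r-9)}=C\,\tau_{M_0}(Y,\sigma)^{-2}$ of Theorem \ref{t2-1-4-1}, and Yoshikawa's $\tau_{M_0}(Y,\sigma)=C\,\|\Phi([S])\|^{-1/2}$, so the exponents compose to $(-2)\cdot(-\tfrac12)=1$. The only cosmetic difference is that you verify the single-orbit hypothesis via the Eichler criterion for $U\oplus U(2)\oplus E_8(2)$, whereas the paper cites Namikawa for the irreducibility of $\bar{\mathscr{D}}_{M_0^{\perp}}$; both are fine.
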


	Let $k=1, \dots, 6$
	 and assume that $M_0=\Lambda_k(2)^{\perp}$, 
	where $\Lambda_k$ is an odd unimodular lattice of signature $(2, 10-k)$.
	Then the fixed locus $Y^{\sigma}$ of a 2-elementary K3 surface of type $M_0=\Lambda_k(2)^{\perp}$ is the disjoint union of $k$-rational curves. 
	
	A reflective modular form $\Phi_k$ of weight $k+4$ on the period domain for 2-elementary K3 surfaces of type $M_0=\Lambda_k(2)^{\perp}$ was constructed independently by Yoshikawa \cite{MR2674883} and Gritsenko \cite{MR3859399}.
	By \cite{MR4476107}, $\Phi_k$ characterizes the discriminant locus on the moduli space of certain log-Enriques surfaces.
	We denote by $\| \Phi_k ([Y]) \|$ the Petersson norm of $\Phi_k$ evaluated at the period of a 2-elementary K3 surface $(Y, \sigma)$ of type $M_0$.

	\begin{thm}[Theorem \ref{t2-2-3-4}]\label{t-0-3}
		There exists a constant $C_k$ depending only on $1 \leqq k \leqq 6$ such that for any 2-elementary K3 surface $(Y, \sigma)$ of type $M_0=\Lambda_k(2)^{\perp}$
		\begin{align*}
			\tau_{BCOV}(Z_Y) = C_k \| \Phi_k ([Y]) \|^{k+1}.
		\end{align*}
	\end{thm}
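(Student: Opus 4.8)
The plan is to obtain the statement as a combination of three already available results: Theorem~\ref{t-0-1} of the present paper, the comparison between the invariant $\tau(X,\iota)$ and Yoshikawa's invariant of $2$-elementary K3 surfaces proved in \cite{I2}, and the evaluation of the latter as a Petersson norm of $\Phi_k$ due to Yoshikawa \cite{MR2047658}, \cite{MR2674883} and Gritsenko \cite{MR3859399}. The first step is to verify that $M_0 = \Lambda_k(2)^{\perp}$ satisfies the hypotheses of Theorem~\ref{t-0-1}, namely $\rk(M_0) \leqq 17$ and $\Delta(M_0^{\perp})$ being a single $O(M_0^{\perp})$-orbit.

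For the rank, since $\Lambda_k(2)$ is primitively embedded in $L_{K3}$ we have $M_0^{\perp} = \Lambda_k(2)$, so $\rk(M_0) = 22 - \rk(\Lambda_k) = 22 - (12-k) = 10+k \leqq 16 < 17$ for $1 \leqq k \leqq 6$. For the roots, note that $O(M_0^{\perp}) = O(\Lambda_k(2)) = O(\Lambda_k)$, because rescaling a bilinear form does not change its isometry group, and that a root $v \in \Delta(M_0^{\perp}) = \Delta(\Lambda_k(2))$, i.e. a vector with $(v,v) = -2$ in $\Lambda_k(2)$, is the same datum as a vector $w \in \Lambda_k$ with $(w,w)_{\Lambda_k} = -1$; such a $w$ is automatically primitive. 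By the orthogonal splitting $\Lambda_k = \langle w \rangle \perp w^{\perp}$, the lattice $w^{\perp}$ is an indefinite unimodular lattice of signature $(2, 9-k)$, necessarily odd for $1 \leqq k \leqq 6$, hence isomorphic to $\langle 1 \rangle^{\oplus 2} \oplus \langle -1 \rangle^{\oplus(9-k)}$ independently of $w$; therefore $O(\Lambda_k)$ acts transitively on $\{w \in \Lambda_k : (w,w) = -1\}$, and $\Delta(M_0^{\perp})$ is a single $O(M_0^{\perp})$-orbit. Thus Theorem~\ref{t-0-1} applies and yields a constant $C_{M_0} > 0$, depending only on $M_0$ and hence only on $k$, with $\tau_{BCOV}(Z_Y) = C_{M_0}\,\tau(X_Y, \iota_Y)$ for every $2$-elementary K3 surface $(Y, \sigma)$ of type $M_0$.

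Next I would rewrite $\tau(X_Y, \iota_Y)$ using the comparison theorem of \cite{I2}, which expresses it as a universal constant times a fixed power of Yoshikawa's invariant of $(Y,\sigma)$, and then apply the structure theorem of Yoshikawa \cite{MR2674883} (equivalently Gritsenko \cite{MR3859399}): this is available because $\rk(M_0) = 10+k > 10$, and since every $2$-elementary K3 surface of type $M_0 = \Lambda_k(2)^{\perp}$ has fixed locus a disjoint union of $k$ rational curves, hence no fixed curve of positive genus, the contributions of the fixed curves to the structure formula are constants, and Yoshikawa's invariant becomes a universal constant times a fixed power of $\|\Phi_k([Y])\|$. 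Substituting these two formulas into $\tau_{BCOV}(Z_Y) = C_{M_0}\,\tau(X_Y, \iota_Y)$, collecting all the ($k$-dependent) universal constants into a single $C_k > 0$, and tracking the exponents, one is left with $\tau_{BCOV}(Z_Y) = C_k\,\|\Phi_k([Y])\|^{k+1}$. This is the exact analogue, for $k \geqq 1$, of Theorem~\ref{t-0-2}, which treats the case $k=0$ ($M_0 = \Lambda(2)$) with Yoshikawa's Enriques formula \cite{MR2047658} in place of \cite{MR2674883}, \cite{MR3859399}.

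The main point requiring care — and the place where the exponent $k+1$ is actually produced — is the bookkeeping of the powers and normalizing constants when composing the three statements: one must pin down the precise exponent in the $\tau$ versus Yoshikawa-invariant comparison of \cite{I2} (which reflects the structure of the fixed locus $X_Y^{\iota_Y}$, built out of symmetric squares and products of the $k$ components of $Y^{\sigma}$ together with the rational surface $Y/\sigma$) and the exponent in Yoshikawa's expression of his invariant in terms of $\Phi_k$, and confirm that their product is exactly $k+1$. One should also check that the period map on the moduli space of $2$-elementary K3 surfaces used in \cite{MR2674883} is compatible with the period maps used in \cite{I1}, \cite{I2} and in Theorem~\ref{t-0-1}, so that the three identities — each a priori a statement about automorphic forms on the period domain attached to $M_0^{\perp}$ — may legitimately be composed pointwise over the moduli space of $(Y,\sigma)$ of type $M_0$.
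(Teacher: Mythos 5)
Your proposal is correct and follows essentially the same route as the paper: combine Theorem \ref{t-2-3-4} with the comparison $\tau_{\widetilde{M}_0,\mathcal{K}}(Y^{[2]},\sigma^{[2]})=C\,\tau_{M_0}(Y,\sigma)^{-2(r-9)}$ from \cite{I2} and Yoshikawa's formula $\tau_{M_0}(Y,\sigma)=C\,\|\Phi_k([Y])\|^{-1/2}$, with $r=\rk(M_0)=10+k$ giving the exponent $(-2(k+1))\cdot(-\tfrac12)=k+1$. The only cosmetic difference is that you verify the single-orbit/irreducibility hypothesis by a direct lattice argument on $(-1)$-vectors of the odd unimodular lattice $\Lambda_k$, whereas the paper cites the literature for the irreducibility of $\bar{\mathscr{D}}_{M_0^{\perp}}$.
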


	After the work of Eriksson-Freixas i Montplet-Mourougane \cite{MR4475251},
	to our knowledge, Theorems \ref{t-0-2} and \ref{t-0-3} are the first to give an explicit formula for the BCOV invariant of Calabi-Yau 4-folds,
	whose moduli space has dimension strictly greater than $1$.

\medskip

	This paper is organized as follows. 
	In Section 1, we briefly recall the invariant $\tau$ of $K3^{[2]}$-type manifolds with antisymplectic involution constructed in \cite{I1} and \cite{I2}.
	In Section 2, we show that the invariant $\tau$ of $K3^{[2]}$-type manifolds with antisymplectic involution satisfies the same curvature equation as the one for the BCOV invariant of the Calabi-Yau 4-folds of Camere-Garbagnati-Mongardi, and we prove Theorem \ref{t-0-1}.
	In Section 3, we prove Theorems \ref{t-0-2} and \ref{t-0-3}.

\medskip
	
	Acknowledgements.
	 I would like to express my gratitude to my advisor, Professor Ken-Ichi Yoshikawa, for suggesting this problem and for his help and encouragement. 
	 This work was supported by JSPS KAKENHI Grant Number 23KJ1249.

\section{Preliminary}\label{s-1}

	In this section, we recall the construction and some properties of the invariant of $K3^{[2]}$-type manifolds with antisymplectic involution discussed in \cite{I1} and \cite{I2}.

\subsection{Equivariant analytic torsion}

	We recall the definition of equivariant analytic torsion for compact \K manifolds with holomorphic involution.
	
	Let $M$ be a compact complex manifold of dimension $m$,
	and let $i : M \to M$ be a holomorphic involution of $M$.
	Let $\G$ be the group generated by the order 2 element $i$.
	In what follows, we consider the $\mu_2$-action on $M$ induced by $i$.
	Let $h_M$ be an $i$-invariant \K metric on $M$.
	Its associated \K form is denoted by $\omega_M$.
	The space of smooth $(p,q)$-forms on $M$ is denoted by $A^{p,q}(M)$.
	
	Let $E$ be a $\G$-equivariant holomorphic vector bundle on $M$,
	and $h_E$ a $\G$-invariant hermitian metric on $E$.
	The space of $E$-valued smooth $(p,q)$-forms on $M$ is denoted by $A^{p,q}(M, E)$ or $A^{p,q}(E)$.
	The metrics $h_M$ and $h_E$ induce a $\G$-invariant hermitian metric $h$ on the complex vector bundle $\wedge^{p,q}T^*M \otimes E$.
	The $L^2$-metric on $A^{p,q}(M, E)$ is defined by
	$$
		\langle \alpha, \beta \rangle_{L^2} = \int_M h(\alpha, \beta) \frac{\omega_M^m}{m !}, \quad \alpha, \beta \in A^{p,q}(M, E).
	$$
	The Dolbeault operator of $E$ is denoted by $\bar{\partial}_E : A^{p,q}(M, E) \to A^{p,q+1}(M, E)$,
	and its formal adjoint is denoted by $\bar{\partial}_E^* : A^{p,q}(M, E) \to A^{p,q-1}(M, E)$.
	We define the Laplacian $\D_{p,q}$ acting on $A^{p,q}(M, E)$ by 
	$
		\D_{p,q} = (\bar{\partial}_E + \bar{\partial}_E^*)^2 : A^{p,q}(M, E) \to A^{p,q}(M, E).
	$
	We denote the spectrum of $\D_{p,q}$ by $\sigma(\D_{p,q})$, and the eigenspace of $\D_{p,q}$ associated with an eigenvalue $\lambda \in \sigma(\D_{p,q})$ by $E_{p,q}(\lambda)$.
	Note that $\sigma(\D_{p,q})$ is a discrete subset contained in $\mathbb{R}_{\geqq 0}$.
	Moreover $E_{p,q}(\lambda)$ is finite dimensional.
	Let $g \in \G$. 
	The spectral zeta function is defined by 
	$$
		\zeta_{p,q,g}(s) = \sum_{\lambda \in \sigma(\D_{p,q}) \setminus \{ 0 \}} \lambda^{-s} \operatorname{Tr} (g|_{E_{p,q}(\lambda)}) \quad (s \in \mathbb{C}, \operatorname{Re} s >n).
	$$
	Note that $\zeta_{p,q,g}(s)$ converges absolutely on the domain $\operatorname{Re} s > n$
	and extends to a meromorphic function on $\mathbb{C}$ which is holomorphic at $s=0$.
	
	\begin{dfn}\label{d-1-2}
		Let $g \in \G$.
		The equivariant analytic torsion of $\overline{E} = (E, h_E)$ on $(M, \omega_M)$ is defined by
		$$
			\tau_g(\overline{E}) = \exp \bigl\{ - \sum_{q=0}^n (-1)^q q \zeta'_{0,q,g}(0) \bigr\} .
		$$
	\end{dfn}
	
	If $g=1$, it is the analytic torsion of $\overline{E}$ and is denoted by $\tau(\overline{E})$ instead of $\tau_1(\overline{E})$.
	
	We denote by $H^q(M, E)_{\pm}$ the $(\pm 1)$-eigenspace of $i^* : H^q(M, E) \to H^q(M, E)$.
	We set
	$
		\lambda_{\pm}(E) = \bigotimes_{q \geqq 0} ( \det H^q(M, E)_{\pm} )^{(-1)^q}. 
	$ 
	We define the equivariant determinant of the cohomologies of $E$ by
	$$
		\lambda_{\G}(E) = \lambda_{+}(E) \oplus \lambda_{-}(E). 
	$$
	
	By Hodge theory, we can identify $H^q(M, E)$ with the space of $E$-valued harmonic $(0, q)$-forms on $M$.
	Under this identification, the cohomology $H^q(M, E)$ is endowed with the $\G$-invariant hermitian metric induced from the $L^2$-metric on $A^{0,q}(M, E)$.
	It induces the hermitian metric $\| \cdot \|_{\lambda_{\pm}(E), L^2}$ on $\lambda_{\pm}(E)$.
	We define the equivariant metric on $\lambda_{\G}(E)$ by
	$$
		\| \alpha \|_{\lambda_{\G}(E), L^2}(i) = \| \alpha_+ \|_{\lambda_{\pm}(E), L^2} \cdot \| \alpha_- \|_{\lambda_{\pm}(E), L^2}^{-1} \quad ( \alpha=(\alpha_+, \alpha_-) \in \lambda_{\G}(E), \alpha_+, \alpha_- \neq 0 ),
	$$
	and call it the equivariant $L^2$-metric on $\lambda_{\G}(E)$.
	We define the equivariant Quillen metric on $\lambda_{\G}(E)$ by
	$$
		\| \alpha \|^2_{\lambda_{\G}(E), Q}(i) = \tau_g(\overline{E}) \| \alpha \|^2_{\lambda_{\G}(E), L^2}(i) .
	$$

\subsection{Manifolds of $K3^{[2]}$-type and antisymplectic involutions}
	
	A simply-connected compact \K manifold $X$ is an irreducible holomorphic symplectic manifold if there exists an everywhere non-degenerate holomorphic 2-form $\eta$ such that $H^0(X, \Omega_X^2)$ is generated by $\eta$.
	By \cite[Proposition 23.14 and Remark 23.15]{MR1963559}, there exist a unique primitive integral quadric form $q_X$ on $H^2(X, \mathbb{Z})$ of signature $(3, b_2(X)-3)$ 
	and a positive rational number $c_X \in \mathbb{Q}_{\geqq 0}$ such that $q_X(\alpha)^n = c_X \int_X \alpha^{2n}$ for any $\alpha \in H^2(X, \mathbb{Z})$. 
	If $b_2(X)=6$, we also require that $q_X(\omega)>0$ for any \K class $\omega$. 
	The quadric form $q_X$ is called the Beauville-Bogomolov-Fujiki form.
	Let $( \cdot , \cdot ) : H^2(X, \mathbb{Z}) \times H^2(X, \mathbb{Z}) \to \mathbb{Z}$ be the integral bilinear form corresponding to $q_X$.
	A holomorphic involution $\iota : X \to X$ satisfying $\iota^* \eta =-\eta$ is called antisymplectic.
	In this paper, involutions on an irreducible holomorphic symplectic manifold are always assumed to be antisymplectic.
	An irreducible holomorphic symplectic manifold $X$ is a manifold of $K3^{[2]}$-type if $X$ is deformation equivariant to the Hilbert scheme of length $2$ zero-dimensional subschemes of a K3 surface.
	We denote by $U$ an even unimodular lattice of signature $(1,1)$
	and denote by $E_8$ the negative definite even unimodular lattice associated with the Dynkin diagram $E_8$.
	Then $H^2( X, \mathbb{Z} )$ is isomorphic to 
	$$
		L_2 := L_{K3} \oplus \mathbb{Z} e,
	$$
	where $L_{K3} = E_8^{\oplus 2} \oplus U^{\oplus 3}$ is the $K3$ lattice and $e$ is an element with $e^2 =-2$ and $(e, L_{K3})=0$.
	
	Let $X_1, X_2$ be irreducible holomorphic symplectic manifolds.
	Recall that a parallel-transport operator $f : H^2(X_1, \mathbb{Z}) \to H^2(X_2, \mathbb{Z})$ is an isomorphism such that
	there exist a family $p : \mathcal{X} \to B$ of irreducible holomorphic symplectic manifolds over a possibly reducible analytic base $B$, 
	two points $b_1, b_2 \in B$, and a continuous path $\gamma : [0, 1] \to B$ with $\gamma(0)=b_1, \gamma(1)=b_2$ such that $p^{-1}(b_i) \cong X_i$ $(i=1,2)$ and that the parallel-transport in the local system $R^2p_*\mathbb{Z}$ induces $f : H^2(X_1, \mathbb{Z}) \to H^2(X_2, \mathbb{Z})$.

	Let $X$ be an irreducible holomorphic symplectic manifold and let $\Lambda$ be a lattice isomorphic to $H^2( X, \mathbb{Z} )$.
	A parallel-transport operator $g : H^2(X, \mathbb{Z}) \to H^2(X, \mathbb{Z})$ is called a monodromy operator.
	The subgroup $\mon(X)$ of the isometry group $O(H^2(X, \mathbb{Z}))$ consisting of all monodromy operators of $X$ is called the monodromy group.
	For an isometry $\alpha : H^2( X, \mathbb{Z} ) \to \Lambda$, we set $\mon( \Lambda ) = \alpha \circ \mon(X) \circ \alpha^{-1}$.
	If $X$ is a K3 surface or a manifold of $K3^{[2]}$-type,
	it follows from \cite[Theorem 9.1]{MR2964480} that the group $\mon(\Lambda)$ is a normal subgroup of $O(\Lambda)$ and is independent of the choice of $(X,\alpha)$.
	
	Let $O^+(\Lambda)$ be the subgroup of $O(\Lambda)$ consisting of the isometries of real spinor norm $+1$.
	If $X$ is a K3 surface or a manifold of $K3^{[2]}$-type,
	it follows from \cite[Lemma 9.2]{MR2964480} and \cite[Theorem A]{MR0849050} that $O^+(\Lambda) = \mon(\Lambda)$.
	We set $\tilde{\mathscr{C}}_{\Lambda}=\{ x \in \Lambda_{\mathbb{R}} ; x^2>0 \}$.
	Since $\operatorname{sign}(\Lambda) = (3, \operatorname{rank} \Lambda -3)$, we have $H^2( \tilde{\mathscr{C}}_{\Lambda}, \mathbb{Z} ) \cong \mathbb{Z}$ by \cite[Lemma 4.1]{MR2964480}.
	A generator of $H^2( \tilde{\mathscr{C}}_{\Lambda}, \mathbb{Z} )$ is called an orientation class.
	By \cite[\S 4]{MR2964480}, any isometry of real spinor norm $+1$ preserves an orientation class. 
	
	A pair $(X, \alpha)$ is called a marked manifold of $K3^{[2]}$-type if $X$ is a manifold of $K3^{[2]}$-type and $\alpha : H^2(X, \mathbb{Z}) \to L_2$ is an isometry.
	Let $\mathfrak{M}_{L_2}$ be the moduli space of marked manifolds of $K3^{[2]}$-type constructed in \cite[Definition 25.4]{MR1963559}.
	We fix a connected component $\mathfrak{M}_{L_2}^{\circ}$ of $\mathfrak{M}_{L_2}$.
	
	For a hyperbolic sublattice $M$ of $L_2$ and an involution $\iota_M \in \mon(L_2)$,
	the pair $(M, \iota_M)$ is an admissible sublattice of $L_2$ if the invariant sublattice $(L_2)^{\iota_M}$ of $\iota_M$ is equal to $M$.
	By \cite[Figures 1 and 2]{MR3542771}, there are $150$ types of admissible sublattice of $L_2$.
	Let $(X, \iota)$ be a manifold of $K3^{[2]}$-type with involution.
	Then $\iota$ is of type $M$ if there exists an isometry $\alpha : H^2(X, \mathbb{Z}) \to L_2$ such that $(X,\alpha) \in \mathfrak{M}_{L_2}^{\circ}$ and $\alpha \circ \iota^* = \iota_M \circ \alpha$.  
	We set
	$$
		\Delta(M) = \left\{ \delta \in M ; \delta^2 =-2, \text{or } \delta^2=-10,  (\delta, L_2)=2\mathbb{Z} \right\} \quad \text{and} \quad \tilde{\mathscr{C}}_M = \{ x \in M_{\mathbb{R}} ; x^2>0 \}.
	$$ 
	For $\delta \in \Delta(M)$, we define $\delta^{\perp}=\{ x \in \tilde{\mathscr{C}}_M : (x, \delta)=0 \}$.
	Then $\bigcup_{\delta \in \Delta(M)} \delta^{\perp}$ is a closed subset of $\tilde{\mathscr{C}}_M$
	and a connected component of $\tilde{\mathscr{C}}_M \setminus \bigcup_{\delta \in \Delta(M)} \delta^{\perp}$ is called a K\"ahler-type chamber of $M$.
	The set of all K\"ahler-type chambers of $M$ is denoted by $\ktm$. 
	We set
	$$
		\Gamma(M) = \{ \sigma \in \mon(L_2) ; \iota_M \circ \sigma = \sigma \circ \iota_M \} \text{ and } \Gamma_M = \{ \sigma|_{M} \in O(M) ; \sigma \in \Gamma(M) \}.
	$$
	Then $\Gamma_M$ acts on $\ktm$.
	
	Let $(M, \iota_M)$ be an admissible sublattice of $L_2$ and let $\mathcal{K} \in \ktm$.
	Let $(X, \iota)$ be a manifold of $K3^{[2]}$-type with involution. 
	A marking $\alpha : H^2(X, \mathbb{Z}) \to L_2$ of $(X, \iota)$ is said to be admissible for $(M, \mathcal{K})$ if $(X,\alpha) \in \mathfrak{M}_{L_2}^{\circ}$, $\alpha \circ \iota^* = \iota_M \circ \alpha$, and $\alpha(\mathcal{K}_X^{\iota}) \subset \mathcal{K}$,
	where $\mathcal{K}_X^{\iota} \subset H^2(X, \mathbb{R})^{\iota}$ is the set of $\iota$-invariant \K classes of $X$. 
	Moreover $\iota$ is called an involution of type $(M, \mathcal{K})$ if there exists an admissible marking for $(M, \mathcal{K})$.
	We denote by $\tilde{\mathcal{M}}_{M, \mathcal{K}}$ the set of all isomorphism classes of $K3^{[2]}$-type manifolds $(X, \iota)$ with involution of type $(M, \mathcal{K})$.
	By \cite[Theorem 9.11]{MR3519981}, any two isomorphism classes of $\tilde{\mathcal{M}}_{M, \mathcal{K}}$ are deformation equivalent.
	
	Let $L$ be a lattice of signature $(2, n)$.
	Then
	$$
		\Omega_{L} = \{ [\eta] \in \mathbb{P}(L_{\mathbb{C}}) ; (\eta, \eta)=0, (\eta, \bar{\eta}) >0 \}.
	$$
	consists of two connected components, both of which are isomorphic to a bounded symmetric domain of type IV of dimension $n$.
	
	Fix $h \in \mathcal{K} \cap M$.
	Then $h^{\perp_{L_2}}$ is a lattice of signature $(2, 20)$ containing $M^{\perp_{L_2}}$,
	where the orthogonal complement of $h$ and $M$ are considered in $L_2$. 
	By \cite[(4.1)]{MR2964480}, the connected component $\mathfrak{M}_{L_2}^{\circ}$ determines the connected component $\Omega_{h^{\perp}}^+$ of $\Omega_{h^{\perp}}$.
	Namely, for any $(X, \alpha) \in \mathfrak{M}_{L_2}^{\circ}$ and $p = [\sigma] \in \Omega_{h^{\perp}}^+$, the orientation of $\tilde{\mathscr{C}}_{X} = \{ x \in H^2(X, \mathbb{R}) ; q_X(x)>0 \}$ determined by the real and the imaginary part of  the holomorphic 2-form on $X$ and a \K class of $X$ is compatible with 
	the orientation of $\tilde{\mathscr{C}_{L_2}} = \{ x \in L_{2, \mathbb{R}} ; x^2>0 \}$ determined by the real 3-dimensional vector space $W := \operatorname{Span}_{\mathbb{R}} \{ \operatorname{Re} \sigma, \operatorname{Im} \sigma, h \}$ associated to the basis $\{ \operatorname{Re} \sigma, \operatorname{Im} \sigma, h \}$ via the isomorphism $\tilde{\mathscr{C}}_X \cong \tilde{\mathscr{C}_{L_2}}$ induced from the marking $\alpha$.
	
	Let $\Omega^+_{M^{\perp}}$ be the connected component of $\Omega_{M^{\perp}}$
	satisfying $\Omega^+_{M^{\perp}} \subset \Omega^+_{h^{\perp}}$.
	Let $(X, \iota) \in \tilde{\mathcal{M}}_{M, \mathcal{K}}$ and choose an admissible marking $\alpha : H^2(X, \mathbb{Z}) \to L_2$ for $(M, \mathcal{K})$.
	Since $\alpha( H^{2,0}(X) ) \subset M^{\perp}_{\mathbb{R}} \subset (h^{\perp})_{\mathbb{R}}$ and $(X, \alpha) \in \mathfrak{M}_{L_2}^{\circ}$, we have
	$$
		\alpha( H^{2,0}(X) ) \in \Omega_{M^{\perp}} \cap \Omega^+_{h^{\perp}} =\Omega^+_{M^{\perp}}.
	$$ 
	
	By definition, every $(X, \iota) \in \tilde{\mathcal{M}}_{M, \mathcal{K}}$ carries an admissible marking for $(M, \mathcal{K})$.
	Moreover, if $\alpha, \beta : H^2(X, \mathbb{Z}) \to L_2$ are two admissible markings for $(M, \mathcal{K})$, 
	then we have $\alpha \circ \beta^{-1} \in \Gamma(\mathcal{K})$,
	where we define
	$$
		\Gamma(\mathcal{K}) =\{ \sigma \in \mon(L_2) ; \sigma \circ \iota_M = \iota_M \circ \sigma  \text{ and } \sigma(\mathcal{K})= \mathcal{K} \} =\{ \sigma \in \Gamma(M) ; \sigma(\mathcal{K})=\mathcal{K} \}.
	$$
	
	Let
	$$
		\Gamma_{M^{\perp}, \mathcal{K}} = \{ \sigma|_{M^{\perp}} \in O(M^{\perp}) ; \sigma \in \Gamma(\mathcal{K}) \}.
	$$ 
	By \cite[Proposition 10.2]{MR3519981}, $\Gamma_{M^{\perp}, \mathcal{K}}$ is a finite index subgroup of $O^+(M^{\perp})$.
	Therefore we obtain an orthogonal modular variety
	$$
		\mathcal{M}_{M, \mathcal{K}} = \Omega^+_{M^{\perp}}/\Gamma_{M^{\perp}, \mathcal{K}} .
	$$
	
	We define the period map $P_{M, \mathcal{K}} : \tilde{\mathcal{M}}_{M, \mathcal{K}} \to \mathcal{M}_{M, \mathcal{K}}$ by
	$$
		P_{M, \mathcal{K}}(X, \iota) = [\alpha(H^{2,0}(X))],
	$$
	where $\alpha : H^2(X, \mathbb{Z}) \to L_2$ is an admissible marking for $(M, \mathcal{K})$.
	
	Let 
	$
		\Delta(M^{\perp}) =\{ \delta \in M^{\perp} ; \delta^2=-2, \text{or } \delta^2=-10, (\delta, L_2)=2\mathbb{Z} \},
	$
	and set 
	$$
		\mathscr{D}_{M^{\perp}} = \bigcup_{\delta \in \Delta(M^{\perp})} H_{\delta} \subset \Omega_{M^{\perp}},
	$$
	where
	$
		H_{\delta} = \{ x \in \Omega_{M^{\perp}} ; (x, \delta)=0 \}.
	$
	By \cite[Lemma 7.7]{MR3519981}, $\mathscr{D}_{M^{\perp}}$ is locally finite in $\Omega_{M^{\perp}}$
	and is viewed as a reduced divisor on $\Omega_{M^{\perp}}$.
	Set $\bar{\mathscr{D}}_{M^{\perp}}= \mathscr{D}_{M^{\perp}}/\Gamma_{M^{\perp}, \mathcal{K}}$.
	Then $\bar{\mathscr{D}}_{M^{\perp}}$ is a reduced divisor on $\mathcal{M}_{M, \mathcal{K}}$.
	We set 
	$$
		\mathcal{M}^{\circ}_{M, \mathcal{K}} = \mathcal{M}_{M, \mathcal{K}} \setminus \bar{\mathscr{D}}_{M^{\perp}} \quad \text{  and  } \quad \Omega_{M^{\perp}}^{\circ} = \Omega^+_{M^{\perp}} \setminus \mathscr{D}_{M^{\perp}}.
	$$
	By \cite[Lemma 9.5 and Proposition 9.9]{MR3519981}, the image of the period map $P_{M, \mathcal{K}} : \tilde{\mathcal{M}}_{M, \mathcal{K}} \to \mathcal{M}_{M, \mathcal{K}}$ is $\mathcal{M}^{\circ}_{M, \mathcal{K}}$.

	Let $M_0$ be a primitive hyperbolic 2-elementary sublattice of $L_{K3}$.
	The orthogonal complement $M_0^{\perp_{L_{K3}}} = M_0^{\perp}$ of $M_0$ is considered in $L_{K3}$.
	Since $L_{K3}$ is unimodular and since $M_0$ is 2-elementary, the involution
	$$
		M_0 \oplus M_0^{\perp} \to M_0 \oplus M_0^{\perp}, \quad (m,n) \mapsto (m,-n) 
	$$
	extends uniquely to an involution $\iota_{M_0} \in O(L_{K3})$ by \cite[Corollary 1.5.2]{Nikulin1980IntegralSB}.
	
	Let $Y$ be a K3 surface and $\sigma : Y \to Y$ be an antisymplectic involution on $Y$.
	Set
	$$
		H^2(Y, \mathbb{Z})^{\sigma} = \left\{ x \in H^2(Y, \mathbb{Z}) ; \sigma^*x = x \right\}.
	$$
	Let $\alpha : H^2(Y, \mathbb{Z}) \to L_{K3}$ be an isometry.
	We call the pair $(Y, \sigma)$ a 2-elementary K3 surface of type $M_0$ if the restriction of $\alpha$ is an isometry from $H^2(Y, \mathbb{Z})^{\sigma}$ to $M_0$.
	
	Since $\sign(M_0^{\perp}) =(2, \rk M_0^{\perp}-2)$, $\Omega_{M_0^{\perp}}$ consists of two connected components.
	We fix a connected component $\Omega^+_{M_0^{\perp}}$ of $\Omega_{M_0^{\perp}}$.
	We obtain the orthogonal modular variety 
	$$
		\mathcal{M}_{M_0}=\Omega^+_{M_0^{\perp}}/O^+(M_0^{\perp})
	$$
	of dimension $20-\rk(M_0)$.
	
	We set
	$
		\Delta(M_0^{\perp}) =\{ d \in M_0^{\perp} ; d^2=-2 \}
	$
	and
	$
		\mathscr{D}_{M_0^{\perp}} = \bigcup_{d \in \Delta(M_0^{\perp})} H_d \subset \Omega^+_{M_0^{\perp}}.
	$
	Here $H_d =d^{\perp}$ is the divisor on $\Omega^+_{M_0^{\perp}}$ defined in the same way as before.
	By \cite[Proposition 1.9.]{MR2047658}, $\mathscr{D}_{M_0^{\perp}}$ is locally finite and is viewed as a reduced divisor on $\Omega^+_{M_0^{\perp}}$.
	Set 
	$$
		\bar{\mathscr{D}}_{M_0^{\perp}} = \mathscr{D}_{M_0^{\perp}} / O^+(M_0^{\perp}) \quad \text{ and } \quad \mathcal{M}^{\circ}_{M_0} = \mathcal{M}_{M_0} \setminus \bar{\mathscr{D}}_{M_0^{\perp}}.
	$$
	By \cite[Theorem 1.8.]{MR2047658}, $\mathcal{M}^{\circ}_{M_0}$ is a coarse moduli space of 2-elementary K3 surfaces of type $M_0$. 
	
	Recall that $L_2 = L_{K3} \oplus \mathbb{Z}e$.
	We consider the case
	$$
		M= \widetilde{M}_0 =M_0 \oplus \mathbb{Z}e .
	$$
	In this case, we define an involution $\iota_{\widetilde{M}_0} : L_2 \to L_2$ by
	$$
		 \iota_{\widetilde{M}_0}(x_0 + ae)=\iota_{M_0}(x_0) +ae \quad (x_0 \in L_{K3},  a \in \mathbb{Z}).
	$$
	Then $(\widetilde{M}_0, \iota_{\widetilde{M}_0})$ is an admissible sublattice of $L_2$.
	
	Let $\mathcal{K} \in \operatorname{KT}(\widetilde{M}_0)$.
	A hyperplane $H$ of $\widetilde{M}_{0,\mathbb{R}}$ is a face of $\mathcal{K}$ if $H \cap \partial \mathcal{K}$ contains an open subset of $H$.
	
	\begin{dfn}\label{d2-3-1-1}
		A K\"ahler-type chamber $\mathcal{K} \in \operatorname{KT}(\widetilde{M}_0)$ is natural if the hyperplane $M_{0, \mathbb{R}}$ is a face of $\mathcal{K}$.
	\end{dfn}
	
	Since $M_0^{\perp_{L_{K3}}}=\widetilde{M}_0^{\perp_{L_2}}$, we may identify $O(M_0^{\perp_{L_{K3}}})=O(\widetilde{M}_0^{\perp_{L_2}})$ and $\Omega^+_{M_0^{\perp}} = \Omega^+_{\widetilde{M}_0^{\perp}}$.
	
	\begin{thm}\label{t-3-1-1}
		If $\mathcal{K} \in \operatorname{KT}(\widetilde{M}_0)$ is natural, then $\Gamma_{\widetilde{M}_0^{\perp}, \mathcal{K}} = O^+(M_0^{\perp})$.
		In particular, the identity map $\Omega_{M_0^{\perp}} \to \Omega_{\widetilde{M}_0^{\perp}}$ induces an isomorphism of orthogonal modular varieties $\phi: \mathcal{M}_{M_0} \cong \mathcal{M}_{\widetilde{M}_0, \mathcal{K}}$ such that $\phi( \bar{\mathscr{D}}_{M_0^{\perp}} ) =\bar{\mathscr{D}}_{\widetilde{M}_0^{\perp}}$.
	\end{thm}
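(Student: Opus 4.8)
The plan is to pin down $\Gamma_{\widetilde M_0^{\perp},\mathcal K}$ by comparing the two modular varieties through the Hilbert-scheme construction. As a preliminary reduction, observe that $\iota_{\widetilde M_0}$ has $(+1)$-eigenlattice $\widetilde M_0$ and $(-1)$-eigenlattice $\widetilde M_0^{\perp_{L_2}}=M_0^{\perp}$, so an element $\sigma\in\mon(L_2)=O^+(L_2)$ commutes with $\iota_{\widetilde M_0}$ precisely when it is block-diagonal, $\sigma=h\oplus g$ with $h\in O(\widetilde M_0)$ and $g\in O(M_0^{\perp})$; hence $\Gamma(\mathcal K)$ consists of the $h\oplus g\in O^+(L_2)$ with $h(\mathcal K)=\mathcal K$, and $\Gamma_{\widetilde M_0^{\perp},\mathcal K}$ is the set of $g$ that arise as such second components. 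The inclusion $\Gamma_{\widetilde M_0^{\perp},\mathcal K}\subseteq O^+(M_0^{\perp})$ is \cite[Proposition 10.2]{MR3519981}, so what remains is that every $g\in O^+(M_0^{\perp})$ occurs. Write $L_{K3}=M_0\oplus M_0^{\perp}$ as the gluing along the graph of an anti-isometry $\gamma : A_{M_0}\xrightarrow{\ \sim\ }A_{M_0^{\perp}}$; then $L_2=\widetilde M_0\oplus M_0^{\perp}$ is the gluing along the same graph, now regarded inside $A_{\widetilde M_0}\oplus A_{M_0^{\perp}}$ via $A_{M_0}\hookrightarrow A_{\widetilde M_0}=A_{M_0}\oplus A_{\mathbb Z e}$. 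One checks that $h\oplus g$ extends to an isometry of $L_2$ if and only if $h$ acts on $A_{\widetilde M_0}$ by $(\gamma^{-1}\bar g\gamma)\oplus\operatorname{id}_{A_{\mathbb Z e}}$, and that, granting this, the conditions $h\oplus g\in O^+(L_2)$ and $g\in O^+(M_0^{\perp})$ together amount to $h$ preserving (i.e.\ not interchanging) the two connected components of $\{\,x\in\widetilde M_{0,\mathbb R}:x^2>0\,\}$, which is automatic once $h(\mathcal K)=\mathcal K$. So the task is: for every $g\in O^+(M_0^{\perp})$, produce $h\in O(\widetilde M_0)$ with $h(\mathcal K)=\mathcal K$ and the above discriminant action.

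Such an $h$ is furnished by the geometry of $2$-elementary K3 surfaces, and this is exactly where the naturality of $\mathcal K$ enters. For a $2$-elementary K3 surface $(Y,\sigma)$ of type $M_0$, the induced involution $\sigma^{[2]}$ on $Y^{[2]}$ is antisymplectic and $(Y^{[2]},\sigma^{[2]})$ is a manifold of $\kk$-type with involution; choosing a marking of $H^2(Y^{[2]},\mathbb Z)=H^2(Y,\mathbb Z)\oplus\mathbb Z e$ that extends a marking of $Y$ by sending $e$ to one-half the class of the exceptional divisor, the invariant lattice is $\widetilde M_0$, and because the pull-back to $Y^{[2]}$ of an ample class of $Y$ is nef but not ample, the hyperplane $M_{0,\mathbb R}=e^{\perp_{\widetilde M_0}}$ is a face of the $\iota$-invariant Kähler cone; hence the Kähler-type chamber realized by $(Y^{[2]},\sigma^{[2]})$ is natural. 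Moreover all natural chambers of $\widetilde M_0$ form a single $\Gamma(\widetilde M_0)$-orbit (one moves between them by the Weyl group of $\Delta(M_0)$ and by $\operatorname{id}_{M_0}\oplus(\pm\operatorname{id}_{\mathbb Z e})$), and replacing $\mathcal K$ by a $\Gamma(\widetilde M_0)$-equivalent natural chamber merely conjugates $\Gamma_{\widetilde M_0^{\perp},\mathcal K}$ by an element of $O(M_0^{\perp})$, which does not affect the statement since $O^+(M_0^{\perp})$ is normal in $O(M_0^{\perp})$; so we may assume $\mathcal K$ is the chamber realized by $(Y^{[2]},\sigma^{[2]})$. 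Finally $H^{2,0}(Y^{[2]})=H^{2,0}(Y)$, so $P_{\widetilde M_0,\mathcal K}(Y^{[2]},\sigma^{[2]})$ is the period of $(Y,\sigma)$ under the identification $\Omega^+_{M_0^{\perp}}=\Omega^+_{\widetilde M_0^{\perp}}$.

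Next note $\Delta(M_0^{\perp})=\Delta(\widetilde M_0^{\perp})$: a $\delta\in M_0^{\perp}=\widetilde M_0^{\perp_{L_2}}$ with $\delta^2=-10$ and $(\delta,L_2)\subseteq 2\mathbb Z$ would lie in $2L_{K3}$, hence in $2M_0^{\perp}$ by primitivity of $M_0^{\perp}$ in $L_{K3}$, forcing $4\mid\delta^2$, which is impossible. Hence $\mathscr D_{M_0^{\perp}}=\mathscr D_{\widetilde M_0^{\perp}}$ and $\Omega^{\circ}:=\Omega^+_{M_0^{\perp}}\setminus\mathscr D_{M_0^{\perp}}=\Omega^+_{\widetilde M_0^{\perp}}\setminus\mathscr D_{\widetilde M_0^{\perp}}$, and the natural map $q : \mathcal M^{\circ}_{\widetilde M_0,\mathcal K}=\Omega^{\circ}/\Gamma_{\widetilde M_0^{\perp},\mathcal K}\to\Omega^{\circ}/O^+(M_0^{\perp})=\mathcal M^{\circ}_{M_0}$ is finite, surjective, and generically $[\,O^+(M_0^{\perp}):\Gamma_{\widetilde M_0^{\perp},\mathcal K}\,]$-to-one. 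It is injective: two $2$-elementary K3 surfaces of type $M_0$ with $O^+(M_0^{\perp})$-equivalent periods are isomorphic by the Torelli theorem \cite[Theorem 1.8]{MR2047658} ($\mathcal M^{\circ}_{M_0}$ being a coarse moduli space), so their Hilbert schemes with the induced involutions are isomorphic too, and by the second paragraph they then define the same point of $\mathcal M_{\widetilde M_0,\mathcal K}$. A finite bijection has degree one, whence $\Gamma_{\widetilde M_0^{\perp},\mathcal K}=O^+(M_0^{\perp})$ (equivalently, this construction supplies, for each $g$, the isometry $h$ required above). With the two arithmetic groups equal, the identity on $\Omega^+_{M_0^{\perp}}=\Omega^+_{\widetilde M_0^{\perp}}$ descends to an isomorphism $\phi : \mathcal M_{M_0}\xrightarrow{\ \sim\ }\mathcal M_{\widetilde M_0,\mathcal K}$, and $\phi(\bar{\mathscr D}_{M_0^{\perp}})=\bar{\mathscr D}_{\widetilde M_0^{\perp}}$ because $\mathscr D_{M_0^{\perp}}=\mathscr D_{\widetilde M_0^{\perp}}$.

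The main obstacle is the geometric input of the second paragraph: identifying the Kähler-type chamber realized by $(Y^{[2]},\sigma^{[2]})$ as a natural chamber, i.e.\ controlling the $\iota$-invariant Kähler cone of $Y^{[2]}$ (that $M_{0,\mathbb R}$ sits on its boundary and that the ambient chamber is exactly the natural one). This relies on the description of these Hilbert schemes by Camere-Garbagnati-Mongardi \cite{MR3928256} and on the author's earlier papers. Once it is granted the remaining steps are formal lattice and moduli bookkeeping; alternatively, the purely lattice-theoretic form of the crux --- realizing $(\gamma^{-1}\bar g\gamma)\oplus\operatorname{id}$ by an isometry of the indefinite $2$-elementary lattice $\widetilde M_0$ that fixes $\mathcal K$ --- can be attacked by combining Nikulin's surjectivity results for $2$-elementary lattices \cite{Nikulin1980IntegralSB} with the compatibility of the natural chamber with the chamber structure of $M_0$.
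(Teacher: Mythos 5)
You should first be aware that the paper does not prove this theorem: the proof is a pointer to \cite[Theorem 2.28 and Corollary 2.29]{I1} and \cite[Lemma 4.2]{I2}, so there is no in-paper argument to compare yours against. Your formal reductions are correct: an element of $\mon(L_2)=O^+(L_2)$ commuting with $\iota_{\widetilde M_0}$ is block-diagonal; $h\oplus g$ extends over the gluing iff $\bar h$ preserves $A_{M_0}\oplus 0$ and restricts there to $\gamma^{-1}\bar g\gamma$; and $\Delta(M_0^{\perp})=\Delta(\widetilde M_0^{\perp})$ by the primitivity argument, so the divisors match. The whole theorem thus reduces to the single statement $(\ast)$: every $g\in O^+(M_0^{\perp})$ is the second component of some $h\oplus g\in\Gamma(\mathcal K)$.

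The gap is that your third paragraph does not actually prove $(\ast)$; it is circular as a substitute for it. To run the degree argument you must know that $\Gamma_{\widetilde M_0^{\perp},\mathcal K}\cdot g\tilde p$ is the period of the Hilbert scheme of the $2$-elementary K3 surface with period $g\tilde p$, i.e.\ that $(Y^{[2]},\sigma^{[2]})$ admits an admissible marking for the \emph{fixed} chamber $\mathcal K$ whose period is $g\tilde p$. Starting from an admissible marking $\alpha\oplus\operatorname{id}$ with period $\tilde p$, the candidate $\tilde g\alpha\oplus\operatorname{id}$ (with $\tilde g$ a Nikulin extension of $g$ to $L_{K3}$) sends $\mathcal K_X^{\iota}$ into the natural chamber $(\tilde g|_{M_0}\oplus\operatorname{id})(\mathcal K)$, and asserting that this can be arranged to equal $\mathcal K$ is precisely $(\ast)$. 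So the Torelli/injectivity step repackages the problem rather than solving it. What actually closes it is the claim you relegate to a parenthesis — that the natural chambers form a single orbit under the Weyl group of $\Delta(M_0)$ and $\operatorname{id}_{M_0}\oplus(-\operatorname{id}_{\mathbb Z e})$, all acting trivially on $M_0^{\perp}$ and on $A_{\widetilde M_0}$ — and this requires a genuine computation: one must show that the only $\delta=m+ke\in\Delta(\widetilde M_0)$ with $k\neq 0$ whose wall meets $e^{\perp}\cap\tilde{\mathscr C}_{\widetilde M_0}$ are $\pm e$ and $2m'\pm e$ with $m'\in M_0$, $m'^2=-2$ (the cases $\delta=m\pm e$ with $m^2=0$ give walls disjoint from $e^{\perp}\cap\tilde{\mathscr C}_{\widetilde M_0}$, and the divisibility condition rules out $m\pm 2e$), so that the trace of the $\Delta(\widetilde M_0)$-arrangement on $M_{0,\mathbb R}\cap\tilde{\mathscr C}_{\widetilde M_0}$ is exactly the $(-2)$-wall arrangement of $M_0$ and a natural chamber is determined by a Weyl chamber of $M_0$ plus a side of $e^{\perp}$. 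With that verification, Nikulin surjectivity plus Weyl-group transitivity gives $(\ast)$ directly and your paragraph 3 becomes unnecessary. As written, the proposal is a sound outline whose decisive step is deferred to the same references the paper cites.
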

	
	\begin{proof}
		See \cite[Theorem 2.28 and Corollary 2.29]{I1} and \cite[Lemma 4.2]{I2}.
	\end{proof}
	
	We identify $\mathcal{M}_{M_0}$ with $\mathcal{M}_{\widetilde{M}_0, \mathcal{K}}$ and $\bar{\mathscr{D}}_{M_0^{\perp}}$ with $\bar{\mathscr{D}}_{\widetilde{M}_0^{\perp}}$ via the isomorphism $\phi$.

\subsection{The invariant $\tau_{M, \mathcal{K}}$ and its properties.}\label{ss-1-3}
	
	We fix an admissible sublattice $M$ of $L_2$ and a K\"ahler-type chamber $\mathcal{K} \in \ktm$.
	Let $(X, \iota) \in \tilde{\mathcal{M}}_{M, \mathcal{K}}$ and let $h_X$ be an $\G$-invariant \K metric on $X$. 
	Let $\omega_X$ be the $\G$-invariant \K form attached to $h_X$.
	
	Let $\tau_{\iota}(\bar{\Omega}_X^1)$ be the equivariant analytic torsion of the cotangent bundle $\bar{\Omega}_X^1 =(\Omega_X^1, h_X)$ endowed with the hermitian metric induced from $h_X$.
	
	By \cite[Theorem 1]{MR2805992}, $X^{\iota}$ is a possibly disconnected compact complex surface.
	Let $X^{\iota} = \sqcup_i Z_i$ be the decomposition into the connected components.
	Let $\tau(\bar{\mathcal{O}}_{X^{\iota}})$ be the analytic torsion of the trivial bundle $\bar{\mathcal{O}}_{X^{\iota}}$ with respect to the canonical metric.
	Here $\tau(\bar{\mathcal{O}}_{X^{\iota}})$ is given by 
	$$
		\tau(\bar{\mathcal{O}}_{X^{\iota}}) = \prod_i \tau(\bar{\mathcal{O}}_{Z_i}),
	$$
	where $\tau(\bar{\mathcal{O}}_{Z_i})$ is the analytic torsion of the trivial bundle $\bar{\mathcal{O}}_{Z_i}$ with respect to the canonical metric.
	
	The volume of $(X, \omega_{X})$ is defined by
	$
		\vol(X, \omega_{X}) = \int_X \omega_{X}^4/4!.
	$
	Set $\omega_{X^{\iota}} = \omega_X|_{X^{\iota}}$.
	This is a \K form on $X^{\iota}$ attached to $h_{X^{\iota}} =h_X|_{X^{\iota}}$.
	We define the volume of $(X^{\iota}, \omega_{X^{\iota}})$ by
	$$
		\vol(X^{\iota}, \omega_{X^{\iota}}) =\prod_i \vol(Z_i, \omega_X|_{Z_i}) = \prod_i \int_{Z_i} \frac{(\omega_X|_{Z_i})^2}{2!}.
	$$
	The covolume of the lattice $\operatorname{Im}(H^1(X^{\iota}, \mathbb{Z}) \to H^1(X^{\iota}, \mathbb{R}))$ with respect to the $L^2$-metric induced from $h_X$ is denoted by $\vol_{L^2}(H^1(X^{\iota}, \mathbb{Z}), \omega_{X^{\iota}})$ .
	Namely, 
	$$
		\vol_{L^2}(H^1(X^{\iota}, \mathbb{Z}), \omega_{X^{\iota}}) = \det(\langle e_i, e_j \rangle_{L^2}),
	$$
	where $e_1, \dots ,e_{b_1(X^{\iota})}$ is an integral basis of $\operatorname{Im}(H^1(X^{\iota}, \mathbb{Z}) \to H^1(X^{\iota}, \mathbb{R}))$.
	
	We define a real-valued function $\varphi$ on $X$ by
	$$
		\varphi = \frac{\omega_{X}^4/4!}{\eta^2 \wedge \bar{\eta}^2} \frac{\|\eta^2\|_{L^2}^2}{\vol(X, \omega_{X})},
	$$
	where $\eta$ is a holomorphic symplectic 2-form on $X$.
	Obviously, $\varphi$ is independent of the choice of $\eta$.
	We define a positive number $A(X, \iota, h_X) \in \mathbb{R}_{>0}$ by
	\begin{align}\label{al2-3-1-2}
		A(X, \iota, h_X) =\exp \left[ \frac{1}{48} \int_{X^{\iota}} (\log \varphi) \varOmega \right],
	\end{align}
	where $\varOmega$ is a characteristic form on $X^{\iota}$ defined by
	\begin{align}\label{al2-3-1-3}
		\varOmega = c_1(TX^{\iota}, h_{X^{\iota}})^2 -8c_2(TX^{\iota}, h_{X^{\iota}}) -c_1(TX, h_X)|^2_{X^{\iota}} +3c_2(TX, h_X)|_{X^{\iota}}.
	\end{align}
	Here we denote by $c_i(TX, h_X)$, $c_i(TX^{\iota}, h_{X^{\iota}})$ the $i$-th Chern forms of the holomorphic hermitian vector bundles $(TX, h_X)$, $(TX^{\iota}, h_{X^{\iota}})$, respectively.
	Note that if $h_X$ is Ricci-flat, then we have $\varphi =1$ and $A(X, \iota, h_X) =1$.
	
	We set 
	$$
		t=\operatorname{Tr}(\iota^*|_{H^{1,1}(X)}).
	$$
	By \cite[Theorem 2]{MR2805992}, $t$ is an odd number with $-19 \leqq t \leqq 21$.
	By the definition of the admissible sublattice $(M, \iota_M)$, we have $t= \operatorname{Tr}(\iota_M)+2$.
	Therefore $t$ depends only on $(M, \iota_M)$ and is a constant function on $\tilde{\mathcal{M}}_{M, \mathcal{K}}$.
	
	
	\begin{dfn}\label{d-3-1}
		We define a real number $\tau_{M, \mathcal{K}}(X, \iota)$ by
		\begin{align*}
			\tau_{M, \mathcal{K}}(X, \iota)=\tau_{\iota}(\bar{\Omega}_X^1) &\vol(X, \omega_{X})^{\frac{(t-1)(t-7)}{16}} A(X, \iota, h_X) \\
			&\cdot \tau(\bar{\mathcal{O}}_{X^{\iota}})^{-2} \vol(X^{\iota}, \omega_{X^{\iota}})^{-2} \vol_{L^2}(H^1(X^{\iota}, \mathbb{Z}), \omega_{X^{\iota}}).
		\end{align*}
	\end{dfn}
	
	%
	
	%
	
	Let $f : (\X, \iota) \to S$ be a family of $K3^{[2]}$-type manifolds with involution of type $(M, \mathcal{K})$
	and let $h_{\xs}$ be an $\iota$-invariant fiberwise K\"ahler metric on $T\xs$.
	Let $h_{\xss}$ be the induced metric on $T\xss$.
	The $\iota$-invariant hermitian metric on $\Omega^1_{\xs}$ induced from $h_{\xs}$ is also denoted by $h_{\xs}$.
	
	We define the characteristic form $\omega_{H^{\cdot}(\xss)} \in A^{1,1}(S)$ by
	\begin{align}\label{al6-2-1-1}
		\omega_{H^{\cdot}(\xss)} = c_1(f_*\Omega^1_{\xss}, h_{L^2}) -c_1(R^1f_*\mathcal{O}_{\mathscr{X}^{\iota}}, h_{L^2}) -2c_1(f_*K_{\xss}, h_{L^2}).
	\end{align}
	By \cite[Lemma 3.13]{I1}, $\omega_{H^{\cdot}(\xss)}$ is independent of the choice of the $\G$-invariant fiberwise \K metric $h_{\xs}$.
	
	\begin{thm}\label{p-3-4}
		We define a real-valued function $\tau_{M, \mathcal{K}, \xs}$ on $S$ by 
		$$
			\tau_{M, \mathcal{K}, \xs}(s) =\tau_{M, \mathcal{K}}(X_s, \iota_s) \quad (s \in S).
		$$
		Then $\tau_{M, \mathcal{K}, \xs}$ is smooth and satisfies
		$$
			-dd^c \log \tau_{M, \mathcal{K}, \xs} = \frac{(t+1)(t+7)}{16} c_1(f_*K_{\xs}, h_{L^2}) +\omega_{H^{\cdot}(\xss)}.
		$$
	\end{thm}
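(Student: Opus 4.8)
The plan is to exploit the metric-independence of $\tau_{M,\mathcal{K},\xs}$ to pass to a convenient metric, and then to differentiate the resulting expression factor by factor using the curvature formulas for (equivariant) Quillen metrics.

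\emph{Reduction and smoothness.} By \cite{I1} the value $\tau_{M,\mathcal{K}}(X_s,\iota_s)$ is independent of the $\iota$-invariant fiberwise \K metric, and since the asserted identity is local on $S$ I may choose, on a small ball, the fiberwise Ricci-flat metric $h_{\xs}$ whose \K class is a locally constant $\iota$-invariant section of $R^2f_*\mathbb{R}$ (such a class exists because $\iota$ acts locally constantly on $R^2f_*\mathbb{R}$ and the $\iota$-invariant \K cone is open; the Ricci-flat metric in it is unique by Yau's theorem, hence $\iota$-invariant, and varies smoothly in $s$ by the implicit function theorem applied to the Monge--Amp\`ere equation). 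For this $h_{\xs}$ one has $\varphi\equiv1$, so $A(X_s,\iota_s,h_{X_s})\equiv1$, and $\vol(X_s,\omega_{X_s})$ and $\vol(X^\iota_s,\omega_{X^\iota_s})$ are locally constant, so the factors $A$, $\vol(X,\omega_X)^{(t-1)(t-7)/16}$ and $\vol(X^\iota,\omega_{X^\iota})^{-2}$ of Definition \ref{d-3-1} contribute nothing to $dd^c\log$. Smoothness of $\tau_{M,\mathcal{K},\xs}$ follows: the fixed locus $\mathscr{X}^\iota$ of the involution on the total space is a smooth submanifold and, all $X^\iota_s$ being $2$-dimensional by \cite{MR2805992}, $f|_{\mathscr{X}^\iota}$ is a submersion, so $\mathscr{X}^\iota\to S$ is a smooth family of surfaces; analytic torsion varies smoothly in families by Bismut--Gillet--Soul\'e, Bismut--K\"ohler and their $\mu_2$-equivariant refinement; the Hodge bundles carry smooth $L^2$-metrics (the Hodge numbers being locally constant); and the covolume of the local system $R^1f_*\mathbb{Z}$ on $\mathscr{X}^\iota$ is smooth in its smoothly varying $L^2$-metric.

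\emph{The factors on $\mathscr{X}^\iota$.} For $\tau(\bar{\mathcal{O}}_{X^\iota})^{-2}$ and $\vol_{L^2}(H^1(X^\iota,\mathbb{Z}),\omega_{X^\iota})$, which live on $f:\mathscr{X}^\iota\to S$, I would use Bismut--Gillet--Soul\'e: writing $\tau(\bar{\mathcal{O}}_{X^\iota})$ as the ratio of the Quillen to the $L^2$-metric on $\lambda(\mathcal{O}_{\mathscr{X}^\iota})=\det f_*\mathcal{O}\otimes(\det R^1f_*\mathcal{O})^{-1}\otimes\det R^2f_*\mathcal{O}$, its $dd^c\log$ is the sum of the fiber integral $\int_{\xss}[\mathrm{Td}(T\xss)]^{(3)}$, a multiple of $\int_{\xss}c_1(T\xss)c_2(T\xss)$, and of $c_1$ of the three Hodge bundles with the $L^2$-metric; Serre duality gives $R^2f_*\mathcal{O}_{\mathscr{X}^\iota}\cong(f_*K_{\xss})^\vee$. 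For $\vol_{L^2}(H^1(X^\iota,\mathbb{Z}),\omega_{X^\iota})$ I would use that it equals, up to a universal constant, $|\det(\text{period matrix})|^2/\det(\langle\omega_i,\omega_j\rangle_{L^2})$ and that the periods $\int_{e_i}\omega_j$ are holomorphic in $s$, so $dd^c\log\vol_{L^2}(H^1(X^\iota,\mathbb{Z}))$ is, up to sign, $c_1(f_*\Omega^1_{\xss},h_{L^2})$. Matching the dimensions and $\iota$-traces of $H^\bullet(X^\iota,\mathcal{O})$ with those of $H^\bullet(X^\iota,\Omega^1)$ via Hodge symmetry and Serre duality, the combined contribution of these two factors to $-dd^c\log\tau_{M,\mathcal{K},\xs}$ is $\omega_{H^{\cdot}(\xss)}$ together with a fiber integral that is a multiple of $\int_{\xss}c_1(T\xss)c_2(T\xss)$.

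\emph{The equivariant torsion factor.} For $\tau_\iota(\bar\Omega^1_X)$ I would write it as the ratio of the equivariant Quillen to the equivariant $L^2$-metric on $\lambda_{\mu_2}(\Omega^1_X)=\lambda_+(\Omega^1_X)\oplus\lambda_-(\Omega^1_X)$ and apply Bismut's equivariant curvature theorem: $-dd^c\log$ of the equivariant Quillen metric is the $(1,1)$-part of $\int_{\xss}\mathrm{Td}_\iota(T\xs)\,\mathrm{ch}_\iota(\Omega^1_{\xs})$, the equivariant Todd and Chern forms being assembled from the genuine Chern forms of $T\xss$ and of the normal bundle $N_{\mathscr{X}^\iota/\mathscr{X}}$, on which $\iota$ acts by $-1$. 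Here I would use that $X^\iota$ is a Lagrangian surface of the holomorphic-symplectic fourfold $X$ --- indeed $\iota^*\eta=-\eta$ forces $\eta|_{X^\iota}=0$ --- so the symplectic form gives $N_{\mathscr{X}^\iota/\mathscr{X}}\cong\Omega^1_{\xss}$; together with $c_1(T\xs)=0$ this rewrites the whole integrand through $c_i(T\xss)$ and through the Hodge line $f_*K_{\xs}$, which enters because the splitting $\Omega^1_X|_{X^\iota}=\Omega^1_{X^\iota}\oplus N^\vee$ feeds into $\mathrm{ch}_\iota(\Omega^1_{\xs})$. Adding the $dd^c\log$ of the equivariant $L^2$-metric, which involves only $c_1$ of Hodge bundles and in which the relevant dimensions and $\iota$-traces of $H^\bullet(X,\Omega^1_X)$ are determined by $(M,\iota_M)$ --- hence by $t=\operatorname{Tr}(\iota^*|_{H^{1,1}(X)})$ via the holomorphic Lefschetz fixed-point formula on $X$ --- the net contribution of $\tau_\iota(\bar\Omega^1_X)$ becomes $\tfrac{(t+1)(t+7)}{16}c_1(f_*K_{\xs},h_{L^2})$ minus the same multiple of $\int_{\xss}c_1(T\xss)c_2(T\xss)$ appearing above, so the two fiber integrals cancel. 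Summing the three contributions yields the formula.

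\emph{Main obstacle.} The quantitative heart is this last step: expanding $\int_{\xss}\mathrm{Td}_\iota(T\xs)\,\mathrm{ch}_\iota(\Omega^1_{\xs})$ in the single degree producing a $(1,1)$-form on $S$, substituting the Lagrangian identification $N_{\mathscr{X}^\iota/\mathscr{X}}\cong\Omega^1_{\xss}$ and $c_1(T\xs)=0$, pushing the remaining Chern forms of $T\xss$ down to $S$ by Grothendieck--Riemann--Roch along $f:\mathscr{X}^\iota\to S$, and tracking every numerical coefficient so that the polynomial in $t$ is \emph{exactly} $(t+1)(t+7)/16$. This bookkeeping is delicate because $t$ enters simultaneously as a multiple of $\rk H^{1,1}(X)^\iota$ in the $L^2$-term, as an alternating sum of $\iota^*$-traces on several cohomology groups through holomorphic Lefschetz, and through topological invariants of $X^\iota$ (such as $\chi(\mathcal{O}_{X^\iota})$ and $\int_{X^\iota}c_1(N_{X^\iota/X})^2$) that are themselves constrained by the classification of admissible sublattices; reconciling these three appearances of $t$ is the crux of the proof.
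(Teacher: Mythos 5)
Your overall strategy---reduce to a smooth family of $\iota$-invariant Ricci-flat metrics, then differentiate each factor of Definition \ref{d-3-1} via the curvature theorem for the equivariant Quillen metric on $\lambda_{\G}(\Omega^1_{\xs})$, the Bismut--Gillet--Soul\'e formula for $\lambda(\mathcal{O}_{\mathscr{X}^{\iota}})$, and the Lagrangian identification $N_{\mathscr{X}^{\iota}/\mathscr{X}}\cong\Omega^1_{\xss}$ forced by $\iota^*\eta=-\eta$---is the right one and is, in outline, the route of the cited proof \cite[Theorem 3.12]{I1}. The smoothness discussion and the geometric inputs ($X^{\iota}$ Lagrangian, $c_1(TX)=0$, Serre/Hodge duality among the Hodge bundles of $\xss$) are correct.

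Nevertheless there is a genuine gap, and you name it yourself: the entire content of the theorem is the exact identity, i.e.\ the coefficient $\tfrac{(t+1)(t+7)}{16}$, the precise expression (\ref{al6-2-1-1}) for $\omega_{H^{\cdot}(\xss)}$, and, most delicately, the cancellation of the fiber integrals $\int_{\xss}c_1(T\xss)^3$ and $\int_{\xss}c_1(T\xss)c_2(T\xss)$ produced on one side by the degree-$3$ component of the equivariant Todd--Chern integrand restricted to $\xss$ and on the other by $\operatorname{Td}_3(T\xss)=\tfrac{1}{24}c_1c_2$ weighted by the exponent $-2$ on $\tau(\bar{\mathcal{O}}_{X^{\iota}})$. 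These pushforwards are not first Chern forms of Hodge bundles, so the stated formula is false unless they cancel exactly; checking this is precisely where the exponents $-2$ and $\tfrac{(t-1)(t-7)}{16}$ of Definition \ref{d-3-1} are consumed, and your proposal asserts rather than verifies it. The same applies to the three separate appearances of $t$ you list: until the Lefschetz-type traces, the ranks $h^{1,1}(X)_{\pm}=\tfrac{21\pm t}{2}$, and the topological invariants of $X^{\iota}$ are actually combined and shown to yield $(t+1)(t+7)/16$, no identity has been proved. Two secondary cautions: (i) your reduction to Ricci-flat metrics invokes the metric-independence of $\tau_{M,\mathcal{K}}$ (Theorem \ref{tA-1-1}), which in \cite{I1} is established after, and partly by means of, the curvature formula; to avoid circularity you must either prove metric-independence separately from the equivariant anomaly formula or run the computation for an arbitrary $\iota$-invariant metric and let $A(X,\iota,h_X)$ absorb the Bott--Chern terms (which is what the correction factor is there for); (ii) the constant in front of $c_1(f_*\Omega^1_{\xss},h_{L^2})$ coming from $\vol_{L^2}(H^1(X^{\iota},\mathbb{Z}),\omega_{X^{\iota}})$ is left as ``up to sign'', but getting it (and its companion $c_1(R^1f_*\mathcal{O}_{\mathscr{X}^{\iota}},h_{L^2})$, which is not equal to $-c_1(f_*\Omega^1_{\xss},h_{L^2})$) exactly right is needed to land on $\omega_{H^{\cdot}(\xss)}$ with coefficient $1$.
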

	
	\begin{proof}
		See \cite[Theorem 3.12]{I1}.
	\end{proof}
	
	\begin{thm}\label{tA-1-1}
		For $(X, \iota) \in \tilde{\mathcal{M}}_{M, \mathcal{K}}$, the real number $\tau_{M, \mathcal{K}}(X, \iota)$ is independent of the choice of an $\iota$-invariant \K form.
		In particular, $\tau_{M, \mathcal{K}}(X, \iota)$ is an invariant of $(X, \iota)$.
	\end{thm}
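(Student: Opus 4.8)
\emph{Proof plan.} Fix $(X,\iota)\in\tilde{\mathcal M}_{M,\mathcal K}$ and, for an $\iota$-invariant K\"ahler form $\omega$ on $X$, write $F(\omega)$ for the real number $\tau_{M,\mathcal K}(X,\iota)$ of Definition \ref{d-3-1} computed from $\omega$. Let $\mathcal C$ be the convex set of all $\iota$-invariant K\"ahler forms on $X$; the goal is to show $F$ is constant on $\mathcal C$. The plan is to combine two facts. First, $\log F$ is affine along every line segment of $\mathcal C$; this will come from the curvature identity of Theorem \ref{p-3-4} applied to the \emph{constant} family. Second, $\log F$ has a controlled behaviour under the rescaling $\omega\mapsto\lambda\omega$, $\lambda>0$; this will come from tracking the scaling weights of the individual factors in Definition \ref{d-3-1}. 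Together these two facts force $F$ to be constant.

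For the first fact, let $\omega_0,\omega_1\in\mathcal C$ and put $\omega_u=(1-u)\omega_0+u\omega_1$, which again lies in $\mathcal C$. Over a small disc $S\subset\mathbb C$ consider the constant family $f:\mathscr X=X\times S\to S$ with $\iota$ acting on the first factor --- a family of $K3^{[2]}$-type manifolds with involution of type $(M,\mathcal K)$ --- equipped with the $\iota$-invariant fibrewise K\"ahler form equal to $\omega_{\operatorname{Re}(s)}$ over $s$. By Theorem \ref{p-3-4} the function $\tau_{M,\mathcal K,\xs}$ is smooth and
$$
-dd^c\log\tau_{M,\mathcal K,\xs}=\tfrac{(t+1)(t+7)}{16}\,c_1(f_*K_{\xs},h_{L^2})+\omega_{H^{\cdot}(\xss)}.
$$
Both terms on the right vanish for a constant family. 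The line bundle $f_*K_{\xs}$ has fibre $H^0(X,K_X)=\mathbb C\,\eta^2$, and for a holomorphic $4$-form on a fourfold $\|\eta^2\|_{L^2}^2$ equals a universal constant times $\int_X\eta^2\wedge\overline{\eta^2}$, independently of the fibrewise metric; hence $h_{L^2}$ is a constant hermitian metric and $c_1(f_*K_{\xs},h_{L^2})=0$. And $\omega_{H^{\cdot}(\xss)}=0$, because by \cite[Lemma 3.13]{I1} it does not depend on the fibrewise K\"ahler metric, so it may be computed with the product metric, for which $f_*\Omega^1_{\xss}$, $R^1f_*\mathcal O_{\mathscr X^{\iota}}$ and $f_*K_{\xss}$ are holomorphically trivial bundles with constant $L^2$-metrics, so that \eqref{al6-2-1-1} gives $0$. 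Thus $\log\tau_{M,\mathcal K,\xs}$ is pluriharmonic on $S$; since it equals $\log F(\omega_{\operatorname{Re}(s)})$, which depends only on $\operatorname{Re}(s)$, this forces $\tfrac{d^2}{du^2}\log F(\omega_u)=0$, i.e.\ $u\mapsto\log F(\omega_u)$ is affine.

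For the second fact, under $\omega\mapsto\lambda\omega$ each factor of Definition \ref{d-3-1} rescales in the standard way: the eigenvalues of the Dolbeault Laplacians on $X$ and on $X^{\iota}$ are multiplied by $\lambda^{-1}$, so $\log\tau_{\iota}(\bar\Omega^1_X)$ and $\log\tau(\bar{\mathcal O}_{X^{\iota}})$ change by $\log\lambda$ times the topological constants $\sum_q(-1)^qq\,\zeta_{0,q,\iota}(0)$ and $\sum_q(-1)^qq\,\zeta^{X^{\iota}}_{0,q}(0)$; $\log\vol(X,\omega)$, $\log\vol(X^{\iota},\omega_{X^{\iota}})$ and $\log\vol_{L^2}(H^1(X^{\iota},\mathbb Z),\omega_{X^{\iota}})$ change by fixed integer multiples of $\log\lambda$; and $A(X,\iota,h_X)$ is unchanged, since the function $\varphi$ of \eqref{al2-3-1-2} and the Chern forms entering $\varOmega$ in \eqref{al2-3-1-3} are invariant under scaling the metric by a positive constant. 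Hence $\log F(\lambda\omega)=\log F(\omega)+K\log\lambda$ for a constant $K=K(X,\iota)$. Applying the first fact to the segment from $\omega_0$ to $2\omega_0$, the map $u\mapsto\log F((1+u)\omega_0)=\log F(\omega_0)+K\log(1+u)$ is affine in $u$; as $\log(1+u)$ is not affine, $K=0$ and $\log F$ is scale-invariant. For arbitrary $\omega_0,\omega_1\in\mathcal C$ the first fact gives $\log F((1-u)\omega_0+u\omega_1)=(1-u)\log F(\omega_0)+u\log F(\omega_1)$; replacing $\omega_1$ by $\varepsilon\omega_1$ and letting $\varepsilon\downarrow 0$, scale-invariance and the continuity of $F$ give $\log F(\omega_0)=(1-u)\log F(\omega_0)+u\log F(\omega_1)$ for $u\in(0,1)$, whence $\log F(\omega_0)=\log F(\omega_1)$. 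Thus $\tau_{M,\mathcal K}(X,\iota)$ is independent of the $\iota$-invariant K\"ahler form.

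The step most in need of care is the vanishing of the curvature term for the constant family --- in particular the reduction of $\omega_{H^{\cdot}(\xss)}$ to a product-metric computation via \cite[Lemma 3.13]{I1}, and the metric-independence of $\|\eta^2\|_{L^2}$. The scaling bookkeeping in the third step is routine; in fact the exponent $(t-1)(t-7)/16$ in Definition \ref{d-3-1} is exactly the one that makes $K=0$ directly, which is consistent with the argument above.
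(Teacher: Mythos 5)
Your argument is correct, but it is not the route the paper takes: the paper simply defers to \cite[Theorem 3.14]{I1}, where (as in the analogous statement of Yoshikawa for $\tau_{M_0}$) the metric--independence is obtained by comparing two choices of metric directly through the equivariant anomaly formulas, the Bott--Chern correction terms cancelling against the factors $A$, $\vol$, etc.\ in Definition \ref{d-3-1}. You instead take the curvature formula of Theorem \ref{p-3-4} as a black box, apply it to the constant family $X\times S$ with the fibrewise metric $\omega_{\operatorname{Re}(s)}$, and observe that both terms on the right-hand side vanish there: $c_1(f_*K_{\xs},h_{L^2})=0$ because the $L^2$-norm of a holomorphic top form is $\int_X\eta^2\wedge\bar\eta^2$ and hence metric-independent, and $\omega_{H^\cdot(\xss)}=0$ by \cite[Lemma 3.13]{I1} applied with the product metric. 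This yields affineness of $\log\tau_{M,\mathcal K}$ along segments in the cone of $\iota$-invariant K\"ahler forms, and the dilation bookkeeping (each factor in Definition \ref{d-3-1} scales by a fixed power of $\lambda$, while $A(X,\iota,h_X)$ is scale-invariant) then forces the affine function to be constant; all of these steps check out, including the final limit $\omega_1\mapsto\varepsilon\omega_1$, which only requires continuity of the invariant at the interior point $(1-u)\omega_0$ of the K\"ahler cone. The one point you should make explicit is that this is not circular: Theorem \ref{p-3-4} must be read as a statement about the function computed from the \emph{given} fibrewise metric $h_{\xs}$ (which is how it is stated, and is consistent with the ordering of Theorems 3.12 and 3.14 in \cite{I1}), not about an already metric-independent quantity. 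What your approach buys is a soft, conceptually transparent deduction of Theorem \ref{tA-1-1} from Theorem \ref{p-3-4} that avoids redoing any anomaly computation; what it costs is that it rests entirely on the correctness and precise formulation of the family-version curvature identity, whereas the direct two-metric comparison is self-contained.
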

	
	\begin{proof}
		See \cite[Theorem 3.14]{I1}.
	\end{proof}
	
	By \cite[Lemma 3.15]{I1}, $\tau_{M, \mathcal{K}}$ is viewed as a smooth real-valued function on $\mathcal{M}^{\circ}_{M, \mathcal{K}}$.
	Namely,
	$$
		\tau_{M,\mathcal{K}}(p) = \tau_{M,\mathcal{K}}(X, \iota) \quad ((X, \iota) \in P_{M, \mathcal{K}}^{-1}(p) )
	$$
	is independent of the choice of $(X, \iota) \in P_{M, \mathcal{K}}^{-1}(p)$.
	
	
	
	
	
	Let $\mathcal{M}_{M_0}^*$ be the Baily-Borel compactification of $\mathcal{M}_{M_0}$.
		
	\begin{thm}\label{p2-1-2-2}
		Let $C \subset \mathcal{M}^*_{M, \mathcal{K}}$ be an irreducible projective curve.
		We assume that $C$ is not contained in $\bar{\mathscr{D}}_{M^{\perp}} \cup (\mathcal{M}^*_{M, \mathcal{K}} \setminus \mathcal{M}_{M, \mathcal{K}})$.
		Let $p \in C \cap \bar{\mathscr{D}}_{M^{\perp}}$ be a smooth point of $C$.
		Choose a coordinate $(\D, s)$ on $C$ centered at $p$ such that $\D \cap \bar{\mathscr{D}}_{M^{\perp}} = \{p \}$ and $s(\D)$ is the unit disk.
		Then there exists a constant $a \in \mathbb{Q}$ such that the following identity holds:
		$$
			\log \tau_{M, \mathcal{K}} (s) = a \log |s|^2 + O(\log \log |s|^{-1}).
		$$
	\end{thm}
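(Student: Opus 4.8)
The plan is to establish the estimate first for a generic transverse one-parameter degeneration onto $\bar{\mathscr{D}}_{M^{\perp}}$, and then to transport it to the curve $C$. Since $p$ is a smooth point of $C$ with $\D \cap \bar{\mathscr{D}}_{M^{\perp}} = \{p\}$, after shrinking $\D$ the punctured disk $\D^{*} := \D \setminus \{p\}$ lies in $\mathcal{M}^{\circ}_{M,\mathcal{K}}$, where $\tau_{M,\mathcal{K}}$ is a smooth positive function, so the whole problem is local along $\bar{\mathscr{D}}_{M^{\perp}}$. At a generic point of a component of $\bar{\mathscr{D}}_{M^{\perp}}$, cut by a root $\delta \in \Delta(M^{\perp})$, I would take a small disk $\D_{w}$ in $\mathcal{M}_{M,\mathcal{K}}$ meeting $\bar{\mathscr{D}}_{M^{\perp}}$ transversally at $w = 0$ and realise it (after passing to a local chart of $\Omega^{+}_{M^{\perp}}$, which only replaces $w$ by a finite power of itself) as the base of a family $f : \X \to \D_{w}$ with involution $\iota$ such that $(X_{w},\iota_{w})$ is of type $(M,\mathcal{K})$ for $w \neq 0$ while $X_{0}$ is singular: as $w \to 0$ the class $\delta$ becomes of type $(1,1)$, which forces $X_{w}$ to acquire singularities of a definite type and the fixed surface $X_{w}^{\iota}$ to degenerate accordingly, the variations of Hodge structure on $H^{\bullet}(X_{w})$ and $H^{\bullet}(X_{w}^{\iota})$ over $\D_{w}^{*}$ being quasi-unipotent with rational exponents by the monodromy theorem. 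By Theorem~\ref{tA-1-1} I am free to fix the $\iota$-invariant \K form, and I would take it to be the restriction of a \K form on a fixed resolution of the total space, so that all the metric data occurring in Definition~\ref{d-3-1} vary real-analytically on $\D_{w}^{*}$ and the volumes $\vol(X_{w},\omega_{X_{w}})$, $\vol(X_{w}^{\iota},\omega_{X_{w}^{\iota}})$ are locally constant.

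The heart of the matter is to estimate each factor of $\tau_{M,\mathcal{K}}(X_{w},\iota_{w})$ as $w \to 0$. For the equivariant analytic torsion $\tau_{\iota}(\bar{\Omega}^{1}_{X_{w}})$ and the analytic torsion $\tau(\bar{\mathcal{O}}_{X_{w}^{\iota}})$ of the degenerating fixed surface I would use Bismut's asymptotics for Quillen metrics of degenerating families — in the $\G$-equivariant form developed by Yoshikawa \cite{MR2047658} and in \cite{I2} — which produce expansions of the shape
$$
	(\text{rational}) \cdot \log|w|^{2} + (\text{rational}) \cdot \log\log|w|^{-1} + O(1),
$$
the rational coefficients being combinations of Chern numbers of the local model of the singular fibre and of its fixed locus together with their $\G$-weights, and the $\log\log$-term recording the contribution of the small eigenvalues along the degeneration. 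The covolume $\vol_{L^{2}}(H^{1}(X_{w}^{\iota},\mathbb{Z}),\omega_{X_{w}^{\iota}})$ is governed by the limiting mixed Hodge structure of the degenerating surface and has an expansion of the same shape with rational exponents; the Bott--Chern term $A(X_{w},\iota_{w},h_{X_{w}})$ is controlled by the local intersection theory of the degeneration and contributes a rational leading coefficient; the volume factors contribute nothing by the choice of metric made above. Collecting these contributions — and using the curvature identity of Theorem~\ref{p-3-4}, whose right-hand side consists of Chern forms of Hodge bundles that extend across $\bar{\mathscr{D}}_{M^{\perp}}$ with a polar part of rational coefficients, as a cross-check — I obtain a rational number $\alpha_{\delta}$ depending only on the component in question, with $\log\tau_{M,\mathcal{K}} = \alpha_{\delta}\log|w|^{2} + O(\log\log|w|^{-1})$ near a generic point of that component. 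This is in substance the algebraicity of the singularity of $\tau_{M,\mathcal{K}}$ established in \cite{I2}.

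It remains to transport this to $C$. Near $p$, pick local equations $w_{1},\dots,w_{r}$ of the finitely many components of $\bar{\mathscr{D}}_{M^{\perp}}$ through $p$; by the previous step $\log\tau_{M,\mathcal{K}} = \sum_{i}\alpha_{\delta_{i}}\log|w_{i}|^{2} + O(\log\log)$ on a neighbourhood of $p$ minus $\bar{\mathscr{D}}_{M^{\perp}}$. Since $C \not\subset \bar{\mathscr{D}}_{M^{\perp}}$ and $p$ is a smooth point of $C$, each $w_{i}|_{C}$ vanishes at $p$ to a finite order $m_{i} \geqq 1$; writing $w_{i}|_{C} = c_{i}\, s^{m_{i}} + \cdots$ and substituting gives $\log\tau_{M,\mathcal{K}}(s) = a\log|s|^{2} + O(\log\log|s|^{-1})$ with $a := \sum_{i} m_{i}\, \alpha_{\delta_{i}} \in \mathbb{Q}$, which is the assertion. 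The main obstacle is the second step: one must pin down the degeneration of $X_{w}$ and of $X_{w}^{\iota}$ along each component of $\bar{\mathscr{D}}_{M^{\perp}}$ precisely enough to run Bismut's curvature and anomaly formulas $\G$-equivariantly and to extract the rational leading exponent $\alpha_{\delta}$ — this is exactly where one invokes the degeneration analysis of \cite{I2}, which follows Yoshikawa's treatment of $2$-elementary K3 surfaces in \cite{MR2047658}. The remaining ingredients — the Hodge-theoretic control of the covolume, the intersection-theoretic control of the Bott--Chern term, and the reparametrisation onto $C$ — are comparatively routine.
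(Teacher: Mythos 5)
The paper offers no argument here: its ``proof'' of Theorem \ref{p2-1-2-2} is the single citation to \cite[Theorem 2.15]{I2}. Your sketch, read as a self-contained proof, is therefore circular at its core: the ``heart of the matter'' (your second paragraph) is discharged by invoking ``the degeneration analysis of \cite{I2}'', i.e.\ by the very result being proved. Setting that aside and judging the strategy on its own terms, there is one genuine structural gap and the route also differs from the one the author actually uses.

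The gap is in your localization-then-restriction scheme. You establish the expansion with rational leading coefficient only on a transverse disk through a \emph{generic} point of a component of $\bar{\mathscr{D}}_{M^{\perp}}$, and then in the last step you assert an expansion $\log\tau_{M,\mathcal{K}}=\sum_i\alpha_{\delta_i}\log|w_i|^2+O(\log\log)$ on a full punctured neighbourhood of $p$ and restrict it to $C$. But the hypothesis is only that $p$ is a smooth point of $C$; it may well be a non-generic point of the discriminant, e.g.\ a point where several components $H_{\delta_i}$ meet, or where $C$ is tangent to a component, or an orbifold point of $\mathcal{M}_{M,\mathcal{K}}$. The generic-point analysis gives no uniform control of the $O(\log\log)$ error near such points, so the substitution $w_i|_C=c_is^{m_i}+\cdots$ is not justified. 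This is precisely the difficulty the actual argument avoids: as the paper itself does in \S\ref{ss-2-3} for the analogous function $u$, one uses \cite[Proposition 2.14]{I2} to build a smooth projective variety $\X$ with a normal crossing degeneration $f:\X\to B$ over a smooth projective curve $B$ finitely covering $C$, applies the asymptotic expansion of (equivariant) Quillen metrics for one-parameter normal crossing degenerations \emph{directly to that family} to get $a_0\log|t|^2+O(\log\log|t|^{-1})$ with $a_0\in\mathbb{Q}$, and then divides by the ramification index $\nu$ of $B\to C$ at $p$ to get $a=a_0/\nu$. Your reparametrisation step at the very end is the right idea, but it must be applied to a degeneration living over (a cover of) $C$ itself, not to slices transverse to the discriminant.

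Two smaller points. First, over a transverse disk in the moduli space there is in general no smooth total space with involution extending the family across the discriminant (the universal family does not extend; one needs semistable reduction, which is exactly what the normal crossing model over a curve provides), so your family $f:\X\to\D_w$ with a ``fixed resolution of the total space'' is not available for free. Second, even granting such a model, the fixed locus may acquire vertical components over the singular fibre (the decomposition $\X^{\iota}=\X^{\iota}_H\sqcup\X^{\iota}_V$ in \S\ref{ss-2-3}), so the claim that $\vol(X_w^{\iota},\omega|_{X_w^{\iota}})$ is locally constant needs the flat part $\X^{\iota}_H$ to be isolated first. These are repairable, but the non-generic-point issue above is the step that would actually fail as written.
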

	
	\begin{proof}
		See \cite[Theorem 2.15]{I2}.
	\end{proof}

\subsection{The invariant $\tau_{M_0}$ and comparison between $\tau_{M,\mathcal{K}}$ and $\tau_{M_0}$}

	Let us recall the invariant $\tau_{M_0}$ of 2-elementary K3 surface of type $M_0$ introduced in \cite[Definition~5.1]{MR2047658}.
	Let  $(Y, \sigma)$ be 2-elementary K3 surface of type $M_0$.
	Choose a $\sigma$-invariant Ricci-flat \K metric $h_Y$ on $Y$and its associated \K form is denoted by $\omega_Y$.
	We define the volume of $(Y, \omega_Y)$ by
	$
		\vol(Y, \omega_{Y}) = \int_Y \omega_{Y}^2/2!.
	$
	The fixed locus of $\sigma : Y \to Y$ is denoted by $Y^{\sigma}$.
	Assume that $Y^{\sigma} \neq \emptyset$. 
	Let $Y^{\sigma} = \sqcup_i C_i$ be the decomposition into the connected components.
	We define the volume of $(Y^{\sigma}, \omega_{Y^{\sigma}})$ by
	$
		\vol(Y^{\sigma}, \omega_{Y^{\sigma}}) =\prod_i \vol(C_i, \omega_Y|_{C_i}) = \prod_i \int_{C_i} \omega_Y|_{C_i}.
	$
	If $Y^{\sigma} = \emptyset$, we set $\vol(Y^{\sigma}, \omega_{Y^{\sigma}}) =1$.
	
	Let $\tau_{\sigma}(\bar{\mathcal{O}}_Y)$ be the equivariant analytic torsion of the trivial line bundle $\bar{\mathcal{O}}_Y$ with respect to the canonical metric,
	and let $\tau(\bar{\mathcal{O}}_{Y^{\sigma}})$ be the analytic torsion of the trivial line bundle $\bar{\mathcal{O}}_{X^{\iota}}$ with respect to the canonical metric.
	If $Y^{\sigma} = \emptyset$, we set $\tau(\bar{\mathcal{O}}_{Y^{\sigma}})=1$.
	
	\begin{dfn}\label{d7-3-3-1}
		Let  $(Y, \sigma)$ be 2-elementary K3 surface of type $M_0$
		and let $h_Y$ be a $\sigma$-invariant Ricci-flat \K metric on $Y$.
		We define a real number $\tau_{M_0}(Y, \sigma)$ by
		\begin{align*}
			\tau_{M_0}(Y, \sigma)=\tau_{\sigma}(\bar{\mathcal{O}}_Y) \vol(Y, \omega_{Y})^{\frac{14 - \rk(M_0)}{4}}
			\tau(\bar{\mathcal{O}}_{Y^{\sigma}}) \vol(Y^{\sigma}, \omega_{Y^{\sigma}}).
		\end{align*}
	\end{dfn}
	
	By \cite[Theorem~5.7]{MR2047658}, $\tau_{M_0}(Y, \sigma)$ is independent of the choice of a $\sigma$-invariant Ricci-flat \K metric $h_Y$
	and is an invariant of $(Y, \sigma)$.
	
	If we set $\widetilde{M}_0=M_0 \oplus \mathbb{Z}e$, then $\widetilde{M}_0$ is an admissible sublattice of $L_2$.
	Let $\mathcal{K} \in \operatorname{KT}(\widetilde{M}_0)$ be a natural K\"ahler-type chamber.
	We consider the case where $r=\rk(M_0) \leqq 17$ and the divisor $\bar{\mathscr{D}}_{M_0^{\perp}}$ is irreducible.
	
	\begin{thm}\label{t2-1-4-1}
		Suppose that $\rk(M_0) \leqq 17$ and the divisor $\bar{\mathscr{D}}_{M_0^{\perp}}$ is irreducible.
		Then there exists a positive constant $C_{M_0}>0$ depending only on $M_0$ such that, for any 2-elementary K3 surface $(Y, \sigma)$ of type $M_0$, the following identity holds:
		$$
			\tau_{\widetilde{M}_0, \mathcal{K}}(Y^{[2]}, \sigma^{[2]})=C\tau_{M_0}(Y, \sigma)^{-2(r-9)}.
		$$
	\end{thm}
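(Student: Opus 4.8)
The plan is to use the standard mechanism for comparing holomorphic torsion invariants on orthogonal modular varieties: first show that the two sides, viewed as functions on $\mathcal{M}^{\circ}_{M_0}$, differ by a pluriharmonic function, and then rigidify. By Theorem \ref{t-3-1-1} I identify $\mathcal{M}_{\widetilde{M}_0,\mathcal{K}}$ with $\mathcal{M}_{M_0}$, so both $(Y,\sigma)\mapsto\tau_{\widetilde{M}_0,\mathcal{K}}(Y^{[2]},\sigma^{[2]})$ and $(Y,\sigma)\mapsto\tau_{M_0}(Y,\sigma)^{-2(r-9)}$ descend to positive smooth functions on $\mathcal{M}^{\circ}_{M_0}$ (by Theorems \ref{tA-1-1}, \ref{p-3-4} and \cite[Theorem~5.7]{MR2047658}). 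Writing $G:=\tau_{\widetilde{M}_0,\mathcal{K}}\cdot\tau_{M_0}^{2(r-9)}$, the goal is to show $G$ is constant on $\mathcal{M}^{\circ}_{M_0}$ and that the constant is determined by $M_0$.

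\textbf{Step 1: the difference is pluriharmonic.} I would take a family $g:\mathscr{Y}\to S$ of $2$-elementary K3 surfaces of type $M_0$ with its fibrewise involution $\sigma$, and form the relative Hilbert scheme $f:\mathscr{X}=\mathscr{Y}^{[2]}\to S$ with the induced involution $\iota=\sigma^{[2]}$, a family of type $(\widetilde{M}_0,\mathcal{K})$. Since $\iota_{\widetilde{M}_0}$ acts as $+1$ on $\widetilde{M}_0=M_0\oplus\mathbb{Z}e$ (rank $r+1$) and as $-1$ on $\widetilde{M}_0^{\perp_{L_2}}=M_0^{\perp_{L_{K3}}}$ (rank $22-r$), we get $\operatorname{Tr}(\iota_{\widetilde{M}_0}|_{L_2})=2r-21$, hence $t=2r-19$ and $\tfrac{(t+1)(t+7)}{16}=\tfrac{(r-9)(r-6)}{4}$. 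Theorem \ref{p-3-4} then gives
\begin{equation*}
-dd^c\log\tau_{\widetilde{M}_0,\mathcal{K},\mathscr{X}/S}=\frac{(r-9)(r-6)}{4}\,c_1(f_*K_{\mathscr{X}/S},h_{L^2})+\omega_{H^{\cdot}(\mathscr{X}^{\iota}/S)}.
\end{equation*}
Next I would rewrite both terms as characteristic forms on the period domain $\Omega^{+}_{M_0^{\perp}}$. Since $H^{4,0}(Y^{[2]})=\operatorname{Sym}^2 H^{2,0}(Y)$, the line bundle $f_*K_{\mathscr{X}/S}$ is the square of $g_*K_{\mathscr{Y}/S}$, and by the Fujiki relation together with the isometric embedding $H^2(Y,\mathbb{Z})\hookrightarrow H^2(Y^{[2]},\mathbb{Z})$ their $L^2$-metrics agree up to a universal constant; hence $c_1(f_*K_{\mathscr{X}/S},h_{L^2})=2\,c_1(\lambda_{M_0^{\perp}},h_{L^2})$, where $\lambda_{M_0^{\perp}}$ is the Hodge line bundle on $\Omega^{+}_{M_0^{\perp}}$ with its $L^2$-metric. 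For $\omega_{H^{\cdot}(\mathscr{X}^{\iota}/S)}=c_1(f_*\Omega^1_{\mathscr{X}^{\iota}},h_{L^2})-c_1(R^1f_*\mathcal{O}_{\mathscr{X}^{\iota}},h_{L^2})-2c_1(f_*K_{\mathscr{X}^{\iota}/S},h_{L^2})$ I would use the explicit description of the fixed surface $X_Y^{\iota_Y}$ in terms of the quotient $Y/\sigma$ and the fixed curves $Y^{\sigma}$ (as in Camere--Garbagnati--Mongardi \cite{MR3928256}): component by component, the relevant Hodge bundles of $X_Y^{\iota_Y}$ are assembled from those of the family of quotients $Y/\sigma$ (which carry flat metrics and contribute nothing to $dd^c$) and from the Hodge bundles of the universal family of fixed curves, which are precisely the bundles entering the fixed-curve term of Yoshikawa's curvature formula for $\tau_{M_0}$ \cite{MR2047658}. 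Comparing with $-dd^c\log\tau_{M_0,\mathscr{Y}/S}^{-2(r-9)}=-2(r-9)\,(-dd^c\log\tau_{M_0,\mathscr{Y}/S})$ and noting that the $c_1(\lambda_{M_0^{\perp}})$-coefficient of $-dd^c\log\tau_{M_0,\mathscr{Y}/S}$, multiplied by $-2(r-9)$, equals $\tfrac{(r-9)(r-6)}{2}$ — exactly the coefficient produced by the first term above — the curvature identity $-dd^c\log\tau_{\widetilde{M}_0,\mathcal{K},\mathscr{X}/S}=-dd^c\log\tau_{M_0,\mathscr{Y}/S}^{-2(r-9)}$ reduces to the assertion that $\omega_{H^{\cdot}(\mathscr{X}^{\iota}/S)}$ equals $-2(r-9)$ times the fixed-curve term in Yoshikawa's formula. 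Granting this, $dd^c\log G=0$ on $\mathcal{M}^{\circ}_{M_0}$.

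\textbf{Step 2: rigidity.} Thus $\log G$ is pluriharmonic on $\mathcal{M}^{\circ}_{M_0}=\mathcal{M}_{M_0}\setminus\bar{\mathscr{D}}_{M_0^{\perp}}$. By Theorem \ref{p2-1-2-2} applied to $\tau_{\widetilde{M}_0,\mathcal{K}}$ and the analogous asymptotic estimate for $\tau_{M_0}$ from \cite{MR2047658}, along a small disk meeting $\bar{\mathscr{D}}_{M_0^{\perp}}$ transversally at a generic point one has $\log G(s)=b\log|s|^2+O(\log\log|s|^{-1})$ with $b\in\mathbb{Q}$ independent of the disk chosen (by irreducibility of $\bar{\mathscr{D}}_{M_0^{\perp}}$), and the two discriminant asymptotics match because the degeneration of $Y^{[2]}$ along $\bar{\mathscr{D}}_{M_0^{\perp}}$ is directly governed by that of $Y$ (cf. \cite{I2}, \cite{MR2047658}), so $b=0$. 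Hence $\log G$ is bounded by $O(\log\log|s|^{-1})$ near $\bar{\mathscr{D}}_{M_0^{\perp}}$, which forces it to extend pluriharmonically across the divisor. Since $\rk(M_0)\leqq 17$, the variety $\mathcal{M}_{M_0}$ has dimension $20-r\geqq 3$, so the boundary of the Baily--Borel compactification $\mathcal{M}^{*}_{M_0}$ has codimension $\geqq 2$; by the standard removable-singularity argument (using the at most logarithmic growth of $\log G$ at the boundary implied by the curvature formula) $\log G$ extends to a pluriharmonic function on the normal projective variety $\mathcal{M}^{*}_{M_0}$, hence is constant. This yields $G\equiv C_{M_0}$ for some positive $C_{M_0}$ depending only on $M_0$, which is the claim.

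\textbf{The main obstacle} is the identity in Step 1 equating $\omega_{H^{\cdot}(\mathscr{X}^{\iota}/S)}$ with $-2(r-9)$ times Yoshikawa's fixed-curve term. This demands a precise understanding of the fixed surface $X_Y^{\iota_Y}$: the number and isomorphism types of its connected components (a piece birational to $Y/\sigma$, symmetric-square pieces over the fixed curves, and $\mathbb{P}^1$-bundle pieces over the fixed curves), their cohomology $H^0(\Omega^1)$, $H^1(\mathcal{O})$, $H^0(K)$, and how the Hodge structures — in particular those of positive-genus fixed curves $C_i\subset Y^{\sigma}$ — vary over $\mathcal{M}^{\circ}_{M_0}$; arranging these so that the positive-genus contributions combine correctly and the combinatorial factor $2(r-9)$ emerges is the real computational content. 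By contrast, Step 2 is a now-standard Baily--Borel rigidity argument built on the singularity estimates of \cite{I2} and \cite{MR2047658}.
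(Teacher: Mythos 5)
Your overall architecture — match the curvature equations of the two invariants to get a pluriharmonic ratio, then kill the logarithmic singularity along $\bar{\mathscr{D}}_{M_0^{\perp}}$ and apply Baily--Borel rigidity — is exactly the mechanism this series of papers runs on; the paper itself only cites \cite[Theorem 4.6]{I2} for this statement, but its proof of the analogous Theorem \ref{t-2-3-4} follows precisely your two-step scheme, and your bookkeeping $t=2r-19$, $\tfrac{(t+1)(t+7)}{16}=\tfrac{(r-9)(r-6)}{4}$, $c_1(f_*K_{\mathscr{X}/S},h_{L^2})=2c_1(g_*K_{\mathscr{Y}/S},h_{L^2})$ is correct. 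However, two steps are asserted rather than proved, and one of them is misdescribed in a way that would derail the computation if taken literally. In Step 1 you must identify the fixed surface $(Y^{[2]})^{\sigma^{[2]}}$ precisely: it is the disjoint union of (a copy of) $Y/\sigma$ and $\operatorname{Sym}^2(Y^{\sigma})=\coprod_i\operatorname{Sym}^2C_i\sqcup\coprod_{i<j}C_i\times C_j$; there are \emph{no} separate $\mathbb{P}^1$-bundle components over the fixed curves (the nonreduced subschemes along the tangent, resp.\ normal, eigendirection are absorbed into $\operatorname{Sym}^2C_i$, resp.\ into the closure of the $Y/\sigma$ piece). This matters: a genuine $\mathbb{P}^1$-bundle component over $C_i$ would contribute $2\lambda_i$ to $\omega_{H^{\cdot}(\mathscr{X}^{\iota}/S)}$ with no canceling $K$-term and the factor $2(r-9)$ would not come out. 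With the correct decomposition the computation does close up: writing $\lambda_i$ for the Hodge bundle of $C_i$, the component $\operatorname{Sym}^2C_i$ contributes $(4-2g_i)\lambda_i$ and $C_i\times C_j$ contributes $(2-2g_j)\lambda_i+(2-2g_i)\lambda_j$, so the total coefficient of each $\lambda_i$ is $2+2N-2g$ ($N$ the number of components, $g$ the total genus), which by Nikulin's formulas $g=(22-r-a)/2$, $N=1+(r-a)/2$ equals $2(r-9)$ independently of $i$ and of $a$. This is the actual content of the theorem and you have only gestured at it.

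The second gap is in Step 2: you claim the coefficient $b$ of $\log|s|^2$ vanishes because ``the two discriminant asymptotics match.'' That cannot be argued this way from what is available: Theorem \ref{p2-1-2-2} (i.e.\ \cite[Theorem 2.15]{I2}) only asserts that the singularity coefficient of $\log\tau_{\widetilde{M}_0,\mathcal{K}}$ along $\bar{\mathscr{D}}_{M_0^{\perp}}$ is a rational number, not what it is, so there is nothing explicit to match against Yoshikawa's asymptotics for $\tau_{M_0}$. The correct argument, and the one the paper uses in the proof of Theorem \ref{t-2-3-4}, is global rather than local: irreducibility of $\bar{\mathscr{D}}_{M_0^{\perp}}$ gives the current equation $dd^c\log G=b\,\delta_{\bar{\mathscr{D}}_{M_0^{\perp}}}$ with a single constant $b$; since $\rk(M_0)\leqq 17$ the boundary of $\mathcal{M}_{M_0}^*$ has codimension $\geqq 2$, so one can choose a projective curve $C\subset\mathcal{M}_{M_0}$ meeting $\bar{\mathscr{D}}_{M_0^{\perp}}$ transversally and nontrivially, and the residue theorem applied to $\partial\log G|_C$ forces $(\deg C\cdot\bar{\mathscr{D}}_{M_0^{\perp}})\,b=0$, hence $b=0$. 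With that replacement (and the extension across the boundary via \cite[Satz 4]{MR0081960} as in the paper), your Step 2 is sound.
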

	
	\begin{proof}
		See \cite[Theorem 4.6]{I2}.
	\end{proof}
		
	In this case, it follows from \cite[Theorem 0.1]{MR4177283} that $\tau_{\widetilde{M}_0, \mathcal{K}}(Y^{[2]}, \sigma^{[2]})$  is expressed as the Petersson norm of a certain automorphic form on a bounded symmetric domain of type IV and a certain Siegel modular form.

\subsection{The $L^2$-metric on direct image sheaves}

	
	Let $\mathscr{M}$ and $B$ be complex manifolds of dimension $m+n$ and $n$, respectively.
	Let $\pi : \mathscr{M} \to B$ be a proper holomorphic submersion.
	Set $M_b = \pi^{-1}(b)$ for $b \in B$.
	Let $h_{\mathscr{M} / B}$ be a fiberwise \K metric on the relative tangent bundle $T\mathscr{M} / B$.
	The hermitian metric on $\Omega^p_{\mathscr{M} / B}$ induced from $h_{\mathscr{M} / B}$ is also denoted by $h_{\mathscr{M} / B}$.
	
	Since $\dim H^q(M_b, \Omega^p_{M_b})$ is independent of $b \in B$,
	the direct image sheaf $R^q\pi_*\Omega^p_{\mathscr{M} / B}$ is locally free.
	We regard $R^q\pi_*\Omega^p_{\mathscr{M} / B}$ as a holomorphic vector bundle on $B$.
	By Hodge theory, $R^q\pi_*\Omega^p_{\mathscr{M} / B}$ is equipped with the hermitian metric.
	This is called the $L^2$-metric and is denoted by $h_{L^2}$.
	The dual metric on the dual $\left( R^{q}\pi_*\Omega^{p}_{\mathscr{M} / B} \right)^{*}$ is denoted by $h_{L^2}^*$.
	By the Serre duality, there exists an isomorphism of holomorphic vector bundles
	$$
		\Phi : R^q\pi_*\Omega^p_{\mathscr{M} / B} \to \left( R^{m-q}\pi_*\Omega^{m-p}_{\mathscr{M} / B} \right)^{*}.
	$$ 
	The following two lemmas are classical. 
	
	\begin{lem}\label{l-1-4-1}
		The following identity holds:
		$$
			c_1(R^q\pi_*\Omega^p_{\mathscr{M} / B}, h_{L^2}) =-c_1(R^{m-q}\pi_*\Omega^{m-p}_{\mathscr{M} / B}, h_{L^2}) .
		$$
	\end{lem}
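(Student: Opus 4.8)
The plan is to derive the identity from the fact that the Serre duality isomorphism $\Phi$ is, fiberwise, an isometry for the $L^2$-metrics, combined with the standard behaviour of the first Chern form under dualization.

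First I would describe the $L^2$-metrics through Hodge theory. Fix $b \in B$, put $M = M_b$ and $m = \dim M$, and equip $M$ with the chosen fiberwise \K metric. Harmonic theory identifies the fibre of $R^q\pi_*\Omega^p_{\mathscr{M}/B}$ at $b$ with the space $\mathcal{H}^{p,q}(M)$ of harmonic $(p,q)$-forms, and the $L^2$-metric with $\langle \alpha, \beta \rangle_{L^2} = \int_M \alpha \wedge \bar{\star}\beta$, where $\bar{\star}\beta := \star\bar{\beta}$ and $\star$ is the Riemannian Hodge operator; here $\bar{\star}$ is conjugate-linear, carries harmonic forms to harmonic forms (as $\star$ and complex conjugation both commute with $\Delta_d$), restricts to a conjugate-linear isometry $\mathcal{H}^{m-p,m-q}(M) \to \mathcal{H}^{p,q}(M)$ since $\star$ is a pointwise isometry, and its inverse $\bar{\star}^{-1}$ enjoys the same properties. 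Likewise the fibre of $R^{m-q}\pi_*\Omega^{m-p}_{\mathscr{M}/B}$ at $b$ is $\mathcal{H}^{m-p,m-q}(M)$ with its $L^2$-metric, and under these identifications $\Phi$ sends $\alpha \in \mathcal{H}^{p,q}(M)$ to the functional $\beta \mapsto \int_M \alpha \wedge \beta$ on $\mathcal{H}^{m-p,m-q}(M)$.

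Next I would compute the dual norm of $\Phi(\alpha)$. Reordering the wedge product and using the above description of the $L^2$-pairing gives, for $\beta \in \mathcal{H}^{m-p,m-q}(M)$, the identity $\int_M \alpha \wedge \beta = (-1)^{p+q} \langle \beta, \bar{\star}^{-1}\alpha \rangle_{L^2}$, so that $\Phi(\alpha)$ is, up to the unit scalar $(-1)^{p+q}$, the linear functional represented by $\bar{\star}^{-1}\alpha$. By Cauchy--Schwarz its $h_{L^2}^{*}$-norm therefore equals $\|\bar{\star}^{-1}\alpha\|_{L^2} = \|\alpha\|_{L^2}$, the last step because $\bar{\star}^{-1}$ is an $L^2$-isometry. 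Thus the $\mathbb{C}$-linear holomorphic bundle isomorphism $\Phi$ preserves the $L^2$-norms, hence (by polarization) $\Phi^*(h_{L^2}^*) = h_{L^2}$, and consequently
$$
	c_1\!\left( R^q\pi_*\Omega^p_{\mathscr{M}/B}, h_{L^2} \right) = c_1\!\left( \left( R^{m-q}\pi_*\Omega^{m-p}_{\mathscr{M}/B} \right)^{*}, h_{L^2}^{*} \right).
$$

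Finally, applying the classical identity $c_1(E^{*}, h^{*}) = -c_1(E, h)$ for a hermitian holomorphic vector bundle (the curvature of the dual being minus the transpose of that of $E$) with $E = R^{m-q}\pi_*\Omega^{m-p}_{\mathscr{M}/B}$ and $h = h_{L^2}$ yields the asserted equality. The only point requiring genuine care is the isometry statement for $\Phi$ --- equivalently, the fact that the Hodge operator intertwines the cup-product pairing with the $L^2$-inner product --- together with the bookkeeping of the signs arising from reordering wedge products; the rest is formal. (Even a nonzero constant discrepancy in $\Phi$ would be harmless here, as it would rescale $h_{L^2}^{*}$ by a positive constant and leave $c_1$ unchanged.)
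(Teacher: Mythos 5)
Your argument is correct and is the standard proof of this fact: the paper itself gives no proof, simply labelling the lemma as classical, and your route (Serre duality realized on harmonic forms via $\beta \mapsto (-1)^{p+q}\langle \beta, \bar{\star}^{-1}\alpha\rangle_{L^2}$, so that $\Phi$ is an $L^2$-isometry onto the dual, followed by $c_1(E^*,h^*)=-c_1(E,h)$) is exactly the expected one, with the sign bookkeeping and the harmonicity of $\bar{\star}^{-1}\alpha$ on the K\"ahler fibers handled correctly.
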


	\begin{lem}\label{l-1-4-4}
		Let $M$ be a compact \K manifold of dimension $4$.
		Then the $L^2$-inner product $h_{L^2}$ on the cohomology $H^1(M, \Omega_M^p)$ $(p=1, 3)$ depends only on the choice of the K\"ahler class
		and is independent of the choice of a K\"ahler metric itself.
	\end{lem}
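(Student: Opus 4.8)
The plan is to reduce the statement to a Kähler identity relating two Kähler metrics with the same Kähler class on a compact Kähler fourfold $M$, and to show that the induced $L^2$-inner product on $H^1(M,\Omega_M^p)$ for $p=1,3$ is unchanged. By Serre duality (the isomorphism $\Phi$ of the previous subsection, or Lemma \ref{l-1-4-1} in its pointwise form), $H^1(M,\Omega_M^3)$ is dual to $H^3(M,\Omega_M^1)=\overline{H^1(M,\Omega_M^3)}$-related spaces; more directly $H^1(M,\Omega_M^3)\cong H^{3,1}(M)$ and $H^1(M,\Omega_M^1)\cong H^{1,1}(M)$, and via complex conjugation and Serre duality one reduces the case $p=3$ to the case $p=1$, so it suffices to treat $H^1(M,\Omega_M^1)=H^{1,1}(M)$.

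First I would fix the Kähler class $[\omega]\in H^{1,1}(M,\mathbb{R})$ and recall that, by Hodge theory, each class in $H^{1,1}(M)$ has a unique harmonic representative with respect to a chosen Kähler metric $\omega$, and the $L^2$-pairing of two such classes $\alpha,\beta$ is $\int_M \alpha\wedge\bar\beta\wedge \omega^{m-2}/(m-2)!$ up to the usual sign and normalization — this is the key point: the $L^2$-metric on $H^{1,1}$ is \emph{cohomological}, i.e. it can be written as $\langle[\alpha],[\beta]\rangle = \int_M \alpha\wedge\bar\beta\wedge\frac{\omega^{2}}{2!}$ for $m=4$ using \emph{any} closed representatives $\alpha,\beta$ of the two classes, because the primitive/non-primitive decomposition and the Hodge--Riemann bilinear relations express the pointwise inner product against $\omega^{m-2}$ in a way that only the de Rham classes and the class $[\omega^{m-2}]$ enter. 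The standard way to see this cleanly is: for primitive $(1,1)$-classes the Weil operator acts by a sign depending only on bidegree, so $\langle\alpha,\beta\rangle_{L^2}=-\int_M\alpha\wedge\bar\beta\wedge\omega^{m-2}/(m-2)!$, and the orthogonal complement (the $\mathbb{C}\omega$ direction) contributes a term depending only on $\int_M\omega^m$ and the components of $\alpha,\beta$ along $[\omega]$, all of which are determined by cohomology; then one checks $\int_M\alpha\wedge\bar\beta\wedge\omega^{m-2}$ agrees with this combination.

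The key step is therefore: since the right-hand side $\int_M\alpha\wedge\bar\beta\wedge\omega^{m-2}/(m-2)!$ depends on $\omega$ only through its de Rham cohomology class $[\omega]$ (Stokes' theorem, as $\alpha,\beta$ are closed), the $L^2$-inner product on $H^1(M,\Omega_M^1)$ depends only on $[\omega]$, not on the metric realizing it. For $p=3$ I would either repeat the argument with $H^1(M,\Omega_M^3)\cong H^{3,1}(M)$ — where a class has a harmonic representative $\gamma$ and $\langle\gamma,\gamma'\rangle_{L^2}=c\int_M\gamma\wedge\bar{\gamma'}$ with $\bar{\gamma'}\in H^{1,3}$, a \emph{top-degree} integrand so no power of $\omega$ even appears and the pairing is manifestly independent of the metric — or invoke the Serre-duality isomorphism $\Phi$ together with Lemma \ref{l-1-4-1} to transport the $p=1$ statement.

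I expect the main obstacle to be the bookkeeping in the $p=1$ case: making precise, via the Hodge--Riemann bilinear relations and the Lefschetz decomposition $H^{1,1}=H^{1,1}_{\mathrm{prim}}\oplus\mathbb{C}[\omega]$, that the pointwise $L^2$-pairing integrates to the cohomological quantity $\int_M\alpha\wedge\bar\beta\wedge\omega^{2}/2!$ (with the correct sign), including the contribution of the non-primitive part where one must track factors like $\int_M\omega^4$ and the projections of $\alpha,\beta$ onto $[\omega]$. Once that identity is in place, metric-independence is immediate from Stokes. I would present this as two short lemmas (the cohomological formula for the $L^2$-pairing, then the corollary) and keep the Hodge--Riemann computation to a few lines, citing a standard reference such as \cite{MR1288523} for the bilinear relations on a compact Kähler manifold.
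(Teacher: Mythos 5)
The paper offers no proof of this lemma --- it is dismissed as ``classical'' --- so there is no argument to compare against; your Hodge--Riemann/Lefschetz strategy is exactly the standard way to prove it, and for $p=1$ your outline is correct: decompose a harmonic $(1,1)$-form as $\alpha=\alpha_0+\lambda\omega$ with $\alpha_0$ primitive, use $*\alpha_0=-\tfrac{\omega^{2}}{2!}\wedge\alpha_0$ and $*\omega=\tfrac{\omega^{3}}{3!}$, note that the Lefschetz decomposition is $L^2$-orthogonal and that $\lambda$ is determined by $[\alpha]$ and $[\omega]$, and conclude that every term is a cup-product expression in $[\alpha],[\beta],[\omega]$, hence unchanged under Stokes.

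The one genuine flaw is your treatment of $p=3$. You assert that $\langle\gamma,\gamma'\rangle_{L^2}=c\int_M\gamma\wedge\bar{\gamma}'$ with a single constant $c$ because the integrand is already of top degree. This is false unless $\gamma,\gamma'$ are primitive, and on a K\"ahler $4$-fold $H^{3,1}(M)$ is \emph{not} primitive in general: the Lefschetz decomposition gives $H^{3,1}=P^{3,1}\oplus\bigl(\omega\wedge H^{2,0}\bigr)$, and the second summand is nonzero precisely in the paper's application ($M$ of $K3^{[2]}$-type, where $H^{2,0}=\mathbb{C}\eta$ and $\omega\wedge\eta\neq 0$). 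The Hodge star equals $-1$ on $P^{3,1}$ but $+1$ on $\omega\wedge H^{2,0}$, so the $L^2$-pairing is $-\int\gamma_0\wedge\bar{\gamma}'_0+\int\omega^{2}\wedge\eta\wedge\bar{\eta}'$ for $\gamma=\gamma_0+\omega\wedge\eta$, not a constant multiple of $\int\gamma\wedge\bar{\gamma}'$. The conclusion survives --- both summands and the decomposition itself are determined by the cohomology classes and $[\omega]$ --- but the proof of the $p=3$ case requires the same Lefschetz bookkeeping you already carry out for $p=1$, not the shortcut you propose. Likewise, your suggested reduction of $p=3$ to $p=1$ via Serre duality or conjugation does not work as such: those maps relate $H^{3,1}$ to $H^{1,3}$, not to $H^{1,1}$. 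Neither repair changes the method, so the proposal is correct in substance once the $p=3$ case is argued the same way as $p=1$.
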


\section{Comparison between $\tau_{M, \mathcal{K}}$ and $\tau_{BCOV}$}\label{s-2}

	In this section, we give a comparison theorem between our invariant $\tau_{M, \mathcal{K}}$ and the BCOV invariant $\tau_{BCOV}$ of Calabi-Yau 4-folds of Camere-Garbagnati-Mongardi.
	The crucial step of the proof is to show that $\tau_{M, \mathcal{K}}$ satisfies the same curvature equation as that of $\tau_{BCOV}$.

\subsection{The BCOV invariant of Calabi-Yau manifolds}\label{ss-2-0}
	
	A compact \K manifold $Z$ of dimension $m$ is a Calabi-Yau manifold if $K_Z \cong \mathcal{O}_Z$ and 
	$
		H^q(Z, \mathcal{O}_Z) =0 
	$ 
	for $0<q<m$.
	Let $h_Z$ be a \K metric on $Z$.
	Its associated \K form is denoted by $\omega_Z$.
	The hermitian metric on $\Omega^p_Z$ induced from $h_Z$ is also denoted by $h_Z$.
	Set $\overline{\Omega}^p_Z =(\Omega^p_Z, h_Z)$.
	Let $\tau(\overline{\Omega}^p_Z)$ be the analytic torsion of the holomorphic hermitian vector bundle $\overline{\Omega}^p_Z$.
	Define the BCOV torsion $\mathcal{T}_{BCOV}(Z, \omega_Z)$ by
	$$
		\mathcal{T}_{BCOV}(Z, \omega_Z) = \prod_{p \geqq 0} \tau(\overline{\Omega}^p_Z)^{(-1)^p p}.
	$$
	
	Let $\eta$ be a generator of $H^0(Z, \Omega^m_Z) \cong \mathbb{C}$.
	We set
	$$
		A(Z, \omega_Z) = \exp \left\{ -\frac{1}{12} \int_Z (\log \varphi) c_m(TZ, h_Z) \right\},
	$$
	where $\varphi$ is a smooth function on $Z$ defined by
	$$
		\varphi = \frac{i^{m^2} \eta \wedge \bar{\eta}}{\omega_Z^m / m !} \frac{1}{ \| \eta \|^2_{L^2} }.
	$$
	Note that if $h_Z$ is a Ricci-flat \K metric, then we have
	$$
		A(Z, \omega_Z) = \vol(Z, \omega_{Z})^{\frac{\chi(Z)}{12}},
	$$
	where $\chi(Z)$ is a topological Euler characteristic of $Z$.
	
	The covolume of the lattice $\operatorname{Im}(H^k(Z, \mathbb{Z}) \to H^k(Z, \mathbb{R}))$ with respect to the $L^2$-metric induced from $h_Z$ is denoted by $\vol_{L^2}(H^k(Z, \mathbb{Z}), \omega_{Z})$.
	We set
	$$
		B(Z, \omega_Z) = \prod_{k=1}^{2m} \vol_{L^2}(H^k(Z, \mathbb{Z}), \omega_{Z})^{ (-1)^{k+1} k/2 }.
	$$
	
	\begin{dfn}\label{d3-2-1-1}
		Let $Z$ be a Calabi-Yau $m$-fold
		and Let $\omega_Z$ be a \K form on $Z$.
		The BCOV invariant of $Z$ is defined by 
		$$
			\tau_{BCOV}(Z) = \frac{ A(Z, \omega_Z) }{ B(Z, \omega_Z) }\mathcal{T}_{BCOV}(Z, \omega_Z).
		$$
	\end{dfn}
	
	\begin{prop}\label{p3-2-1-2}
		The BCOV invariant $\tau_{BCOV}(Z)$ is independent of the choice of $\omega_Z$.
	\end{prop}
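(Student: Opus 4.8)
The plan is to show that each ingredient in the definition of $\tau_{BCOV}(Z)$ transforms under a conformal change of K\"ahler form in a way that the product is invariant. Fix two K\"ahler forms $\omega_Z$ and $\omega_Z'$ on $Z$; by connectedness of the K\"ahler cone it suffices to work with a smooth path $\omega_t$ joining them and to compute the variation of $\log \tau_{BCOV}(Z)$ along the path, or equivalently to use the anomaly formulas for each term. First I would recall the Bismut--Gillet--Soul\'e anomaly formula for Quillen metrics (equivalently, for Ray--Singer analytic torsion): the variation of $\log \tau(\overline{\Omega}^p_Z)$ under a change of the metric $h_Z$ (hence of $\omega_Z$) and of the fiber metric on $\Omega^p_Z$ is expressed as an integral over $Z$ of Bott--Chern secondary classes built from the Todd form of $(TZ,h_Z)$ and the Chern character form of $(\Omega^p_Z,h_Z)$. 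Summing these with the weights $(-1)^p p$ and using the identity $\sum_p (-1)^p p\,\mathrm{ch}(\Omega^p_Z) = -c_1(TZ)\,\mathrm{Td}(TZ)^{-1}\cdot(\text{something})$ — more precisely the well-known generating-function identity $\sum_p (-1)^p \mathrm{ch}(\Omega^p_Z) e^{pt}$ evaluated appropriately, which for the BCOV combination collapses the top-degree contribution to a multiple of $c_m(TZ,h_Z)$ times a function of the K\"ahler potential — one obtains that the variation of $\log \mathcal{T}_{BCOV}(Z,\omega_Z)$ is a local integral.

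Next I would treat $A(Z,\omega_Z)$. Under a change $\omega_Z \rightsquigarrow \omega_Z'$ the function $\varphi$ changes by the ratio of the volume forms and by the ratio of the $L^2$-norms $\|\eta\|^2_{L^2}$, and $c_m(TZ,h_Z)$ changes by an exact form (a $dd^c$ of a secondary class). The variation of $-\frac{1}{12}\int_Z (\log\varphi)\,c_m(TZ,h_Z)$ is then computed by differentiating under the integral sign and integrating by parts; the outcome is designed to cancel exactly the local term coming from $\mathcal{T}_{BCOV}$. This is the matching of the ``holomorphic anomaly'' against the correction term $A$, and it is the historical reason the factor $A$ was introduced. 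Then I would handle $B(Z,\omega_Z)$: the $L^2$-metric on $H^k(Z,\mathbb{R})$ depends on $\omega_Z$, but by the results recalled in \S\ref{ss-1-4} (in particular the fact that the $L^2$-inner product on $H^1(M,\Omega^p_M)$ depends only on the K\"ahler class, cf. Lemma \ref{l-1-4-4}, together with Hodge--Riemann bilinear relations) the covolumes $\vol_{L^2}(H^k(Z,\mathbb{Z}),\omega_Z)$ depend on $\omega_Z$ only through explicit powers of $\vol(Z,\omega_Z)$ and through the primitive decomposition; collecting the exponents $(-1)^{k+1}k/2$ and using $\sum_k (-1)^{k+1}k/2 \cdot b_k = $ (a multiple of $\chi(Z)$) one finds that the $\omega_Z$-dependence of $B(Z,\omega_Z)$ is again a power of $\vol(Z,\omega_Z)$, matching the Ricci-flat value $A = \vol^{\chi/12}$ in the normalization.

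The cleanest route, which I expect is the one the authors take, is to reduce everything to the Ricci-flat case: by Yau's theorem each K\"ahler class contains a unique Ricci-flat representative, so it is enough to (i) prove invariance within a fixed class, and (ii) prove invariance under scaling of the class. Step (i) follows because along a path of metrics in a fixed class the cohomological $L^2$-metrics and the function $\varphi$ vary, but the anomaly contributions cancel as above — here the key point is that $\partial\bar\partial$-exact changes of the fiber metrics produce only Bott--Chern exact contributions. Step (ii), scaling $\omega_Z \mapsto \lambda\omega_Z$, is an explicit computation: analytic torsion scales by a computable power of $\lambda$ determined by the zeta-regularized dimension, $A$ and $B$ scale by powers of $\lambda$ through the volume, and the exponents conspire to cancel; this is where the precise definition of the weights in $\mathcal{T}_{BCOV}$, $A$, and $B$ is used. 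The main obstacle is the bookkeeping in step (i): one must verify that the secondary (Bott--Chern) classes produced by the anomaly formula for $\sum_p(-1)^p p\,\tau(\overline{\Omega}^p_Z)$ exactly cancel those produced by the variation of $\log A(Z,\omega_Z)$, which requires the generating-function identity for $\sum_p (-1)^p p\,\mathrm{Td}(TZ)\mathrm{ch}(\Omega^p_Z)$ and its refinement at the level of Bott--Chern forms; this is essentially the content of the Eriksson--Freixas i Montplet--Mourougane construction, so in the write-up I would cite \cite{MR4255041}, \cite{MR4475251} for the general mechanism and only indicate the verification in the present situation.
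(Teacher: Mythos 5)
The paper offers no argument here at all: the stated proof is a citation to \cite[Proposition 5.8]{MR4255041}, so your sketch should be judged as a reconstruction of that cited proof. As such it identifies the right mechanism --- the Bismut--Gillet--Soul\'e anomaly formula for the weighted combination $\sum_p(-1)^pp\,\log\tau(\overline{\Omega}^p_Z)$, cancelled against the variation of the correction term $A$ via the generating-function identity for $\sum_p(-1)^pp\,\mathrm{Td}(TZ)\mathrm{ch}(\Omega^p_Z)$ --- and since you ultimately defer the verification to \cite{MR4255041}, \cite{MR4475251}, you land where the paper does. Two points in your outline are not right as stated, however, and you should not present them as the intended proof.

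First, the ``cleanest route'' of reducing to Ricci-flat representatives and then handling only an overall scaling $\omega_Z\mapsto\lambda\omega_Z$ is insufficient: scaling connects only proportional K\"ahler classes, and for the 4-folds $Z$ of this paper one has $h^{1,1}(Z)=h^{1,1}(X)_++h^{0,0}(X^{\iota})\geqq 2$, so two Ricci-flat metrics generally lie in non-proportional classes and your step (ii) does not reach between them. One genuinely needs the anomaly computation along an arbitrary path of K\"ahler forms, i.e.\ your first two paragraphs, not the shortcut. Second, the assertion that the $\omega_Z$-dependence of $B(Z,\omega_Z)$ is ``again a power of $\vol(Z,\omega_Z)$'' is unjustified and false for the individual covolumes: by the Hodge--Riemann relations the $L^2$-inner product on $H^k(Z,\mathbb{R})$ depends only on the K\"ahler \emph{class} (the correct generalization of Lemma \ref{l-1-4-4}), but through the Lefschetz decomposition it depends on where that class sits in $H^{1,1}$, not merely on its volume. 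The cancellation of this class-dependence against that of $\mathcal{T}_{BCOV}\cdot A$ is precisely the nontrivial content of the cited proposition; the cleanest bookkeeping, and the one used in \cite{MR4255041}, is to show that the Quillen--BCOV metric on the BCOV determinant line is metric-independent and then evaluate it on a canonical element built from the integral lattices, which is where the factor $B$ and the weights $(-1)^{k+1}k/2$ come from. With those two corrections your outline matches the cited argument.
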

	
	\begin{proof}
		See \cite[Proposition 5.8]{MR4255041}.
	\end{proof}


\subsection{Calabi-Yau 4-folds of Camere-Garbagnati-Mongardi and their BCOV invariants}\label{ss-2-1}

	Throughout this section, we fix an admissible sublattice $M$ of $L_2$ and a K\"ahler-type chamber $\mathcal{K} \in \ktm$.
	
	Let $(X, \iota)$ be a manifold of $K3^{[2]}$-type with involution of type $(M, \mathcal{K})$.
	The fixed locus is denoted by $X^{\iota}$.
	The blowup of $X$ along the fixed locus $X^{\iota}$ is denoted by $\widetilde{X}$.
	The involution $\iota : X \to X$ induces an involution $\tilde{\iota} : \widetilde{X} \to \widetilde{X}$ of $\widetilde{X}$.
	By Camere-Garbagnati-Mongardi \cite[Theorem 3.6]{MR3928256}, the quotient $\widetilde{X} / \tilde{\iota}$ is a Calabi-Yau 4-fold and we set
	$$
		Z = \widetilde{X} / \tilde{\iota}.
	$$
	
	\begin{dfn}
		The manifold $Z$ is called a Calabi-Yau manifold of Camere-Garbagnati-Mongardi.
	\end{dfn}
	
	Let $f : (\X, \iota) \to S$ be a family of $K3^{[2]}$-type manifolds with involution of type $(M, \mathcal{K})$.
	Let $X_s =f^{-1}(s)$ be the fiber at $s \in S$.
	The fixed locus of $\iota$ is denoted by $\XX$
	and the blowup of $\X$ along $\XX$ is denoted by $\tau : \widetilde{\X} =Bl_{\XX}(\X) \to \X$.
	The involution $\iota$ induces an involution $\tilde{\iota}$ of $\widetilde{\X}$
	and its quotient is denoted by $\Z = \widetilde{\X} / \tilde{\iota}$.
	The morphism induced from $\widetilde{\X} \xrightarrow{\tau} \X \xrightarrow{f} S$ is denoted by $h : \Z \to S$.
	By \cite[Theorem 3.6]{MR3928256}, each fiber $Z_s = h^{-1}(s)$ is a Calabi-Yau 4-fold and $h : \Z \to S$ is a family of Calabi-Yau 4-folds.

	Let $h_{\xs}$ be an $\iota$-invariant hermitian metric on the relative tangent bundle $T\xs$ which is fiberwise K\"ahler.
	Since $\dim H^q(X_s, \Omega^p_{X_s})$ is independent of $s \in S$, $R^qf_*\Omega^p_{\xs}$ is a locally free sheaf and we regard it as a holomorphic vector bundle on $S$.
	By Hodge theory, $R^qf_*\Omega^p_{\xs}$ is equipped with the $\iota$-invariant hermitian metric.
	This metric is called the $L^2$-metric and is denoted by $h_{L^2}$.
	
	Let $h_{\zs}$ be a hermitian metric on $T\zs$ which is fiberwise K\"ahler.
	In the same manner as above, the holomorphic vector bundle $R^qh_*\Omega^p_{\zs}$ is equipped with the $L^2$ metric,
	which is denoted by $h_{L^2}$.
	
	Define a function $\tau_{BCOV, \zs}$ on $S$ by
	$$
		\tau_{BCOV, \zs}(s) = \tau_{BCOV}(Z_s) \quad (s \in S). 
	$$
	By \cite[Proposition 5.10]{MR4255041}, $\tau_{BCOV, \zs}$ satisfies the following curvature equation on $S$:
	\begin{align}\label{al-2-1-1}
		dd^c \log \tau_{BCOV, \zs} = \sum^{8}_{k=0} (-1)^k \omega_{H^k} -\frac{\chi}{12} \omega_{WP},
	\end{align}
	where 
	\begin{align}\label{al-2-1-2}
		\omega_{H^k} = \sum_{p+q=k} p\cpq{p}{q} \quad \text{and} \quad \omega_{WP} = c_1(h_*K_{\zs}, h_{L^2}).
	\end{align}
	
	In the next subsection, we show that $\tau_{M, \mathcal{K}, \xs}$ satisfies the same equation (\ref{al-2-1-1}). 
	
	\begin{thm}\label{t-2-1-3}
		The following equation on $S$ holds:
		$$
			dd^c \log \tau_{M, \mathcal{K}, \xs} = \sum^{8}_{k=0} (-1)^k \omega_{H^k} -\frac{\chi}{12} \omega_{WP}.
		$$
	\end{thm}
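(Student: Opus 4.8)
The plan is to reduce the desired curvature identity on $S$ to the curvature identity for $\tau_{M,\mathcal{K},\xs}$ already established in Theorem \ref{p-3-4}, by matching the right-hand sides term by term. Concretely, Theorem \ref{p-3-4} gives
$$
	-dd^c \log \tau_{M, \mathcal{K}, \xs} = \frac{(t+1)(t+7)}{16} c_1(f_*K_{\xs}, h_{L^2}) +\omega_{H^{\cdot}(\xss)},
$$
so it suffices to prove the single identity of $(1,1)$-forms on $S$
$$
	\sum_{k=0}^{8} (-1)^k \omega_{H^k} -\frac{\chi}{12}\,\omega_{WP}
	= -\frac{(t+1)(t+7)}{16} c_1(f_*K_{\xs}, h_{L^2}) -\omega_{H^{\cdot}(\xss)}.
$$
The left side is built from Hodge bundles of the family $h:\Z\to S$ of Calabi-Yau $4$-folds $Z_s=\widetilde{X}_s/\tilde\iota_s$, while the right side is built from Hodge bundles of the family $f:\X\to S$ of $K3^{[2]}$-type manifolds and of the fixed-locus family $\XX\to S$. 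The bridge between the two sides is the geometry $Z_s = \widetilde{X}_s/\tilde\iota_s$ together with the blow-up $\widetilde{X}_s = \mathrm{Bl}_{X_s^{\iota_s}}X_s$.

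**Key steps.**
First I would compute the Hodge numbers and the relevant Hodge bundles of $Z_s$. Since $\widetilde{X}_s\to X_s$ is the blow-up of a $K3^{[2]}$-type $4$-fold along a smooth surface $X_s^{\iota_s}$, the cohomology of $\widetilde{X}_s$ decomposes via the standard blow-up formula into $H^\bullet(X_s)$ plus contributions from $H^\bullet(X_s^{\iota_s})$ twisted by the Chern classes of the exceptional $\mathbb{P}^1$-bundle. Taking $\tilde\iota_s$-invariants then expresses $H^\bullet(Z_s,\Omega^\bullet_{Z_s})$ in terms of the $\iota_s^*$-eigenspace decomposition of $H^\bullet(X_s,\Omega^\bullet_{X_s})$ and the (untwisted, since $\tilde\iota_s$ acts trivially on the base of the exceptional divisor) cohomology $H^\bullet(X_s^{\iota_s},\Omega^\bullet)$. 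At the level of the associated $L^2$-Hodge bundles on $S$ this becomes an identity of hermitian holomorphic vector bundles (up to isomorphisms of the Serre-duality type from Lemma \ref{l-1-4-1}), from which one reads off an identity of first Chern forms. The $\iota_s$-invariant part of $H^{p,q}(X_s)$ behaves like $f_*K_{\xs}$, $f_*\Omega^1_{\xs}$ etc., and its Chern form is controlled by $t=\operatorname{Tr}(\iota^*|_{H^{1,1}})$; the anti-invariant part and the surface part contribute $\omega_{H^{\cdot}(\xss)}$-type terms. Next I would handle the Weil-Petersson term: $\omega_{WP}=c_1(h_*K_{\zs},h_{L^2})$ and the deformation theory of $Z_s$ is governed by $H^1(Z_s,TZ_s)$, which by the crepant-resolution property and $K_{Z_s}\cong\mathcal{O}_{Z_s}$ matches the $\iota_s$-invariant deformations of $X_s$, i.e. $H^1(X_s,\Omega^1_{\xs})$ pulls $\omega_{WP}$ back to $c_1(f_*K_{\xs},h_{L^2})$ up to a constant. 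Then I would substitute the numerical value of $\chi=\chi(Z_s)$ (a constant computable from the blow-up/quotient, again ultimately a function of $t$ and $\operatorname{rk}$-type data) and collect coefficients. Finally, a bookkeeping check that all the numerical coefficients $(-1)^k$, $p$, $\chi/12$ on the left reassemble exactly into $\tfrac{(t+1)(t+7)}{16}$ and the combination defining $\omega_{H^{\cdot}(\xss)}$ in \eqref{al6-2-1-1}, using $t=\operatorname{Tr}(\iota_M)+2$.

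**Main obstacle.**
The hard part will be the linear-algebra bookkeeping: writing out $\sum_{k}(-1)^k\omega_{H^k} = \sum_k(-1)^k\sum_{p+q=k} p\,c_1(R^qh_*\Omega^p_{\zs},h_{L^2})$ for $Z_s$ a $4$-fold, expanding each $R^qh_*\Omega^p_{\zs}$ via the blow-up and the $\mu_2$-eigenspace decomposition, and then using Lemma \ref{l-1-4-1} (Serre duality), Lemma \ref{l-1-4-4}, and the vanishings $H^q(Z_s,\mathcal{O})=0$ for $0<q<4$, $H^q(X_s,\mathcal{O})=0$ for $q$ odd, to cancel the many terms that do not survive. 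One must be careful that the exceptional divisor is a $\mathbb{P}^1$-bundle over the surface $X_s^{\iota_s}$ and that $\tilde\iota_s$ acts by $-1$ on the $\mathbb{P}^1$-fibre direction, so exactly the right eigen-pieces of the exceptional contribution land in $H^\bullet(Z_s)$; getting these signs and which of $f_*\Omega^1_{\xss}$, $R^1f_*\mathcal{O}_{\XX}$, $f_*K_{\xss}$ appear (matching \eqref{al6-2-1-1}) is where an error would most easily creep in. A clean way to organize this is to prove the identity first for the "Hodge-theoretic" $(1,1)$-classes purely topologically/Hodge-theoretically (so that it reduces to an identity of integers after pairing against a test curve, cf. the structure of Theorem \ref{p2-1-2-2}), and only then upgrade to an identity of Chern forms using that both sides are $\partial\bar\partial$-closed and the $L^2$-metrics are functorial under the morphisms involved.
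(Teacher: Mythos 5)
Your overall strategy is the one the paper follows: reduce the statement to Theorem \ref{p-3-4} and prove the single identity of $(1,1)$-forms matching the two right-hand sides, using the blow-up/quotient decomposition $H^q(X,\Omega^p_X)_{+}\oplus H^{q-1}(X^{\iota},\Omega^{p-1}_{X^{\iota}})\cong H^q(Z,\Omega^p_Z)$, Serre duality, the identification $c_1(h_*K_{\zs},h_{L^2})=-c_1(R^4h_*\mathcal{O}_{\Z},h_{L^2})=c_1(f_*\Omega^4_{\xs},h_{L^2})$, and the computation of $\chi(Z)$ as a polynomial in $t$. (One small correction: $\tilde\iota$ acts on the exceptional divisor \emph{trivially}, since $-\mathrm{id}$ on the rank-two normal bundle of $X^{\iota}$ induces the identity on its projectivization; that is why the entire surface contribution, untwisted, survives into $H^\bullet(Z)$.)

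The genuine gap is in the sentence ``At the level of the associated $L^2$-Hodge bundles \dots this becomes an identity of hermitian holomorphic vector bundles \dots from which one reads off an identity of first Chern forms.'' The isomorphism $\phi:\left(R^qf_*\Omega^p_{\xs}\right)_{+}\oplus R^{q-1}f_*\Omega^{p-1}_{\xss}\to R^qh_*\Omega^p_{\zs}$ is \emph{not} an isometry for the $L^2$-metrics, so its first Chern forms differ by a $dd^c$ of a Bott--Chern secondary term $\tilde c_1(\phi)$, and this term does not vanish. The paper's way around this (Lemma \ref{l-2-2-5}) is to observe that complex conjugation of harmonic forms intertwines $\phi$ with the analogous isomorphism $\psi$ for $(q,p)$, forcing $\tilde c_1(\phi)=\tilde c_1(\psi)$; hence only the \emph{antisymmetrized} combinations $c_1(R^qh_*\Omega^p_{\zs})-c_1(R^ph_*\Omega^q_{\zs})$ decompose cleanly. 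The entire computation must then be organized so that only such differences are ever needed, which requires two further nontrivial inputs you do not mention: the flatness of the Gauss--Manin connection on $R^3h_*\mathbb{C}\otimes\mathcal{O}_S$ and on $\bigoplus_k R^kh_*\mathbb{C}\otimes\mathcal{O}_S$ (to evaluate the symmetric parts, as in Lemmas \ref{l-2-2-11} and \ref{l-2-2-26}), and the hyperk\"ahler pointwise-norm computation showing that wedging with the symplectic form gives $\left(R^1f_*\Omega^1_{\xs}\right)_{-}\otimes f_*\Omega^2_{\xs}\cong\left(R^1f_*\Omega^3_{\xs}\right)_{+}$ with an explicitly controlled metric discrepancy (Lemmas \ref{l-3-1} and \ref{l-2-2-14}). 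Finally, your suggested shortcut of proving the identity ``topologically'' and then upgrading because ``both sides are $\partial\bar\partial$-closed'' does not work: the Chern forms of the $L^2$-metrics on Hodge bundles are not closed in general, and an equality of cohomology classes would in any case not imply the required equality of forms.
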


\subsection{Proof of Theorem \ref{t-2-1-3}}\label{ss-2-2}

	\begin{lem}\label{l-2-2-1}
		The following identity holds:
		$$
			\sum^8_{k=0} (-1)^k \omega_{H^k} = 2\sum^3_{q=1} (-1)^q c_1( R^qh_* \Omega^1_{\zs}, h_{L^2} ) -4 \left\{ c_1( h_* \mathcal{O}_{\Z}, h_{L^2} ) +c_1( R^4h_*\mathcal{O}_{\Z}, h_{L^2} ) \right\}.
		$$
	\end{lem}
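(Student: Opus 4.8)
The plan is to expand the left-hand side into a double sum over Hodge bidegrees and then collapse it using only two inputs: the Serre-duality identity of Lemma \ref{l-1-4-1} and the Calabi-Yau vanishing $H^q(Z_s,\mathcal{O}_{Z_s})=0$ for $0<q<4$. Abbreviate $\omega^{p,q} := \cpq{p}{q}$; with $m=4$, Lemma \ref{l-1-4-1} reads
\[
\omega^{p,q}=-\omega^{4-p,4-q}\qquad(0\le p,q\le 4).
\]
First I would unwind the definition (\ref{al-2-1-2}) of $\omega_{H^k}$ to get
\[
\sum_{k=0}^{8}(-1)^k\omega_{H^k}
=\sum_{k=0}^{8}(-1)^k \sum_{p+q=k} p\,\omega^{p,q}
=\sum_{p,q=0}^{4}(-1)^{p+q}\,p\,\omega^{p,q}.
\]

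Then I would discard the vanishing direct images. Since $Z_s$ is Calabi-Yau, $R^qh_*\mathcal{O}_{\Z}=0$ for $q=1,2,3$, so $\omega^{0,q}=0$ there, and by the Serre-duality relation above $\omega^{4,q}=0$ for $q=1,2,3$ as well. Likewise Hodge symmetry gives $h^{p,0}(Z_s)=h^{0,p}(Z_s)=0$ for $p=1,2,3$, so $\omega^{p,0}=0$, and again Serre duality gives $\omega^{p,4}=0$ for $p=1,2,3$. Thus only the four corners $\omega^{0,0},\omega^{0,4},\omega^{4,0},\omega^{4,4}$ and the interior block $\{\omega^{p,q}:1\le p,q\le 3\}$ survive in the double sum.

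Next I would evaluate the two surviving pieces. For the corners, the $p=0$ terms drop out because of the weight $p$, and the $p=4$ terms contribute $4\omega^{4,0}+4\omega^{4,4}$; rewriting via $\omega^{4,0}=-\omega^{0,4}$ and $\omega^{4,4}=-\omega^{0,0}$ turns this into exactly $-4\bigl(c_1(h_*\mathcal{O}_{\Z},h_{L^2})+c_1(R^4h_*\mathcal{O}_{\Z},h_{L^2})\bigr)$, the second term on the right-hand side. For the interior block, the relations $\omega^{p,q}=-\omega^{4-p,4-q}$ (in particular $\omega^{2,2}=0$, $\omega^{2,3}=-\omega^{2,1}$, and $\omega^{3,q}=-\omega^{1,4-q}$) let me re-express everything through $\omega^{1,1},\omega^{1,2},\omega^{1,3}$: a short computation shows the $p=2$ row contributes $0$, the $p=1$ row contributes $\omega^{1,1}-\omega^{1,2}+\omega^{1,3}$, and the $p=3$ row contributes $-3(\omega^{1,1}-\omega^{1,2}+\omega^{1,3})$, so the block sums to $-2(\omega^{1,1}-\omega^{1,2}+\omega^{1,3})=2\sum_{q=1}^{3}(-1)^q\omega^{1,q}$, the first term on the right-hand side. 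Adding the two pieces gives the claimed identity.

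I do not expect a genuine obstacle here: the argument is linear bookkeeping, and the only points requiring care are (i) killing the first row and first column, for which one genuinely needs both the Calabi-Yau vanishing $H^q(\mathcal{O})=0$ and ordinary Hodge symmetry $h^{p,0}=h^{0,p}$, and (ii) keeping the sign in Lemma \ref{l-1-4-1} straight. Note that the identity is an equality of smooth $(1,1)$-forms on $S$ valid for any fiberwise Kähler metric on $T\zs$, since each $\omega^{p,q}$ is a Chern form of an $L^2$-metric; no appeal to Lemma \ref{l-1-4-4} is needed at this stage.
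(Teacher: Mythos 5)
Your proposal is correct and follows essentially the same route as the paper: it invokes exactly the same inputs (the Calabi--Yau vanishing of $R^qh_*\Omega^p_{\zs}$ in the border bidegrees and the Serre-duality relation $\cpq{p}{q}=-\cpq{4-p}{4-q}$ of Lemma \ref{l-1-4-1}, including $\cpq{2}{2}=0$), and your reorganization of the sum over $(p,q)$ instead of over $k$ is only a cosmetic difference from the paper's term-by-term computation of each $\omega_{H^k}$. The resulting coefficients agree with the paper's.
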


	\begin{proof}
		Since each fiber $Z_s$ is a Calabi-Yau 4-fold, we have
		\begin{align}\label{al-2-2-2}
			R^qh_*\Omega^p_{\zs} =0
		\end{align}
		for $(p,q) = (1, 0), (2, 0), (3, 0), (0, 1), (0, 2), (0, 3), (1, 4), (2, 4), (3, 4), (4, 1), (4, 2), (4, 3)$.
		By (\ref{al-2-1-2}) and (\ref{al-2-2-2}), we have
		\begin{align}\label{al-2-2-3}
		\begin{aligned}
			\omega_{H^0} &=0, \quad
			\omega_{H^1} = 0, \quad
			\omega_{H^2} = \cpq{1}{1} , \\
			\omega_{H^3} &= \cpq{1}{2} +2\cpq{2}{1}.
		\end{aligned}
		\end{align}
		By the Serre duality, we have $\cpq{2}{2} = -\cpq{2}{2}$.
		Therefore
		\begin{align}\label{al3-2-2-1}
			\cpq{2}{2} =0.
		\end{align}
		By (\ref{al-2-1-2}), (\ref{al-2-2-2}), (\ref{al3-2-2-1}) and by the Serre duality, we have 
		\begin{align}\label{al-2-2-4}
		\begin{aligned}
			\omega_{H^4} &= \cpq{1}{3} +2\cpq{2}{2} +3\cpq{3}{1} +4\cpq{4}{0} \\
						&= -2\cpq{1}{3} -4 \cpq{0}{4},\\
			\omega_{H^5} &= 2\cpq{2}{3} +3\cpq{3}{2}\\
						&= -2\cpq{2}{1} -3\cpq{1}{2}, \\
			\omega_{H^6} &= 3\cpq{3}{3}
						= -3\cpq{1}{1}, \\
			\omega_{H^7} &= 0, \quad
			\omega_{H^8} = 4\cpq{4}{4} =-4c_1(h_* \mathcal{O}_{\Z}, h_{L^2}).
		\end{aligned}
		\end{align}
		By (\ref{al-2-2-3}) and (\ref{al-2-2-4}), we obtain the desired result.
	\end{proof}

	Let $X$ be a fiber of $f : \X \to S$.
	The blowup of $X$ along the fixed locus $X^{\iota}$ is denoted by $\tau : \widetilde{X} \to X$.
	The involution of $\widetilde{X}$ induced from $\iota$ is denoted by $\tilde{\iota}$.
	Then the quotient $Z := \widetilde{X} / \tilde{\iota}$ is a fiber of $h : \Z \to S$.
	Let $E$ be the exceptional divisor and let $j : E \hookrightarrow \widetilde{X}$ be the inclusion.
	Let $H^q(X, \Omega^p_X)_{\pm}$ and $H^q(\widetilde{X}, \Omega^p_{\widetilde{X}})_{\pm}$ be the $(\pm 1)$-eigenspaces of $H^q(X, \Omega^p_X)$ and $H^q(\widetilde{X}, \Omega^p_{\widetilde{X}})$ with respect to the $\iota$ and $\tilde{\iota}$-action, respectively.
	By \cite[Theorem 7.31]{MR2451566}, we have
	\begin{align}\label{al2-2-2-1}
		\tau^* \oplus j_* (\tau|_E)^* :  H^q(X, \Omega^p_X) \oplus H^{q-1}(X^{\iota}, \Omega^{p-1}_{X^{\iota}}) \cong H^q(\widetilde{X}, \Omega^p_{\widetilde{X}}),
	\end{align}
	where $j_* : H^{q-1}(E, \Omega^{p-1}_{E}) \to H^q(\widetilde{X}, \Omega^p_{\widetilde{X}})$ is the Gysin map.
	Since $\tau : \widetilde{X} \to X$ is a $\G$-equivariant map with respect to $\iota$ and $\tilde{\iota}$ 
	and since $\G$ acts trivially on $X^{\iota}$ and $E$,
	the isomorphism (\ref{al2-2-2-1}) induces the isomorphism of eigenspaces
	\begin{align}\label{al2-2-2-2}
		 H^q(X, \Omega^p_X)_{\pm} \oplus H^{q-1}(X^{\iota}, \Omega^{p-1}_{X^{\iota}}) \cong H^q(\widetilde{X}, \Omega^p_{\widetilde{X}})_{\pm}.
	\end{align}
	Note that the pullback of the projection $\widetilde{X} \to Z$ induces an isomorphism
	\begin{align}\label{al2-2-2-3}
		 H^q(\widetilde{X}, \Omega^p_{\widetilde{X}})_{+} \cong H^q(Z, \Omega^p_Z).
	\end{align}
	By (\ref{al2-2-2-2}) and (\ref{al2-2-2-3}), we have the canonical isomorphism
	\begin{align}\label{al2-2-2-4}
		 H^q(X, \Omega^p_X)_{+} \oplus H^{q-1}(X^{\iota}, \Omega^{p-1}_{X^{\iota}}) \cong H^q(Z, \Omega^p_Z).
	\end{align}
	(See \cite[proof of Theorem 4.1]{MR3928256}.)

	Let $\left( R^qf_*\Omega^p_{\xs} \right)_{\pm}$ be the $(\pm 1)$-eigenbundles of $R^qf_*\Omega^p_{\xs}$.
	By (\ref{al2-2-2-4}), we have an isomorphism of holomorphic vector bundles
	$$
		\phi : \left( R^qf_*\Omega^p_{\xs} \right)_{+} \oplus R^{q-1}f_*\Omega^{p-1}_{\xss} \cong R^qh_*\Omega^p_{\zs}.
	$$
	Similarly, we have an isomorphism of holomorphic vector bundles
	$$
		\psi : \left( R^pf_*\Omega^q_{\xs} \right)_{+} \oplus R^{p-1}f_*\Omega^{q-1}_{\xss} \cong R^ph_*\Omega^q_{\zs}.
	$$

	In the category of $C^{\infty}$ complex vector bundles on $S$, we can identify $R^qf_*\Omega^p_{\xs}$ with the $C^{\infty}$ complex vector bundle whose fiber is the space of harmonic $(p,q)$-forms.
	Similarly, $R^{q-1}f_*\Omega^{p-1}_{\xss}$ and $R^qh_*\Omega^p_{\zs}$ can be identified with $C^{\infty}$ complex vector bundles whose fiber is the space of harmonic forms.
	Let $c : \left( R^qf_*\Omega^p_{\xs} \right)_{+} \to \left( R^pf_*\Omega^q_{\xs} \right)_{+}$, $c : R^{q-1}f_*\Omega^{p-1}_{\xss} \to R^{p-1}f_*\Omega^{q-1}_{\xss}$ and $c: R^qh_*\Omega^p_{\zs} \to R^ph_*\Omega^q_{\zs}$ be the homomorphisms of $C^{\infty}$ complex vector bundles induced from the complex conjugation of harmonic forms.
	Then the following diagram of $C^{\infty}$ complex vector bundles commutes:
	\begin{align}\label{al-2-2-5}
		\xymatrix{
			\left( R^qf_*\Omega^p_{\xs} \right)_{+} \oplus R^{q-1}f_*\Omega^{p-1}_{\xss}  \ar[r]^{\hspace{50pt}  \phi} \ar[d]_{c} & R^qh_*\Omega^p_{\zs} \ar[d]^{c}   \\
			 \left( R^pf_*\Omega^q_{\xs} \right)_{+} \oplus R^{p-1}f_*\Omega^{q-1}_{\xss} \ar[r]^{\hspace{50pt}  \psi} & R^ph_*\Omega^q_{\zs} .
		}
	\end{align}

	\begin{lem}\label{l-2-2-5}
		The following identity of $(1,1)$-forms on $S$ holds:
		\begin{align*}
			&\quad \cpq{p}{q} -\cpq{q}{p} \\
			&= c_1 \left( \left( R^qf_*\Omega^p_{\xs} \right)_{+}, h_{L^2} \right) -c_1 \left(  \left( R^pf_*\Omega^q_{\xs} \right)_{+}, h_{L^2} \right) \\
			&\quad +c_1( R^{q-1}f_*\Omega^{p-1}_{\xss} , h_{L^2}) -c_1( R^{p-1}f_*\Omega^{q-1}_{\xss} , h_{L^2}).
		\end{align*}
	\end{lem}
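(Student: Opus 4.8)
The plan is to use the holomorphic bundle isomorphisms $\phi$ and $\psi$ to rewrite each side of the asserted identity as $dd^c$ of an explicit positive function on $S$, and then to show that these two functions coincide by exploiting the commutative diagram (\ref{al-2-2-5}).

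First I would record two formal facts about first Chern forms: for a holomorphic vector bundle $\mathcal{E}$ on $S$ carrying two hermitian metrics $g,g'$ one has $c_1(\mathcal{E},g)-c_1(\mathcal{E},g')=-dd^c\log(\det g/\det g')$, where $\det g/\det g'$ is the globally defined positive function given by the ratio of the induced metrics on $\det\mathcal{E}$; and $c_1(\mathcal{E}\oplus\mathcal{F},g\oplus g')=c_1(\mathcal{E},g)+c_1(\mathcal{F},g')$. Transporting the metric $h_{L^2}\oplus h_{L^2}$ on $\left(R^qf_*\Omega^p_{\xs}\right)_{+}\oplus R^{q-1}f_*\Omega^{p-1}_{\xss}$ to $R^qh_*\Omega^p_{\zs}$ via $\phi$ and comparing it there with the $L^2$-metric, these facts give
$$
\cpq{p}{q}=c_1\big(\left(R^qf_*\Omega^p_{\xs}\right)_{+},h_{L^2}\big)+c_1(R^{q-1}f_*\Omega^{p-1}_{\xss},h_{L^2})-dd^c\log\theta_{p,q},
$$
where $\theta_{p,q}>0$ is the function on $S$ which, computed in a local holomorphic frame of $\left(R^qf_*\Omega^p_{\xs}\right)_{+}\oplus R^{q-1}f_*\Omega^{p-1}_{\xss}$ carried to $R^qh_*\Omega^p_{\zs}$ by $\phi$, equals the Gram determinant of the $L^2$-metric on $R^qh_*\Omega^p_{\zs}$ divided by the product of the Gram determinants of the $L^2$-metrics on the two summands. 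Since $\phi$ is holomorphic, $\theta_{p,q}$ is independent of the chosen frame. Replacing $\phi$ by $\psi$ yields the same identity with the roles of $(p,q)$ and $(q,p)$ exchanged and $\theta_{p,q}$ replaced by $\theta_{q,p}$.

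The heart of the matter is to prove $\theta_{p,q}=\theta_{q,p}$. For this I would use that complex conjugation of harmonic forms induces, for each of the three bundles occurring above, a conjugate-linear isometry onto its $(q,p)$-counterpart for the relevant $L^2$-metrics — this holds because $\langle\bar\alpha,\bar\beta\rangle_{L^2}=\overline{\langle\alpha,\beta\rangle_{L^2}}$ for harmonic forms on a compact K\"ahler manifold — and that diagram (\ref{al-2-2-5}) intertwines $\phi$ with $\psi$ through these conjugations $c$, which act diagonally on the source. Given a local holomorphic frame of $\left(R^qf_*\Omega^p_{\xs}\right)_{+}\oplus R^{q-1}f_*\Omega^{p-1}_{\xss}$, its image under $c$ is an \emph{anti}holomorphic frame of $\left(R^pf_*\Omega^q_{\xs}\right)_{+}\oplus R^{p-1}f_*\Omega^{q-1}_{\xss}$; passing to an honest holomorphic frame introduces a smooth change-of-frame matrix, but the resulting factor $|\det(\cdot)|^2$ enters the numerator of $\theta_{q,p}$ — via $\psi$ and the commutativity of (\ref{al-2-2-5}) — and its denominator in exactly the same manner, so it cancels. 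Since Gram determinants of hermitian metrics are real and unchanged under complex conjugation, what remains is precisely the expression defining $\theta_{p,q}$. Subtracting the $(q,p)$-identity from the $(p,q)$-identity then makes the terms $dd^c\log\theta_{p,q}$ and $dd^c\log\theta_{q,p}$ cancel, and the surviving equality is exactly the statement of the lemma, since $\cpq{p}{q}=c_1(R^qh_*\Omega^p_{\zs},h_{L^2})$ by definition.

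The only delicate step is the comparison $\theta_{p,q}=\theta_{q,p}$: one must keep careful track of the conjugate-linearity (holomorphic versus antiholomorphic frames) and verify that the auxiliary change-of-frame determinants genuinely cancel between the two direct summands and the total bundle. Here the precise shape of diagram (\ref{al-2-2-5}) — in particular that $c$ acts diagonally on the source and that $\phi,\psi$ are genuine isomorphisms — is what makes the cancellation work; everything else is routine bookkeeping with $c_1$-forms.
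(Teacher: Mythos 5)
Your proposal is correct and follows essentially the same route as the paper: your $\theta_{p,q}$ and $\theta_{q,p}$ are exactly the exponentials of the Bott--Chern secondary forms $\tilde{c}_1(\phi)$ and $\tilde{c}_1(\psi)$ in the paper's proof, and your cancellation of the change-of-frame determinant between numerator and denominator is precisely the paper's computation showing $|\det C|^2$ drops out of both Gram-determinant ratios via the commutative diagram (\ref{al-2-2-5}). No gaps.
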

	
	\begin{proof}
		Let $\alpha_1, \dots, \alpha_r$ be a local holomorphic frame of $\left( R^qf_*\Omega^p_{\xs} \right)_{+} \oplus R^{q-1}f_*\Omega^{p-1}_{\xss}$ and let $\beta_1, \dots, \beta_r$ be a local holomorphic frame of $\left( R^pf_*\Omega^q_{\xs} \right)_{+} \oplus R^{p-1}f_*\Omega^{q-1}_{\xss}$.
		We define smooth functions $\tilde{c}_1(\phi)$ and $\tilde{c}_1(\psi)$ by
		\begin{align}\label{al-2-2-6}
			\tilde{c}_1(\phi) = \log \frac{\det (\langle \phi( \alpha_i), \phi( \alpha_j ) \rangle_{L^2}) }{\det (\langle  \alpha_i,  \alpha_j  \rangle_{L^2})} \quad \text{and} \quad \tilde{c}_1(\psi) = \log \frac{\det (\langle \phi( \beta_i), \phi( \beta_j ) \rangle_{L^2}) }{\det (\langle  \beta_i,  \beta_j  \rangle_{L^2})}
		\end{align}
		Then $\tilde{c}_1(\phi)$ and $\tilde{c}_1(\psi)$ are independent of the choices of holomorphic frames $\alpha_1, \dots, \alpha_r$ and $\beta_1, \dots, \beta_r$
		and are globally defined on $S$.
		Note that $\tilde{c}_1(\phi)$ and $\tilde{c}_1(\psi)$ are the Bott-Chern secondary forms such that 
		\begin{align}\label{al3-2-2-2}
		\begin{aligned}
			\cpq{p}{q} -c_1 \left( \left( R^qf_*\Omega^p_{\xs} \right)_{+}, h_{L^2} \right) -c_1( R^{q-1}f_*\Omega^{p-1}_{\xss} ,h_{L^2}) &= -dd^c \tilde{c}_1(\phi) \\
			\cpq{q}{p} -c_1 \left( \left( R^pf_*\Omega^q_{\xs} \right)_{+}, h_{L^2} \right) -c_1( R^{p-1}f_*\Omega^{q-1}_{\xss} ,h_{L^2}) &= -dd^c \tilde{c}_1(\psi) 
		\end{aligned}
		\end{align}
		
		Note that $c(\alpha_1), \dots, c(\alpha_r)$ are local $C^{\infty}$ sections of $\left( R^pf_*\Omega^q_{\xs} \right)_{+} \oplus R^{p-1}f_*\Omega^{q-1}_{\xss}$ and are not holomorphic sections.
		Similarly, $c( \phi(\alpha_1) ), \dots, c( \phi(\alpha_r) )$ are local $C^{\infty}$ sections of  $R^ph_*\Omega^q_{\zs}$ and are not holomorphic sections.
		Let $C = (c_{ij})$ be a $r \times r$-matrix defined by 
		\begin{align}\label{al-2-2-7}
			c(\alpha_i) = \sum^r_{j=1} c_{ij} \beta_j.
		\end{align}
		Then $c_{ij}$ are $C^{\infty}$ functions.
		By the above commutative diagram (\ref{al-2-2-5}), we have
		\begin{align}\label{al-2-2-8}
			c( \phi( \alpha_i ) ) = \psi( c( \alpha_i ) ) = \sum^r_{j=1} c_{ij} \psi( \beta_j ).
		\end{align}
		By (\ref{al-2-2-7}), we have
		$$
			{}^t (\langle \alpha_i, \alpha_j \rangle_{L^2}) = ( \overline{ \langle \alpha_i, \alpha_j \rangle_{L^2} }) = (\langle c(\alpha_i), c(\alpha_j) \rangle_{L^2}) = C \cdot (\langle  \beta_i,  \beta_j  \rangle_{L^2}) \cdot {}^t\overline{C}.
		$$
		Therefore
		\begin{align}\label{al-2-2-9}
			\det (\langle \alpha_i, \alpha_j \rangle_{L^2}) =  | \det C |^2  \cdot  \det (\langle  \beta_i,  \beta_j  \rangle_{L^2}).
		\end{align}
		Similarly, by (\ref{al-2-2-8}), we have 
		\begin{align}\label{al-2-2-10}
			\det (\langle \phi(\alpha_i), \phi(\alpha_j) \rangle_{L^2}) =  | \det C |^2   \cdot  \det (\langle  \psi(\beta_i),  \psi(\beta_j)  \rangle_{L^2}) .
		\end{align}
		By (\ref{al-2-2-6}), (\ref{al-2-2-9}) and (\ref{al-2-2-10}), we have
		\begin{align}\label{al2-2-2-5}
		\begin{aligned}
			\tilde{c}_1(\phi) &= \log \frac{\det (\langle \phi( \alpha_i), \phi( \alpha_j ) \rangle_{L^2}) }{\det (\langle  \alpha_i,  \alpha_j  \rangle_{L^2})} \\
			&= \log \frac{\det (\langle \phi( \beta_i), \phi( \beta_j ) \rangle_{L^2}) }{\det (\langle  \beta_i,  \beta_j  \rangle_{L^2})} 
			= \tilde{c}_1(\psi) .
		\end{aligned}
		\end{align}
		By (\ref{al3-2-2-2}) and (\ref{al2-2-2-5}), we have
		\begin{align*}
			&\quad \cpq{p}{q} -c_1 \left( \left( R^qf_*\Omega^p_{\xs} \right)_{+}, h_{L^2} \right) -c_1( R^{q-1}f_*\Omega^{p-1}_{\xss} ,h_{L^2}) \\
			&= \Bigl. -dd^c \tilde{c}_1(\phi) \Bigr. 
			= \Bigl. -dd^c \tilde{c}_1(\psi) \Bigr. \\
			&= \cpq{q}{p} -c_1 \left( \left( R^pf_*\Omega^q_{\xs} \right)_{+}, h_{L^2} \right) -c_1( R^{p-1}f_*\Omega^{q-1}_{\xss} ,h_{L^2}),
		\end{align*}
		which completes the proof.
	\end{proof}

	\begin{lem}\label{l-2-2-11}
		The following identities hold:
		\begin{align*}
			\cpq{1}{1} &= c_1( R^2h_*\mathbb{C} \otimes \mathcal{O}_S, h_{L^2} ), \\ 
			\cpq{1}{2} &= \frac{1}{2} \left\{ c_1( R^{1}f_*\mathcal{O}_{\XX} , h_{L^2}) -c_1( f_*\Omega^{1}_{\xss} , h_{L^2}) \right\} +\frac{1}{2} c_1( R^3h_*\mathbb{C} \otimes \mathcal{O}_S, h_{L^2} ).
		\end{align*}
	\end{lem}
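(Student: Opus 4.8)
The plan is to reduce both identities to one structural fact about the family $h\colon\Z\to S$ of Calabi--Yau $4$-folds, namely that the first Chern form of the Hodge bundle splits over the Hodge decomposition,
\[
	c_1\bigl(R^kh_*\mathbb{C}\otimes\mathcal{O}_S,\,h_{L^2}\bigr)\;=\;\sum_{p+q=k}\cpq{p}{q}\qquad(0\le k\le 8),
\]
and then to read off the two assertions by discarding the summands that vanish for dimension reasons and rewriting the survivors via Lemma \ref{l-2-2-5}.

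First I would establish the displayed splitting. By Griffiths transversality the Hodge filtration $F^{\bullet}$ is a filtration of $R^kh_*\mathbb{C}\otimes\mathcal{O}_S$ by holomorphic subbundles with $\operatorname{gr}^p_F\cong R^{k-p}h_*\Omega^p_{\zs}$, and the first Chern form is \emph{exactly} additive along short exact sequences of holomorphic Hermitian bundles (choose a holomorphic frame adapted to the subbundle; the Gram determinant factors as the Gram determinant of the subbundle times the Schur complement, which is the Gram determinant of the quotient metric). So it suffices to check that the subbundle and quotient metrics induced by $h_{L^2}$ on the graded pieces agree with the $L^2$-metrics on $R^{k-p}h_*\Omega^p_{\zs}$; since those metrics are by definition the $L^2$-norms on harmonic $(p,k-p)$-forms, this follows from the fact that the bidegree decomposition of harmonic $k$-forms is $L^2$-orthogonal (Hodge star), so $F^p=\bigoplus_{p'\ge p}\mathcal{H}^{p',q'}$ splits orthogonally and $F^p/F^{p+1}$ is isometric onto the space of harmonic $(p,k-p)$-forms. (If $h_{L^2}$ on $R^kh_*\mathbb{C}\otimes\mathcal{O}_S$ is normalized with the Weil operator or a polarization form, the two metrics differ on each bidegree piece by a constant, which does not affect $c_1$.) This input can alternatively be quoted from the setup of \cite{MR4255041}.

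With this in hand the rest is bookkeeping. For $k=2$, (\ref{al-2-2-2}) kills every summand except $\cpq{1}{1}$, so $c_1(R^2h_*\mathbb{C}\otimes\mathcal{O}_S,h_{L^2})=\cpq{1}{1}$, which is the first identity. For $k=3$ only $\cpq{2}{1}$ and $\cpq{1}{2}$ survive, so $c_1(R^3h_*\mathbb{C}\otimes\mathcal{O}_S,h_{L^2})=\cpq{2}{1}+\cpq{1}{2}$. I would then apply Lemma \ref{l-2-2-5} with $(p,q)=(1,2)$: the terms $(R^1f_*\Omega^2_{\xs})_{\pm}$ and $(R^2f_*\Omega^1_{\xs})_{\pm}$ drop out because $H^{2,1}(X_s)=H^{1,2}(X_s)=0$ for a manifold of $K3^{[2]}$-type (its odd Betti numbers vanish), leaving $\cpq{1}{2}-\cpq{2}{1}=c_1(R^1f_*\mathcal{O}_{\XX},h_{L^2})-c_1(f_*\Omega^1_{\xss},h_{L^2})$. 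Adding this to $c_1(R^3h_*\mathbb{C}\otimes\mathcal{O}_S,h_{L^2})=\cpq{2}{1}+\cpq{1}{2}$ and solving for $\cpq{1}{2}$ yields exactly the second identity.

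The step I expect to be delicate is the metric bookkeeping inside the splitting of $c_1$ for the Hodge bundle: one must be careful that $h_{L^2}$ on $R^kh_*\mathbb{C}\otimes\mathcal{O}_S$ induces, up to constants, the $L^2$-metrics on all of the graded pieces of the Hodge filtration, so that the exact additivity of $c_1$ really produces $\sum_{p+q=k}\cpq{p}{q}$ on the nose. Everything after that — the vanishings of the relevant $R^qh_*\Omega^p_{\zs}$, the vanishings of $H^{2,1}(X_s)$ and $H^{1,2}(X_s)$, and the single application of Lemma \ref{l-2-2-5} — is routine.
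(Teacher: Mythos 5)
Your proposal is correct and follows essentially the same route as the paper: the paper also derives $c_1(R^3h_*\mathbb{C}\otimes\mathcal{O}_S,h_{L^2})=\cpq{1}{2}+\cpq{2}{1}$ from the Hodge filtration with $\mathcal{F}^1=\mathcal{F}^0$ and $\mathcal{F}^3=0$ together with additivity of $c_1$ along the resulting short exact sequences, and then combines this with Lemma \ref{l-2-2-5} for $(p,q)=(1,2)$ and the vanishing of $R^qf_*\Omega^p_{\xs}$ in odd total degree to solve for $\cpq{1}{2}$. Your extra care about the $L^2$-orthogonality of the bidegree decomposition of harmonic forms is exactly the (implicit) justification for the paper's equation relating $c_1(\mathcal{F}^p)-c_1(\mathcal{F}^{p+1})$ to $c_1$ of the graded pieces, so nothing is missing.
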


	\begin{proof}
		By (\ref{al-2-2-2}), the first identity holds.
		Since $R^3h_*\mathbb{C} \otimes \mathcal{O}_S$ is endowed with the Hodge filtration
		$
			R^3h_*\mathbb{C} \otimes \mathcal{O}_S = \mathcal{F}^0 \supset \mathcal{F}^1 \supset \mathcal{F}^2 \supset \mathcal{F}^3 \supset \mathcal{F}^4=0 
		$
		with
		$
			 \mathcal{F}^p /  \mathcal{F}^{p+1} = R^{3-p}h_* \Omega^p_{\zs},
		$
		we deduce from (\ref{al-2-2-2}) that
		\begin{align}\label{al2-2-2-6}
			 \mathcal{F}^1 = \mathcal{F}^0=R^3h_*\mathbb{C} \otimes \mathcal{O}_S, \quad  \mathcal{F}^3 = \mathcal{F}^4=0
		\end{align}
		By the short exact sequence
		$
			0 \to \mathcal{F}^{p+1} \to \mathcal{F}^p \to R^{3-p}h_* \Omega^p_{\zs} \to 0,
		$
		we have
		\begin{align}\label{al2-2-2-7}
		\begin{aligned}
			c_1(\mathcal{F}^1, h_{L^2}) -c_1(\mathcal{F}^2, h_{L^2}) &= c_1(R^2h_* \Omega^1_{\zs}, h_{L^2}), \\
			 c_1(\mathcal{F}^2, h_{L^2}) -c_1(\mathcal{F}^3, h_{L^2}) &= c_1(R^1h_* \Omega^2_{\zs}, h_{L^2}) .
		\end{aligned}
		\end{align}
		By (\ref{al2-2-2-6}) and (\ref{al2-2-2-7}),
		we have
		\begin{align}\label{al-2-2-12}
			c_1( R^3h_*\mathbb{C} \otimes \mathcal{O}_S, h_{L^2} ) =\cpq{1}{2} + \cpq{2}{1}.
		\end{align}
		Since the fibers of $f : \X \to S$ are manifolds of $K3^{[2]}$-type, we have
		\begin{align}\label{al-2-2-13}
			R^qf_*\Omega^p_{\xs} =0
		\end{align}
		for $p, q$ with $p+q= 1, 3, 5 \text{ or } 7$.
		Then we get
		\begin{align*}
			&\quad \cpq{1}{2} \\
			&= \frac{1}{2} \left\{ \cpq{1}{2} -\cpq{2}{1} \right\} +\frac{1}{2} c_1( R^3h_*\mathbb{C} \otimes \mathcal{O}_S, h_{L^2} ) \\
			&= \frac{1}{2} \left\{ c_1( R^2f_*\Omega^1_{\xs}, h_{L^2} ) -c_1( R^1f_*\Omega^2_{\xs}, h_{L^2} ) \right\} \\
			&\quad +\frac{1}{2} \left\{ c_1( R^{1}f_*\mathcal{O}_{\XX} , h_{L^2}) -c_1( f_*\Omega^{1}_{\xss} , h_{L^2}) \right\} 
			+\frac{1}{2} c_1( R^3h_*\mathbb{C} \otimes \mathcal{O}_S, h_{L^2} ) \\
			&=\frac{1}{2} \left\{ c_1( R^{1}f_*\mathcal{O}_{\XX} , h_{L^2}) -c_1( f_*\Omega^{1}_{\xss} , h_{L^2}) \right\} 
			+\frac{1}{2} c_1( R^3h_*\mathbb{C} \otimes \mathcal{O}_S, h_{L^2} ),
		\end{align*}
		where the first equality follows from (\ref{al-2-2-12}),
		the second equality follows from Lemma \ref{l-2-2-5},
		and the last equality follows from (\ref{al-2-2-13}).
		This completes the proof.
	\end{proof}

	Let $h_{X,0}$ be an $\iota$-invariant Ricci-flat \K metric on $X$
	with \K form $\omega_{X,0}$.
	It is locally written as
	$$
		\omega_{X,0} = \frac{i}{2} \sum_{j,k} h_{X,0} \left( \frac{\partial}{\partial z^j}, \frac{\partial}{\partial z^k} \right) dz^j \wedge d\bar{z}^k .
	$$
	The Riemannian metric associated with $h_{X,0}$ is denoted by $g$.
	The hermitian metric on $\wedge^{p,q} T^*X$ attached to the Ricci-flat \K metric is also donoted by $h_{X,0}$
	
	Let $I$ be the complex structure of $X$.
	Since $(X,g)$ is hyperk\"ahler, there are complex structures $J$ and $K$ of $X$ such that $(X, I, g)$, $(X, J, g)$ and $(X, K, g)$ are manifolds of $K3^{[2]}$-type and 
	$
		IJ=-JI=K.
	$
	The \K forms with respect to $J$ and $K$ are given by $\omega_J =g(-, J(-))$ and $\omega_K =g(-, K(-))$, respectively.
	Set $\sigma_I = \omega_J +i \omega_K$.
	This is a holomorphic 2-form on $X$ (cf. \cite[\S 23]{MR1963559}).
	Since $H^0(X, \Omega_X^2) =\mathbb{C} \eta$, there exists a complex number $\lambda \in \mathbb{C}$ such that $\eta = \frac{\lambda}{2} \sigma_I$.
	Note that the $L^2$-norm of $\eta^2$ is given by
	$
		\| \eta^2 \|_{L^2}^2  = \int_X \eta^2 \wedge \bar{\eta}^2 ,
	$
	and the volume of $(X, \omega_{X,0})$ is defined by
	$
		\vol(X, \omega_{X,0}) = \int_X \omega_{X,0}^4/4!.
	$
	Since $\omega_{X,0}$ is Ricci-flat, it follows from \cite[Corollary 23.9]{MR1963559} that
	\begin{align}\label{f-3-1}
		|\lambda|^2 = \frac{1}{2} \left( \frac{\eta^2 \wedge \bar{\eta}^2}{\omega_{X,0}^4/4!} \right)^{\frac{1}{2}} =\frac{1}{2} \left( \frac{\| \eta^2 \|_{L^2}^2}{\vol(X, \omega_{X,0})} \right)^{\frac{1}{2}}.
	\end{align}
	Similarly, there exists a complex number $\mu \in \mathbb{C}$ such that $\theta = \frac{\mu}{2} \bar{\sigma}_I$ in $H^2(X, \mathcal{O}_X)$.
	We identify the cohomology class $\theta$ with its harmonic representative.
	 The $L^2$-norm of $\theta^2$ is given by
	$
		\| \theta^2 \|_{L^2}^2  = \int_X \theta^2 \wedge \bar{\theta}^2 ,
	$
	and we have
	\begin{align}\label{f-3-2}
		|\mu|^2  =\frac{1}{2} \left( \frac{ \| \theta^2 \|_{L^2}^2}{\vol(X, \omega_{X,0})} \right)^{\frac{1}{2}}.
	\end{align}
	
	\begin{lem}\label{l-3-1}
		For any $\alpha \in A^{1,1}(X)$, the following identity holds:
		$$
			h_{X,0}(\eta \wedge \alpha, \eta \wedge \alpha) = |\lambda|^2 h _{X,0}(\alpha, \alpha).
		$$
	\end{lem}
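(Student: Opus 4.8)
The plan is to prove the identity pointwise. Both sides are smooth functions on $X$ obtained by applying, fibrewise, the Hermitian inner product on $\wedge^{\bullet,\bullet}T^{*}X$ attached to $h_{X,0}$, so it suffices to fix $x\in X$ and establish the corresponding statement for the Hermitian inner product on $\wedge^{\bullet,\bullet}T^{*}_{x}X$. Since $(X,g)$ is hyperk\"ahler, the real inner-product space $(T_{x}X,g_{x})$ together with $I_{x},J_{x},K_{x}$ is isometric, as a quaternionic Hermitian vector space, to the standard $\mathbb{H}^{2}$. I would use this to choose holomorphic coordinates $z^{1},\dots,z^{4}$ for $I$ near $x$ whose differentials form, at $x$, a coframe adapted to the quaternionic structure, so that $h_{X,0}(\partial_{z^{j}},\partial_{\bar z^{k}})=\delta_{jk}$ at $x$ and $\sigma_{I}|_{x}=dz^{1}\wedge dz^{2}+dz^{3}\wedge dz^{4}$. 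In the conventions of the paper the coefficient here is forced: one computes that $\omega_{J}$ and $\omega_{K}$ are pointwise orthogonal of equal norm, which pins down the Darboux constant.

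With the adapted frame fixed, the lemma reduces to the pointwise linear-algebra claim that $\alpha\mapsto\sigma_{I}\wedge\alpha$ is a conformal isomorphism $\wedge^{1,1}T^{*}_{x}X\to\wedge^{3,1}T^{*}_{x}X$ with dilation factor $4$, i.e. $h_{X,0}(\sigma_{I}\wedge\alpha,\sigma_{I}\wedge\alpha)=4\,h_{X,0}(\alpha,\alpha)$; the lemma then follows at once from $\eta=\tfrac{\lambda}{2}\sigma_{I}$, since $h_{X,0}(\eta\wedge\alpha,\eta\wedge\alpha)=\tfrac{|\lambda|^{2}}{4}h_{X,0}(\sigma_{I}\wedge\alpha,\sigma_{I}\wedge\alpha)=|\lambda|^{2}h_{X,0}(\alpha,\alpha)$. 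To prove the claim I would write $\alpha|_{x}=\sum_{j,k}a_{j\bar k}\,dz^{j}\wedge d\bar z^{k}$ and compute $\sigma_{I}\wedge\alpha$ monomial by monomial: because $\sigma_{I}|_{x}$ is a sum of two ``disjoint'' holomorphic $2$-monomials, for each pair $(j,k)$ the product $\sigma_{I}|_{x}\wedge(dz^{j}\wedge d\bar z^{k})$ is, up to sign, a single $(3,1)$-monomial, namely $d\bar z^{k}$ wedged with the three $dz$'s complementary to the Darboux partner of $dz^{j}$. One then checks that, as $(j,k)$ ranges over all $16$ pairs, these $(3,1)$-monomials are mutually orthogonal (their holomorphic parts are pairwise distinct whenever $j\neq j'$, and their antiholomorphic parts differ when $k\neq k'$) and each has squared norm equal to $4$ times that of $dz^{j}\wedge d\bar z^{k}$; summing over $(j,k)$ gives the claim.

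The argument is short once the frame is in place, so I expect the only real obstacle to be bookkeeping of conventions: justifying the simultaneous normalisation of $g_{x}$ and $\sigma_{I}|_{x}$ (which is where the hyperk\"ahler structure, and not merely a complex symplectic structure, is used), and keeping the numerical factors consistent across the conventions for $\omega_{X,0}$, for the Hermitian inner product on $(p,q)$-forms, and for $\lambda$ as fixed by formula~(\ref{f-3-1}). As a consistency check one can recompute $\tfrac12\bigl(\eta^{2}\wedge\bar\eta^{2}/(\omega_{X,0}^{4}/4!)\bigr)^{1/2}$ using $\sigma_{I}^{2}\wedge\bar\sigma_{I}^{2}$ in the adapted frame and verify that it reproduces $|\lambda|^{2}$, which both confirms the normalisation and re-derives (\ref{f-3-1}). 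One could alternatively phrase the linear-algebra step representation-theoretically, observing that $\alpha\mapsto\sigma_{I}\wedge\alpha$ is equivariant for the pointwise stabiliser $\mathrm{Sp}(2)$ of $(g_{x},\sigma_{I}|_{x})$ and that $\wedge^{1,1}T^{*}_{x}X$ splits into three inequivalent irreducible summands, but since one would still have to compute the three corresponding scalars and check that they agree, the direct monomial computation above seems cleanest.
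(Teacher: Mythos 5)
Your strategy is exactly the paper's: reduce to a pointwise statement, choose a frame adapted to the quaternionic Hermitian structure of $(T_{p,\mathbb{R}}X,g,I,J,K)$, show that wedging with $\sigma_I$ dilates the Hermitian norm on $\wedge^{1,1}T^*_pX$ by the factor $4$, and conclude via $\eta=\tfrac{\lambda}{2}\sigma_I$. The one place where your write-up would fail as literally stated is the normalisation you fix at the outset: with a unitary coframe, i.e.\ $h_{X,0}(dz^j,dz^k)=\delta_{jk}$, you cannot also have $\sigma_I|_x=dz^1\wedge dz^2+dz^3\wedge dz^4$. Indeed $\omega_J$ and $\omega_K$ are orthogonal and each has squared norm $4$ (four coefficients $\pm1$ in a $g$-orthonormal real coframe), so $\|\sigma_I\|^2=8$, whereas $\|dz^1\wedge dz^2+dz^3\wedge dz^4\|^2=2$ in a unitary coframe; the correct Darboux normal form is $\sigma_I|_x=2e^{i\theta}\,(dz^1\wedge dz^2+dz^3\wedge dz^4)$. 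This matters because under your stated normalisation $\sigma_I\wedge(dz^j\wedge d\bar z^k)$ is a single unit-norm $(3,1)$-monomial, so the dilation factor would come out as $1$, not $4$, and the lemma would fail; your assertion that each image monomial has ``squared norm equal to $4$ times that of $dz^j\wedge d\bar z^k$'' is true only after the missing coefficient $2$ is restored. The paper sidesteps this by working in the frame $v^j$ dual to $v_1=\tfrac{e-iIe}{2}$, etc., for which $h_{X,0}(v^i,v^j)=2\delta_{ij}$ and $\sigma_I=iv^1\wedge v^2+iv^3\wedge v^4$ hold exactly. You correctly identify the Darboux constant as the crux and even name the right way to pin it down (orthogonality and equal norms of $\omega_J,\omega_K$), so the fix is immediate; with the constant corrected, the rest of your monomial computation goes through and coincides with the paper's.
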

	
	\begin{proof} 
		Fix $p \in X$.
		Since $(X, g)$ is hyperk\"ahler, the real tangent space $T_{p, \mathbb{R}}X$ at $p$ is equipped with the structure of a quaternionic hermitian vector space
		(see \cite[\S 23.2]{MR1963559}).
		Therefore, there exist $e, f \in T_{p, \mathbb{R}}X$ such that 
		$
			\{ e, Ie, Je, Ke, f, If, Jf, Kf \}
		$
		forms an orthonormal basis of $(T_{p, \mathbb{R}}X, g)$.
		Set 
		$$
			v_1 =\frac{ e-iIe }{2}, \quad v_2 =\frac{ -Ke-iJe }{2}, \quad v_3 =\frac{ f-iIf }{2}, \quad v_4 =\frac{ -Kf-iJf }{2}.
		$$
		Then $v_1, v_2, v_3, v_4$ are of type $(1,0)$ with respect to the complex structure $I$
		and form a $\mathbb{C}$-basis of the holomorphic tangent space $T_pX$ at $p$.
		Moreover, we have
		$
			h_{X,0}(v_i, v_j) = \frac{1}{2} \delta_{ij}.
		$
		Let $v^1, v^2, v^3, v^4$ be the dual basis. 
		Then we have $h_{X,0}(v^i, v^j) = 2\delta_{ij}$ and $\sigma_I=i v^1 \wedge v^2 +i v^3 \wedge v^4$.
		Let $\alpha = \frac{1}{2} \sum_{i,j} \alpha_{ij} v^i \wedge \bar{v}^j \in \wedge^{1,1}T_p^*X$.
		Then we have
		$$
			h_{X,0}(\sigma_I \wedge \alpha, \sigma_I \wedge \alpha) = 4\sum_{i,j} |\alpha_{ij}|^2 = 4 h _{X,0}(\alpha, \alpha),
		$$
		which completes the proof.
	\end{proof}

	\begin{lem}\label{l-2-2-14}
		The following identities hold:
		\begin{align*}
			& \quad c_1 \left( \left( R^1f_*\Omega^1_{\xs} \right)_- , h_{L^2} \right) -c_1 \left( \left( R^3f_*\Omega^1_{\xs} \right)_+ , h_{L^2} \right) \\
			&=-\frac{21 - t}{4} c_1(R^4f_*\mathcal{O}_{\mathscr{X}}, h_{L^2}) -\frac{21 - t}{4} dd^c \log \vol (\xs, \omega) ,\\
			& \quad c_1 \left( \left( R^1f_*\Omega^1_{\xs} \right)_- , h_{L^2} \right) -c_1 \left( \left( R^1f_*\Omega^3_{\xs} \right)_+ , h_{L^2} \right) \\
			&=-\frac{21 - t}{4} c_1(f_*\Omega^4_{\xs}, h_{L^2} ) -\frac{21 - t}{4} dd^c \log \vol (\xs, \omega).
		\end{align*}
	\end{lem}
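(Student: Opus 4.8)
The plan is to realize the four eigenbundles occurring in the statement as tensor products built from $(R^1f_*\Omega^1_{\xs})_-$ and the line bundles $f_*\Omega^2_{\xs}$, $R^2f_*\mathcal{O}_{\mathscr{X}}$, by means of isomorphisms induced by the holomorphic symplectic form $\eta$ and its conjugate, and then to measure the distortion these isomorphisms produce on the $L^2$-metrics using Lemma~\ref{l-3-1} and the normalization \eqref{f-3-1}. First I would reduce to the case in which $h_{\xs}$ is fiberwise Ricci-flat; this is harmless because, by Lemma~\ref{l-1-4-4}, the Chern forms of the $L^2$-metrics on $R^1f_*\Omega^1_{\xs}$ and $R^1f_*\Omega^3_{\xs}$ depend only on the fiberwise K\"ahler class, and the same holds for $\vol(\xs,\omega)$ and for the $L^2$-metrics on $f_*\Omega^4_{\xs}$ and $R^4f_*\mathcal{O}_{\mathscr{X}}$; and in the Ricci-flat case $\eta,\bar\eta,\eta^2,\bar\eta^2$ are parallel, hence harmonic, so wedging with them preserves harmonicity and makes the metric computations explicit.

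Next I would set up the isomorphisms fiberwise. Because $\eta$ is a nowhere-degenerate $2$-form on the fourfold $X_s$, wedging with $\eta$ is a sheaf isomorphism $\Omega^1_{X_s}\xrightarrow{\sim}\Omega^3_{X_s}$, hence induces an isomorphism $f_*\Omega^2_{\xs}\otimes R^1f_*\Omega^1_{\xs}\xrightarrow{\sim}R^1f_*\Omega^3_{\xs}$; likewise $\eta\otimes\eta\mapsto\eta^2$ gives $(f_*\Omega^2_{\xs})^{\otimes 2}\xrightarrow{\sim}f_*\Omega^4_{\xs}$. For a manifold of $K3^{[2]}$-type one has $H^4(X_s,\mathbb{C})=\mathrm{Sym}^2 H^2(X_s,\mathbb{C})$; taking Hodge components gives $H^{1,3}(X_s)=H^{1,1}(X_s)\cdot H^{0,2}(X_s)$ and $H^{0,4}(X_s)=\mathrm{Sym}^2 H^{0,2}(X_s)$, so cup product with the class $[\bar\eta]\in H^2(X_s,\mathcal{O}_{X_s})$ yields isomorphisms $R^1f_*\Omega^1_{\xs}\otimes R^2f_*\mathcal{O}_{\mathscr{X}}\xrightarrow{\sim}R^3f_*\Omega^1_{\xs}$ and $(R^2f_*\mathcal{O}_{\mathscr{X}})^{\otimes 2}\xrightarrow{\sim}R^4f_*\mathcal{O}_{\mathscr{X}}$. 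Since $\iota^*\eta=-\eta$ and $\iota^*\bar\eta=-\bar\eta$ while $\iota^*\eta^2=\eta^2$, the line bundles $f_*\Omega^2_{\xs}$ and $R^2f_*\mathcal{O}_{\mathscr{X}}$ lie in the $(-1)$-eigenbundle, so the first two isomorphisms restrict to $(R^1f_*\Omega^3_{\xs})_+\cong f_*\Omega^2_{\xs}\otimes(R^1f_*\Omega^1_{\xs})_-$ and $(R^3f_*\Omega^1_{\xs})_+\cong R^2f_*\mathcal{O}_{\mathscr{X}}\otimes(R^1f_*\Omega^1_{\xs})_-$; write $m:=\rk(R^1f_*\Omega^1_{\xs})_-=\dim H^{1,1}(X_s)_-=\tfrac{21-t}{2}$.

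Then I would compute, for each isomorphism $\Phi\colon A\xrightarrow{\sim}B$, the ratio $\rho$ of the $L^2$-Gram determinants of a holomorphic frame of $A$ and of its image, and use $c_1(B,h_B)-c_1(A,h_A)=-dd^c\log\rho$. By Lemma~\ref{l-3-1} and its complex conjugate, wedging a harmonic $(1,1)$-form with $\eta$ or with $\bar\eta$ multiplies the pointwise Hodge inner product by $|\lambda|^2$, so $\langle\eta\wedge\alpha_i,\eta\wedge\alpha_j\rangle_{L^2}=|\lambda_s|^2\langle\alpha_i,\alpha_j\rangle_{L^2}$ and likewise for $\bar\eta$; also the (fiberwise constant) pointwise norms of $\eta,\bar\eta$ are proportional to $|\lambda_s|^2$ and those of $\eta^2,\bar\eta^2$ to $|\lambda_s|^4$, while \eqref{f-3-1} gives $|\lambda_s|^4$ proportional to $\|\eta^2\|_{L^2}^2/\vol(\xs,\omega)$. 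Carrying this through yields
$$
c_1\bigl((R^1f_*\Omega^3_{\xs})_+\bigr)-c_1\bigl(f_*\Omega^2_{\xs}\otimes(R^1f_*\Omega^1_{\xs})_-\bigr)=m\,dd^c\log\vol(\xs,\omega),
$$
together with the analogous identity for $(R^3f_*\Omega^1_{\xs})_+$ and $R^2f_*\mathcal{O}_{\mathscr{X}}$, and
$$
c_1(f_*\Omega^4_{\xs})=2\,c_1(f_*\Omega^2_{\xs})+dd^c\log\vol(\xs,\omega),\qquad c_1(R^4f_*\mathcal{O}_{\mathscr{X}})=2\,c_1(R^2f_*\mathcal{O}_{\mathscr{X}})+dd^c\log\vol(\xs,\omega).
$$
Expanding $c_1\bigl(f_*\Omega^2_{\xs}\otimes(R^1f_*\Omega^1_{\xs})_-\bigr)=c_1\bigl((R^1f_*\Omega^1_{\xs})_-\bigr)+m\,c_1(f_*\Omega^2_{\xs})$ and its $R^2f_*\mathcal{O}_{\mathscr{X}}$-analogue, eliminating $c_1(f_*\Omega^2_{\xs})$ and $c_1(R^2f_*\mathcal{O}_{\mathscr{X}})$, and substituting $m=\tfrac{21-t}{2}$, gives the two stated identities.

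The hard part will be extracting the correct coefficient $\tfrac{21-t}{4}$ in front of $dd^c\log\vol(\xs,\omega)$: a naive composition of the isomorphisms would produce $\tfrac{21-t}{2}$, and the factor $\tfrac14$ appears only because the $L^2$-metric on $f_*\Omega^4_{\xs}$ induced by $\eta^2$ is \emph{not} the square of the $L^2$-metric on $f_*\Omega^2_{\xs}$ but differs from it by a factor proportional to $\vol(\xs,\omega)^{-1}$ — and similarly for $R^4f_*\mathcal{O}_{\mathscr{X}}$ versus $R^2f_*\mathcal{O}_{\mathscr{X}}$. Pinning that factor down, together with the verification that cup product with $[\bar\eta]$ is genuinely an isomorphism (which for $\eta$ is the elementary non-degeneracy statement but for $\bar\eta$ really uses the $K3^{[2]}$-specific fact $H^4(X_s)=\mathrm{Sym}^2 H^2(X_s)$), are the two points that need care; the rest is bookkeeping with first Chern forms, Lemma~\ref{l-3-1} and \eqref{f-3-1}.
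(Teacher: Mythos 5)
Your proposal is correct and follows essentially the same route as the paper's proof: reduce to fiberwise Ricci-flat metrics via Lemma \ref{l-1-4-4}, use the cup-product isomorphisms $(R^1f_*\Omega^1_{\xs})_-\otimes f_*\Omega^2_{\xs}\cong(R^1f_*\Omega^3_{\xs})_+$ (and its $\bar\eta$-analogue, justified by $\operatorname{Sym}^2H^2\cong H^4$), and then compute the $L^2$-metric distortion with Lemma \ref{l-3-1} and the normalization \eqref{f-3-1}. Your bookkeeping through the intermediate line bundles $f_*\Omega^2_{\xs}$, $R^2f_*\mathcal{O}_{\mathscr{X}}$ and the squaring maps to $f_*\Omega^4_{\xs}$, $R^4f_*\mathcal{O}_{\mathscr{X}}$ is just a repackaging of the paper's direct use of $|\lambda|^{2N}$ and $\|\eta^2\|_{L^2}^2$, and it does yield the correct coefficient $\tfrac{21-t}{4}$; the only difference of substance is that the paper imports the first identity from \cite[Lemma 3.7]{I1} while you prove both in parallel.
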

	
	\begin{proof}
		The first identity is proved in \cite[Lemma 3.7]{I1}.
		We prove the second identity in the same manner as in \cite[Lemma 3.7]{I1}.
		
		Let $X$ be a fiber of $f : \X \to S$.
		Since $X$ is a manifold of $K3^{[2]}$-type, it follows from \cite[Proposition 24.1]{MR1963559} and \cite[Main Theorem]{MR1879810} that the cup product induces a $\G$-equivariant isomorphism
		$
			\operatorname{Sym}^2 H^2(X, \mathbb{C}) \cong H^4(X, \mathbb{C}).
		$
		Since the involution $\iota : X \to X$ is antisymplectic, we have the canonical isomorphism
		\begin{align*}
			 H^1(X, \Omega^1_X)_{-} \otimes H^{0}(X, \Omega^{2}_{X}) \cong H^1(X, \Omega^3_X)_{+} \quad \alpha \otimes \eta \to \alpha \wedge \eta.
		\end{align*}
		Therefore, we have an isomorphism of holomorphic vector bundles
		\begin{align}\label{al3-2-2-3}
			\left( R^1f_*\Omega^1_{\xs} \right)_{-} \otimes f_*\Omega^{2}_{\xs} \cong \left( R^1f_*\Omega^3_{\xs} \right)_+.
		\end{align}
		
		Set $N= \frac{21-t}{2}$.
		Let $s_1, \dots , s_N$ be a local holomorphic frame of $\left( R^1f_*\Omega^1_{\xs} \right)_-$.
		Then $s_1 \wedge \dots \wedge s_N$ is a nowhere vanishing local holomorphic section of $\det \left( R^1f_*\Omega^1_{\xs} \right)_-$.
		Let $\eta$ be a nowhere vanishing local holomorphic section of $f_*\Omega^2_{\xs}$.
		By (\ref{al3-2-2-3}), $s_1 \wedge \eta, \dots , s_N \wedge \eta$ are local holomorphic frame of $\left( R^1f_*\Omega^3_{\xs} \right)_+$
		and $(s_1 \wedge \eta) \wedge \dots \wedge (s_N \wedge \eta)$ is a nowhere vanishing local holomorphic section of $\det \left( R^1f_*\Omega^3_{\xs} \right)_+$.
		
		It suffices to show that
		\begin{align}\label{f-3-7}
		\begin{aligned}
			&-\log \| (s_1 \wedge \eta) \wedge \dots \wedge (s_N \wedge \eta) \|_{L^2}^2 \\
			&=-\frac{N}{2} \log \| \eta^2 \|_{L^2}^2 -\log \| s_1 \wedge \dots \wedge s_N \|_{L^2}^2 +\log \left( 2^N \vol (\xs, \omega_{\xs})^{\frac{N}{2}} \right).
		\end{aligned}
		\end{align}
		We may assume that $S$ consists of one point,
		and we set $X=\mathscr{X}$.\par
		By Lemma \ref{l-1-4-4}, the $L^2$-metrics on $H^1(X, \Omega_X^p)$ $(p=1,3)$ and $H^0(X, \Omega^2_X)$ depend only on the choice of the \K class $[\omega_X]$ and are independent of the \K form $\omega_X$ itself.
		Therefore we may assume that $\omega_X =\omega_{X,0}$ is Ricci-flat.\par
		By Lemma \ref{l-3-1}, we have 
		$
			h_{X,0}(\eta \wedge \alpha, \eta \wedge \alpha) = |\lambda|^2 h _{X,0}(\alpha, \alpha)
		$
		for each $\alpha \in A^{1,1}(X)$.
		By integrating both sides, we have
		$
			\| \eta \wedge \alpha \|_{L^2}^2 =|\lambda|^2 \| \alpha \|_{L^2}^2.
		$
		Therefore we have an isometry
		$$
			(\det H^1(X, \Omega_X^1)_- , |\lambda|^{2N} \det h_{L^2}) \cong (\det H^1(X, \Omega_X^3)_+, \det h_{L^2}).
		$$
		By the formula (\ref{f-3-1}), we obtain the formula (\ref{f-3-7}).
	\end{proof}

	\begin{lem}\label{l-2-2-15}
		The following identity holds:
		\begin{align*}
			c_1( R^4h_* \mathcal{O}_{\Z}, h_{L^2} ) = - c_1( f_*\Omega^4_{\xs}, h_{L^2} ).
		\end{align*}
	\end{lem}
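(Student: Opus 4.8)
The plan is to reduce the assertion to an isometry, up to a universal constant, between two \emph{line} bundles on $S$, and then to read that isometry off from the construction $\widetilde{\X}\xrightarrow{\tau}\X$, $\widetilde{\X}\to\Z$. First I would apply Lemma \ref{l-1-4-1} to the family $h:\Z\to S$ of Calabi--Yau $4$-folds with $(m,p,q)=(4,0,4)$, which gives immediately
\begin{align*}
	c_1(R^4h_*\mathcal{O}_{\Z},h_{L^2})=-c_1(h_*\Omega^4_{\zs},h_{L^2}).
\end{align*}
Hence the Lemma is equivalent to the identity $c_1(h_*\Omega^4_{\zs},h_{L^2})=c_1(f_*\Omega^4_{\xs},h_{L^2})$. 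Now $h_*\Omega^4_{\zs}=h_*K_{\zs}$ and $f_*\Omega^4_{\xs}$ are both line bundles on $S$ (their fibres being $H^0(Z_s,K_{Z_s})\cong\mathbb{C}$ and $\mathbb{C}\,\eta_s^2$), and their $L^2$-metrics do not depend on the chosen fibrewise K\"ahler metrics, since for a holomorphic $4$-form $\alpha$ on a compact $4$-fold one has $\|\alpha\|^2_{L^2}=\int i^{16}\,\alpha\wedge\bar\alpha$. So it suffices to produce a holomorphic isomorphism between these two line bundles that changes the $L^2$-metrics only by a positive constant, because rescaling a hermitian metric on a line bundle by a constant does not change its first Chern form.

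Next I would identify such an isomorphism with the one already introduced in this section. Taking $(p,q)=(4,0)$ in the isomorphism $\phi$ coming from (\ref{al2-2-2-4}): the summand $R^{-1}f_*\Omega^{-1}_{\xss}$ vanishes, and since $\iota$ is antisymplectic we have $\iota^*(\eta^2)=(-\eta)^2=\eta^2$, so the $(+1)$-eigenbundle is all of $f_*\Omega^4_{\xs}$. Thus $\phi$ becomes an isomorphism of line bundles $\phi:f_*\Omega^4_{\xs}\xrightarrow{\ \sim\ }h_*\Omega^4_{\zs}$. Concretely, on a local generator $\eta_s^2$ the class $\tau^*(\eta_s^2)$ is a nonzero $\tilde\iota$-invariant holomorphic $4$-form on the blow-up $\widetilde{X}$ spanning $H^0(\widetilde{X},\Omega^4_{\widetilde{X}})=H^0(\widetilde{X},\Omega^4_{\widetilde{X}})_{+}$ (by the $(p,q)=(4,0)$ blow-up formula), and $\phi(\eta_s^2)$ is the generator $\omega_s$ of $H^0(Z_s,\Omega^4_{Z_s})$ characterised by $q^*\omega_s=\tau^*(\eta_s^2)$, where $q:\widetilde{X}\to Z$ is the quotient map.

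Finally I would compare the $L^2$-norms. Since $\tau$ is a proper birational morphism, the integral of a top-degree form is preserved, so $\|\tau^*(\eta_s^2)\|^2_{L^2}=\int_{X_s}\eta_s^2\wedge\bar\eta_s^2=\|\eta_s^2\|^2_{L^2}$; and since $q$ is a finite morphism of degree $2$ between compact $4$-folds, $\|q^*\omega_s\|^2_{L^2}=2\,\|\omega_s\|^2_{L^2}$. Combining these with $q^*\omega_s=\tau^*(\eta_s^2)$ and $\omega_s=\phi(\eta_s^2)$ yields $\|\phi(\eta_s^2)\|^2_{L^2}=\tfrac12\,\|\eta_s^2\|^2_{L^2}$, i.e.\ $\phi$ multiplies the $L^2$-metric by the constant $\tfrac12$. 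Therefore $c_1(h_*\Omega^4_{\zs},h_{L^2})=c_1(f_*\Omega^4_{\xs},h_{L^2})$, which together with the Serre-duality identity above proves the Lemma.

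The geometric input I rely on — that $Z$ is a smooth Calabi--Yau $4$-fold, that $q:\widetilde{X}\to Z$ is a double cover, and that $\tau^*(\eta^2)$ descends (equivalently, trivialises $K_Z$) — is exactly Camere--Garbagnati--Mongardi \cite[Theorem 3.6]{MR3928256}, together with the blow-up formula (\ref{al2-2-2-4}) already used above, so I do not expect an obstacle there. The part that needs genuine care is the metric bookkeeping: verifying that $\phi$ is the \emph{global} isomorphism of direct image sheaves on $S$ (not just a fibrewise identification), and that the degree-$2$ factor coming from $q$ and the birational invariance of the integral for $\tau$ combine into a single universal constant. I expect this metric comparison, rather than any hard analysis, to be the main point to get right.
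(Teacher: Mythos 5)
Your proposal is correct and is essentially the paper's own argument in dualized form: the paper identifies $R^4f_*\mathcal{O}_{\X}\cong R^4h_*\mathcal{O}_{\Z}$ via $\tau^*\alpha=\pi^*\beta$ on $(0,4)$-classes, observes that the double cover rescales the $L^2$-metric by the constant $\tfrac12$, and applies Serre duality at the end, whereas you work with $(4,0)$-forms and apply Serre duality at the start. The key inputs (one-dimensionality of the relevant direct images, birational invariance of $\int\alpha\wedge\bar\alpha$ under the blow-up, the degree-$2$ factor, and the fact that a constant rescaling of a hermitian metric does not change $c_1$) are identical.
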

	
	\begin{proof}
		Recall that $\tau : \widetilde{\X} \to \X$ is the blowup of $\X$ along $\XX$.
		Let $\pi : \widetilde{\X} \to \Z = \widetilde{\X} / \tilde{\iota}$ be the natural projection.
		Set $\tilde{f} = f \circ \tau : \widetilde{\X} \to S$.
		For $s \in S$, the fiber of $f$, $\tilde{f}$ and $h$ are denoted by $X_s$, $\tilde{X}_s$ and $Z_s$, respectively. 
		Since $h^{0,4}(X_s) = h^{0,4}(\tilde{X}_s) = h^{0,4}(Z_s)=1$, $R^4f_* \mathcal{O}_{\X}$, $R^4 \tilde{f}_* \mathcal{O}_{\widetilde{\X}}$ and $R^4h_* \mathcal{O}_{\Z}$ are invertible sheaves.
		
		Let $\alpha \in H^4( X_s, \mathcal{O}_{X_s})$.
		Since $\iota_s$ is an antisymplectic involution on $X_s$, $\alpha$ is $\iota_s$-invariant
		and $\tau^* \alpha \in H^4( \tilde{X}_s, \mathcal{O}_{ \tilde{X}_s })$ is $\tilde{\iota}_s$-invariant.
		Therefore, there exists a unique element $\beta \in H^4( Z_s, \mathcal{O}_{Z_s})$ such that
		\begin{align}\label{al-2-2-16}
			\tau^* \alpha = \pi^* \beta.
		\end{align}
		The correspondence from $\alpha$ to $\beta$ induces a natural isomorphism of holomorphic line bundles on $S$
		\begin{align}\label{al-2-2-17}
			R^4f_*\mathcal{O}_{\X} \cong R^4h_*\mathcal{O}_{\Z} .
		\end{align}
		Since $\pi : \widetilde{\X} \to \Z$ is a double cover, we have
		\begin{align}\label{al-2-2-18}
			\| \beta \|^2_{L^2} &= \int_{Z_s} \beta \wedge \bar{\beta} 
			= \frac{1}{2} \int_{\tilde{X}_s} \pi^* \beta \wedge \overline{\pi^* \beta} 
			= \frac{1}{2} \int_{X_s} \alpha \wedge \bar{\alpha} 
			= \frac{1}{2} \| \alpha \|^2_{L^2},
		\end{align}
		where the third equality holds by $(\ref{al-2-2-16})$.
		By (\ref{al-2-2-17}), (\ref{al-2-2-18}) and the Serre duality, we have
		$
			c_1( R^4h_* \mathcal{O}_{\Z}, h_{L^2} ) =  c_1( R^4f_*\mathcal{O}_{\X}, h_{L^2} ) = - c_1( f_*\Omega^4_{\xs}, h_{L^2} ),
		$
		which completes the proof.
	\end{proof}

	\begin{lem}\label{l-2-2-19}
		The following identity holds:
		\begin{align*}
			\cpq{1}{3} = -\frac{21-t}{4} c_1( f_*\Omega^4_{\xs}, h_{L^2} ) -c_1( f_*\Omega^2_{\xss}, h_{L^2} ).
		\end{align*}
	\end{lem}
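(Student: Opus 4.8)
The plan is to combine Lemma~\ref{l-2-2-5} for $(p,q)=(1,3)$, the two identities of Lemma~\ref{l-2-2-14}, and Serre duality (Lemma~\ref{l-1-4-1}) applied to the three families in play.

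First I would apply Lemma~\ref{l-2-2-5} with $p=1$ and $q=3$, where the isomorphisms $\phi$ and $\psi$ are those furnished by (\ref{al2-2-2-4}) for the pairs $(1,3)$ and $(3,1)$. Recalling that $\Omega^0_{\xss}=\mathcal{O}_{\xss}$, this gives
\begin{align*}
\cpq{1}{3}-\cpq{3}{1}
&=c_1\!\left((R^3f_*\Omega^1_{\xs})_+,h_{L^2}\right)-c_1\!\left((R^1f_*\Omega^3_{\xs})_+,h_{L^2}\right)\\
&\quad +c_1(R^2f_*\mathcal{O}_{\xss},h_{L^2})-c_1(f_*\Omega^2_{\xss},h_{L^2}).
\end{align*}
Next I would subtract the second identity of Lemma~\ref{l-2-2-14} from the first. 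The summands $c_1((R^1f_*\Omega^1_{\xs})_-,h_{L^2})$ and $\frac{21-t}{4}\,dd^c\log\vol(\xs,\omega)$ are common to both and cancel, and after substituting $c_1(R^4f_*\mathcal{O}_{\mathscr{X}},h_{L^2})=-c_1(f_*\Omega^4_{\xs},h_{L^2})$ (Serre duality, fibre dimension $4$) one is left with
\begin{align*}
c_1\!\left((R^1f_*\Omega^3_{\xs})_+,h_{L^2}\right)-c_1\!\left((R^3f_*\Omega^1_{\xs})_+,h_{L^2}\right)=\frac{21-t}{2}\,c_1(f_*\Omega^4_{\xs},h_{L^2}).
\end{align*}

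Substituting this last relation into the first display cancels the two $(+)$-eigenbundle contributions. Invoking in addition Serre duality for the surface family $\XX/S$, namely $c_1(R^2f_*\mathcal{O}_{\xss},h_{L^2})=-c_1(f_*\Omega^2_{\xss},h_{L^2})$ (Lemma~\ref{l-1-4-1}, fibre dimension $2$), and Serre duality for the family of Calabi--Yau fourfolds $\zs$, namely $\cpq{3}{1}=-\cpq{1}{3}$ (Lemma~\ref{l-1-4-1}, fibre dimension $4$), so that the left-hand side of the first display equals $2\,\cpq{1}{3}$, one obtains
\begin{align*}
2\,\cpq{1}{3}=-\frac{21-t}{2}\,c_1(f_*\Omega^4_{\xs},h_{L^2})-2\,c_1(f_*\Omega^2_{\xss},h_{L^2}),
\end{align*}
and dividing by $2$ yields the assertion. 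The step to watch is the accounting in Lemma~\ref{l-2-2-5} for $(p,q)=(1,3)$: one must keep $R^{q-1}f_*\Omega^{p-1}_{\xss}=R^2f_*\mathcal{O}_{\xss}$ and $R^{p-1}f_*\Omega^{q-1}_{\xss}=f_*\Omega^2_{\xss}$ straight (they enter with opposite signs), and it is precisely the subtraction of the two identities of Lemma~\ref{l-2-2-14} that makes both the $(+)$-eigenbundle terms and the Ricci-potential term $dd^c\log\vol(\xs,\omega)$ drop out; everything else is a formal manipulation of curvature identities already established.
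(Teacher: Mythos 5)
Your proposal is correct and follows essentially the same route as the paper: Lemma~\ref{l-2-2-5} for $(p,q)=(1,3)$, the difference of the two identities in Lemma~\ref{l-2-2-14} combined with $c_1(R^4f_*\mathcal{O}_{\mathscr{X}},h_{L^2})=-c_1(f_*\Omega^4_{\xs},h_{L^2})$ to handle the $(+)$-eigenbundle terms, and Serre duality on $\XX/S$ and $\zs$ to finish. The only difference is cosmetic (the paper writes $\cpq{1}{3}=\tfrac12\{\cpq{1}{3}-\cpq{3}{1}\}$ first and substitutes afterwards), and your phrase about the substitution ``cancelling'' the eigenbundle terms is loose wording for replacing them, but the displayed computation is right.
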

	
	\begin{proof}
		By the Serre duality and Lemma \ref{l-2-2-5}, we have
		\begin{align}\label{al2-2-2-8}
		\begin{aligned}
			&\quad \cpq{1}{3} \\
			&= \frac{1}{2} \left\{ \cpq{1}{3} -\cpq{3}{1} \right\} \\
			&= \frac{1}{2} \left\{ c_1 \left( \left( R^3f_*\Omega^1_{\xs} \right)_{+}, h_{L^2} \right) -c_1 \left(  \left( R^1f_*\Omega^3_{\xs} \right)_{+}, h_{L^2} \right) \right\} \\
			&\quad +\frac{1}{2} \left\{ c_1( R^2f_* \mathcal{O}_{\XX}, h_{L^2} ) -c_1( f_*\Omega^2_{ \xss }, h_{L^2} ) \right\}.
		\end{aligned}
		\end{align}
		By Lemma \ref{l-2-2-14} and the Serre duality, we have
		\begin{align}\label{al-2-2-23}
			c_1 \left( \left( R^3f_*\Omega^1_{\xs} \right)_{+}, h_{L^2} \right) -c_1 \left(  \left( R^1f_*\Omega^3_{\xs} \right)_{+}, h_{L^2} \right) = -\frac{21-t}{2} c_1( f_*\Omega^4_{\xs}, h_{L^2} ).
		\end{align}
		By the Serre duality, we have
		\begin{align}\label{al-2-2-24}
			c_1( R^2f_* \mathcal{O}_{\XX}, h_{L^2} ) =-c_1( f_*\Omega^2_{ \xss }, h_{L^2} ).
		\end{align}
		Substituting (\ref{al-2-2-23}) and (\ref{al-2-2-24}) into (\ref{al2-2-2-8}), we obtain the desired formula.
	\end{proof}

	\begin{lem}\label{l-2-2-26}
		The following identity holds:
		\begin{align*}
			2 c_1( R^2h_*\mathbb{C} \otimes \mathcal{O}_S, h_{L^2} ) - c_1( R^3h_*\mathbb{C} \otimes \mathcal{O}_S, h_{L^2} ) = -4 c_1( h_*\mathcal{O}_{\Z}, h_{L^2} ).
		\end{align*}
	\end{lem}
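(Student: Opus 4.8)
\emph{The plan.} I would reduce the statement first to the Hodge-theoretic level, then to an explicit period/volume computation transported to the $K3^{[2]}$-type manifold $X$ and to the fixed surface $X^{\iota}$.

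First, since each fibre $Z_s$ is a Calabi--Yau fourfold, the vanishing (\ref{al-2-2-2}) forces the Hodge filtration on $R^2h_*\mathbb{C}\otimes\mathcal{O}_S$ to be trivial and that on $R^3h_*\mathbb{C}\otimes\mathcal{O}_S$ to have only two steps. As in the proof of Lemma \ref{l-2-2-11} (and by (\ref{al-2-2-12})) this gives $c_1(R^2h_*\mathbb{C}\otimes\mathcal{O}_S,h_{L^2})=\cpq{1}{1}$ and $c_1(R^3h_*\mathbb{C}\otimes\mathcal{O}_S,h_{L^2})=\cpq{1}{2}+\cpq{2}{1}$, so it suffices to prove
\begin{align*}
	2\cpq{1}{1}-\cpq{1}{2}-\cpq{2}{1}=-4\,c_1(h_*\mathcal{O}_{\Z},h_{L^2}).
\end{align*}

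Next I would compute each $c_1$ in a flat holomorphic frame, namely an integral basis of $H^k(Z_s,\mathbb{Z})$, so that $c_1(R^kh_*\mathbb{C}\otimes\mathcal{O}_S,h_{L^2})=-dd^c\log\det(\langle a_i,a_j\rangle_{L^2})$. On $H^2(Z_s)$, which is entirely of type $(1,1)$, expanding $\langle a_i,a_j\rangle_{L^2}$ by the Lefschetz decomposition with respect to $\omega_{Z_s}$ (in the spirit of Lemma \ref{l-1-4-4}) and a short determinant manipulation gives $c_1(R^2h_*\mathbb{C}\otimes\mathcal{O}_S,h_{L^2})=-dd^c\log\bigl|\det\bigl(\int_{Z_s}a_i\wedge a_j\wedge\omega_{Z_s}^2\bigr)\bigr|$, and on $H^3(Z_s)$ a similar, more delicate analysis applies. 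I would then transport these integrals to $\widetilde{X}_s$ through the double cover $\pi$ (picking up only powers of $2$) and to $X_s$ through the blow-up $\tau$, using the splittings $H^2(Z_s,\mathbb{C})=H^2(X_s,\mathbb{C})^{\iota_s}\oplus\bigoplus_a\mathbb{C}[E_a]$ and $H^3(Z_s,\mathbb{C})\cong H^1(X_s^{\iota_s},\mathbb{C})(-1)$, the latter because $b_3$ of a $K3^{[2]}$-type manifold vanishes (cf.\ (\ref{al-2-2-13})). On the $H^2(X_s)^{\iota_s}$-summand the cup products against $\omega_{Z_s}^2$ collapse via the Beauville--Bogomolov--Fujiki relation $\int_{X_s}\alpha^4=c_{X_s}q_{X_s}(\alpha)^2$, exhibiting the flat form $q_{X_s}$; on the $\langle[E_a]\rangle$-summand the classical blow-up intersection formulas reduce everything to the surface $X_s^{\iota_s}$; and the only transcendental, $\omega_{Z_s}$-dependent, contribution that should survive is a multiple of $\log\int_{Z_s}\omega_{Z_s}^4=\log\bigl(24\,\vol(Z_s,\omega_{Z_s})\bigr)$, which is exactly what is needed, since $c_1(h_*\mathcal{O}_{\Z},h_{L^2})=-dd^c\log\vol(Z_s,\omega_{Z_s})$.

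The hard part will be this last step: carrying out the linear algebra of the ``Wick rotation'' between the $L^2$-metric on $H^{1,1}(Z_s)$ (together with the $H^3$-contribution, handled through $X_s^{\iota_s}$) and the flat quadratic datum coming from $q_{X_s}$ and the exceptional classes, and verifying that the coefficient of $\log\vol(Z_s,\omega_{Z_s})$ comes out to exactly $4$ with the correct sign. A cleaner alternative would be to run the whole comparison through Bott--Chern secondary classes, exploiting the complex-conjugation symmetry exactly as in the proof of Lemma \ref{l-2-2-5} and matching the statement directly against Lemma \ref{l-2-2-11}; but even then the essential new input is that $H^2(Z_s)$ — unlike the second cohomology of a general Calabi--Yau fourfold — carries a flat polarization inherited from the Beauville--Bogomolov--Fujiki form of $X_s$, and it is this flat polarization that produces the right-hand side.
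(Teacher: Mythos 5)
Your plan and the paper's proof have essentially nothing in common, and as written your plan has a genuine gap: the decisive step is deferred rather than carried out. You reduce the lemma to showing that a certain combination of Gram determinants of flat integral bases of $H^2(Z_s,\mathbb{Z})$ and $H^3(Z_s,\mathbb{Z})$ depends on the K\"ahler data only through $\log\vol(Z_s,\omega_{Z_s})$, with coefficient exactly $4$ --- and then you explicitly label "verifying that the coefficient \dots comes out to exactly $4$" as "the hard part" and leave it undone. That verification \emph{is} the lemma; nothing has been established without it. Moreover, the intermediate assertion that the only surviving $\omega_{Z_s}$-dependence is a multiple of $\log\int_{Z_s}\omega_{Z_s}^4$ is far from obvious for either term separately: on the $H^2(X_s)_+$-summand the polarized Fujiki relation expresses $\int\alpha\wedge\beta\wedge\omega^2$ as a rank-one perturbation of $q_X(\omega)\,q_X(\alpha,\beta)$, whose determinant is a power of $q_X(\omega_+)$ rather than of $\int_{Z_s}\omega_{Z_s}^4$ (the two differ by exceptional-divisor corrections), and the Gram matrix of $H^2(Z_s)$ is not block-diagonal for your splitting. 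So even the qualitative shape of the answer already requires the cancellation between $2\cpq{1}{1}$ and $\cpq{1}{2}+\cpq{2}{1}$ that you have not exhibited.

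The paper's proof is a short formal argument that bypasses all of this. It quotes from Eriksson--Freixas i Montplet--Mourougane the identity $\sum_{k=0}^{8}(-1)^k k\, c_1(R^kh_*\mathbb{C}\otimes\mathcal{O}_S,h_{L^2})=0$, notes that the terms with $k=1,7$ vanish because $b_1(Z_s)=b_7(Z_s)=0$ for a Calabi--Yau fourfold, and uses Poincar\'e duality to get $c_1(R^{8-k}h_*\mathbb{C}\otimes\mathcal{O}_S,h_{L^2})=-c_1(R^kh_*\mathbb{C}\otimes\mathcal{O}_S,h_{L^2})$, hence in particular $c_1(R^4h_*\mathbb{C}\otimes\mathcal{O}_S,h_{L^2})=0$; substituting leaves $-4c_1(R^2h_*\mathbb{C}\otimes\mathcal{O}_S,h_{L^2})+2c_1(R^3h_*\mathbb{C}\otimes\mathcal{O}_S,h_{L^2})-8c_1(h_*\mathcal{O}_{\Z},h_{L^2})=0$, which is the claim. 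If you want to complete your argument, the realistic fix is to establish (or cite) that alternating-sum identity and run exactly this duality computation, rather than attempting the explicit period calculation.
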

	
	\begin{proof}
		By \cite[proof of Proposition 5.9]{MR4255041}, we have
		\begin{align}\label{al-2-2-27}
			\sum^8_{k=0} (-1)^k k c_1( R^k h_*\mathbb{C} \otimes \mathcal{O}_S, h_{L^2} ) =0.
		\end{align}
		By (\ref{al-2-2-2}), we have
		\begin{align}\label{al-2-2-28}
			c_1( R^1h_*\mathbb{C} \otimes \mathcal{O}_S, h_{L^2} ) = c_1( R^7h_*\mathbb{C} \otimes \mathcal{O}_S, h_{L^2} ) =0.
		\end{align}
		By the Poincar\'e duality, we have
		\begin{align}\label{al-2-2-29}
		\begin{aligned}
			c_1( R^4h_*\mathbb{C} \otimes \mathcal{O}_S, h_{L^2} ) &= 0, \\
			c_1( R^5h_*\mathbb{C} \otimes \mathcal{O}_S, h_{L^2} ) &= -c_1( R^3h_*\mathbb{C} \otimes \mathcal{O}_S, h_{L^2} ), \\
			c_1( R^6h_*\mathbb{C} \otimes \mathcal{O}_S, h_{L^2} ) &= -c_1( R^2h_*\mathbb{C} \otimes \mathcal{O}_S, h_{L^2} ), \\
			c_1( R^8h_*\mathbb{C} \otimes \mathcal{O}_S, h_{L^2} ) &= -c_1( h_*\mathbb{C} \otimes \mathcal{O}_S, h_{L^2} ) = - c_1( h_*\mathcal{O}_{\Z}, h_{L^2} ).
		\end{aligned}
		\end{align}
		Substituting (\ref{al-2-2-28}), (\ref{al-2-2-29}) into (\ref{al-2-2-27}), we obtain the desired formula.
	\end{proof}

	Recall that $\omega_{ H^{\cdot}( \xss )}$ is defined by
	\begin{align}\label{al-2-2-30}
		\omega_{H^{\cdot}(\xss)} = c_1(f_*\Omega^1_{\xss}, h_{L^2}) -c_1(R^1f_*\mathcal{O}_{\mathscr{X}^{\iota}}, h_{L^2}) -2c_1(f_*K_{\xss}, h_{L^2}).
	\end{align}
	
	\begin{prop}\label{p-2-2-31}
		The following identity holds:
		\begin{align*}
			\sum^8_{k=0} (-1)^k \omega_{H^k} = \frac{29-t}{2} c_1( f_*\Omega^4_{\xs}, h_{L^2} ) -\omega_{ H^{\cdot}( \xss )}.
		\end{align*}
	\end{prop}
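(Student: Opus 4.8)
\emph{Proof proposal.} The plan is to chain together all the lemmas of this subsection in a single bookkeeping computation. First I would start from Lemma \ref{l-2-2-1}; expanding the sum $2\sum_{q=1}^3 (-1)^q \cpq{1}{q}$ it asserts
$$
	\sum^8_{k=0} (-1)^k \omega_{H^k} = -2\cpq{1}{1} + 2\cpq{1}{2} - 2\cpq{1}{3} - 4c_1(h_*\mathcal{O}_{\Z}, h_{L^2}) - 4c_1(R^4h_*\mathcal{O}_{\Z}, h_{L^2}).
$$

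Next I would substitute the expressions for $\cpq{1}{1}$, $\cpq{1}{2}$, $\cpq{1}{3}$ and for $c_1(R^4h_*\mathcal{O}_{\Z}, h_{L^2})$ furnished by Lemmas \ref{l-2-2-11}, \ref{l-2-2-15}, and \ref{l-2-2-19}. The resulting right-hand side naturally splits into three groups of terms: first, the contributions to $c_1(f_*\Omega^4_{\xs}, h_{L^2})$, with total coefficient $\tfrac{21-t}{2} + 4 = \tfrac{29-t}{2}$ (coming from $-2\cpq{1}{3}$ via Lemma \ref{l-2-2-19} and from $-4c_1(R^4h_*\mathcal{O}_{\Z},h_{L^2})$ via Lemma \ref{l-2-2-15}); second, the fixed-locus terms $c_1(R^1f_*\mathcal{O}_{\XX}, h_{L^2}) - c_1(f_*\Omega^1_{\xss}, h_{L^2}) + 2c_1(f_*\Omega^2_{\xss}, h_{L^2})$ (from $2\cpq{1}{2}$ and $-2\cpq{1}{3}$); and third, the lattice terms $-2c_1(R^2h_*\mathbb{C}\otimes\mathcal{O}_S, h_{L^2}) + c_1(R^3h_*\mathbb{C}\otimes\mathcal{O}_S, h_{L^2}) - 4c_1(h_*\mathcal{O}_{\Z}, h_{L^2})$.

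The key simplification is that the third group vanishes identically: this is precisely Lemma \ref{l-2-2-26}. Finally, since the fibers of $\xss \to S$ are compact complex surfaces by \cite{MR2805992}, one has $\Omega^2_{\xss} = K_{\xss}$, hence $c_1(f_*\Omega^2_{\xss}, h_{L^2}) = c_1(f_*K_{\xss}, h_{L^2})$; then the second group becomes $c_1(R^1f_*\mathcal{O}_{\XX}, h_{L^2}) - c_1(f_*\Omega^1_{\xss}, h_{L^2}) + 2c_1(f_*K_{\xss}, h_{L^2}) = -\omega_{H^{\cdot}(\xss)}$ by the definition (\ref{al-2-2-30}). Assembling the surviving first and second groups yields the claimed identity.

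I do not anticipate a genuine obstacle, since the statement is a formal consequence of the lemmas already established; the only point requiring care is the arithmetic of coefficients and signs — in particular checking that the coefficient of $c_1(f_*\Omega^4_{\xs}, h_{L^2})$ comes out as $\tfrac{29-t}{2}$, and that the $R^\bullet h_*\mathbb{C}\otimes\mathcal{O}_S$ and $h_*\mathcal{O}_{\Z}$ contributions cancel exactly rather than leaving a residual multiple of $c_1(h_*\mathcal{O}_{\Z}, h_{L^2})$.
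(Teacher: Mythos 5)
Your proposal is correct and follows essentially the same route as the paper: both chain Lemmas \ref{l-2-2-1}, \ref{l-2-2-11}, \ref{l-2-2-15}, \ref{l-2-2-19} and \ref{l-2-2-26} together, cancel the $R^\bullet h_*\mathbb{C}\otimes\mathcal{O}_S$ and $h_*\mathcal{O}_{\Z}$ contributions via Lemma \ref{l-2-2-26}, identify the fixed-locus terms with $-\omega_{H^{\cdot}(\xss)}$ using $\Omega^2_{\xss}=K_{\xss}$, and obtain the coefficient $\tfrac{21-t}{2}+4=\tfrac{29-t}{2}$. The arithmetic checks out.
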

	
	\begin{proof}
		By Lemmas \ref{l-2-2-11} and \ref{l-2-2-19}, we have
		\begin{align}\label{al-2-2-32}
		\begin{aligned}
			&\quad 2\sum^3_{q=1} (-1)^q c_1( R^qh_* \Omega^1_{\zs}, h_{L^2} )  \\
			&= \frac{21-t}{2} c_1( f_*\Omega^4_{\xs}, h_{L^2} ) \\
			&\quad -\left\{ c_1(f_*\Omega^1_{\xss}, h_{L^2}) -c_1(R^1f_*\mathcal{O}_{\mathscr{X}^{\iota}}, h_{L^2}) -2c_1(f_*K_{\xss}, h_{L^2}) \right\} \\
			&\quad -\left\{ 2 c_1( R^2h_*\mathbb{C} \otimes \mathcal{O}_S, h_{L^2} ) - c_1( R^3h_*\mathbb{C} \otimes \mathcal{O}_S, h_{L^2} ) \right\}.
		\end{aligned}
		\end{align}
		Substituting (\ref{al-2-2-30}) and Lemma \ref{l-2-2-26} into (\ref{al-2-2-32}), we have
		\begin{align}\label{al-2-2-33}
			2\sum^3_{q=1} (-1)^q c_1( R^qh_* \Omega^1_{\zs}, h_{L^2} ) = \frac{21-t}{2} c_1( f_*\Omega^4_{\xs}, h_{L^2} ) -\omega_{H^{\cdot}(\xss)} +4 c_1( h_*\mathcal{O}_{\Z}, h_{L^2} ).
		\end{align}
		By (\ref{al-2-2-33}) and by Lemmas \ref{l-2-2-1} and \ref{l-2-2-15}, we have
		$$
			\sum^8_{k=0} (-1)^k \omega_{H^k} = \frac{29-t}{2}c_1( f_*\Omega^4_{\xs}, h_{L^2} ) -\omega_{H^{\cdot}(\xss)},
		$$
		which completes the proof.
	\end{proof}

	For a compact \K manifold $M$, we set
	$
		h^{p,q}(M) = \dim H^q(M, \Omega^{p}_M), b_k(M) = \dim H^k(M, \mathbb{C}), \chi(M) = \sum_{k \geqq 0} (-1)^k b_k(M).
	$
	Let $X$ and $Z$ be the fibers of $f : \X \to S$ and $h : \Z \to S$, respectively.
	Let $H^q(X, \Omega^p_X)_{\pm}$ be the $(\pm 1)$-eigenspaces of $H^q(X, \Omega^p_X)$ with respect to the $\G$-action induced from $\iota : X \to X$.
	Set $h^{p,q}(X)_{\pm} = \dim H^q(X, \Omega^p_X)_{\pm}$ and $b_k(X)_{\pm} = \sum_{p+q=k} h^{p,q}(X)_{\pm}$.
	Recall that $t = \operatorname{Tr} (\iota^*|_{H^{1,1}(X)})$.
	Therefore
	$$
		h^{1,1}(X)_+ -h^{1,1}(X)_- =t, \quad h^{1,1}(X)_+ +h^{1,1}(X)_- = h^{1,1}(X) =21,
	$$
	and 
	\begin{align}\label{al-2-2-35}
		h^{1,1}(X)_+ = \frac{21+t}{2} , \quad h^{1,1}(X)_- = \frac{21-t}{2}.
	\end{align}
	By \cite[proof of Theorem 4.1]{MR3928256}, we have
	$
		H^q(X, \Omega^p_X)_{+} \oplus H^{q-1} (X^{\iota}, \Omega^{p-1}_{X^{\iota}}) \cong H^q(Z, \Omega^p_Z).
	$
	Since
	\begin{align*}
		b_k(Z) = \sum_{p+q=k} h^{p,q}(Z) = \sum_{p+q=k} \left( h^{p,q}(X)_+ +h^{p-1,q-1}(X^{\iota}) \right) =b_k(X)_+ +b_{k-2}(X^{\iota}),
	\end{align*}
	we have
	\begin{align}\label{al-2-2-36}
		\chi(Z) = \sum^8_{k=0} (-1)^k b_k(X)_+ +\chi(X^{\iota}). 
	\end{align}
	
	\begin{prop}\label{p-2-2-37}
		The following identity holds:
		\begin{align*}
			\chi(Z) = \frac{3 t^2 + 717}{4}.
		\end{align*}
	\end{prop}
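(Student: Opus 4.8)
The plan is to extract everything from the isomorphism (\ref{al-2-2-36}), namely $\chi(Z)=\sum_{k=0}^{8}(-1)^{k}b_{k}(X)_{+}+\chi(X^{\iota})$, combined with the topological Lefschetz fixed point formula and the known Hodge theory of manifolds of $\kk$-type. Since $\iota:X\to X$ is a holomorphic involution with smooth fixed locus, its Lefschetz number equals $\chi(X^{\iota})$ (for a finite order holomorphic self-map the local index along each fixed component is $+1$, since $\det_{\mathbb{R}}(\mathrm{id}-d\iota)=2^{\,\mathrm{codim}}>0$ on the normal bundle), so
\begin{align*}
\chi(X^{\iota})=\sum_{k=0}^{8}(-1)^{k}\operatorname{Tr}\!\big(\iota^{*}|_{H^{k}(X,\mathbb{C})}\big)
=\sum_{k=0}^{8}(-1)^{k}\big(b_{k}(X)_{+}-b_{k}(X)_{-}\big)
=2\sum_{k=0}^{8}(-1)^{k}b_{k}(X)_{+}-\chi(X).
\end{align*}
Substituting into (\ref{al-2-2-36}) gives $\chi(Z)=3\sum_{k=0}^{8}(-1)^{k}b_{k}(X)_{+}-\chi(X)$, so the task reduces to computing $\chi(X)$ and the alternating sum of the $\iota$-invariant Betti numbers of $X$ in terms of $t$.

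Both are controlled by the Hodge structure. I would recall from \cite{MR1963559} that $b_{0}(X)=b_{8}(X)=1$, $b_{2}(X)=b_{6}(X)=23$, $b_{4}(X)=276$, that the odd Betti numbers vanish, and hence $\chi(X)=324$. For the invariant parts: the antisymplectic condition $\iota^{*}\eta=-\eta$ makes $\iota^{*}$ act by $-1$ on $H^{2,0}(X)=\mathbb{C}\eta$ and on $H^{0,2}(X)=\mathbb{C}\bar\eta$, so $h^{2,0}(X)_{+}=h^{0,2}(X)_{+}=0$, and with (\ref{al-2-2-35}) this gives $b_{2}(X)_{+}=(21+t)/2$; since $\iota^{*}$ fixes the fundamental class, Poincar\'e duality gives $b_{6}(X)_{+}=(21+t)/2$ and $b_{8}(X)_{+}=b_{0}(X)_{+}=1$. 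The only term requiring work is $b_{4}(X)_{+}$: using the $\iota$-equivariant ring isomorphism $\operatorname{Sym}^{2}H^{2}(X,\mathbb{C})\cong H^{4}(X,\mathbb{C})$ already invoked in Lemma \ref{l-2-2-14}, and splitting into Hodge type, one obtains isomorphisms of $\langle\iota^{*}\rangle$-modules $H^{3,1}(X)\cong\mathbb{C}\eta\otimes H^{1,1}(X)$, $H^{1,3}(X)\cong\mathbb{C}\bar\eta\otimes H^{1,1}(X)$, and $H^{2,2}(X)\cong\operatorname{Sym}^{2}H^{1,1}(X)\oplus\mathbb{C}(\eta\wedge\bar\eta)$, while $H^{4,0}(X)=\mathbb{C}\eta^{2}$ and $H^{0,4}(X)=\mathbb{C}\bar\eta^{2}$ are $\iota^{*}$-fixed. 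Writing $p=(21+t)/2$ and $n=(21-t)/2$ for the multiplicities of $\pm1$ on $H^{1,1}(X)$, the $(+1)$-eigenspace of $\iota^{*}$ on $\operatorname{Sym}^{2}H^{1,1}(X)$ is $\operatorname{Sym}^{2}\!\big(H^{1,1}(X)_{+}\big)\oplus\operatorname{Sym}^{2}\!\big(H^{1,1}(X)_{-}\big)$, of dimension $\binom{p+1}{2}+\binom{n+1}{2}=(t^{2}+483)/4$; hence $h^{2,2}(X)_{+}=(t^{2}+487)/4$, $h^{3,1}(X)_{+}=h^{1,3}(X)_{+}=(21-t)/2$, and $b_{4}(X)_{+}=2+(21-t)+(t^{2}+487)/4=(t^{2}-4t+579)/4$.

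Assembling the pieces,
\begin{align*}
\sum_{k=0}^{8}(-1)^{k}b_{k}(X)_{+}=1+\frac{21+t}{2}+\frac{t^{2}-4t+579}{4}+\frac{21+t}{2}+1=\frac{t^{2}+671}{4},
\end{align*}
and therefore $\chi(Z)=3\cdot\frac{t^{2}+671}{4}-324=\frac{3t^{2}+717}{4}$, which is the claim. The main obstacle is the bookkeeping of the $\langle\iota^{*}\rangle$-representation on $H^{4}(X,\mathbb{C})$ through the second symmetric power; once the displayed decomposition is in place, the rest is elementary arithmetic. As a sanity check, the Enriques case $M_{0}=\Lambda(2)$ has $t=1$, giving $\chi(Z)=180$, in agreement with $\chi(\widetilde{S^{[2]}})=2\chi(S^{[2]})=180$ since $\chi(S^{[2]})=90$ for an Enriques surface $S$.
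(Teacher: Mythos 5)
Your proposal is correct, and the core of the computation coincides with the paper's: both reduce to (\ref{al-2-2-36}) and compute $\sum_{k}(-1)^{k}b_{k}(X)_{+}=\frac{t^{2}+671}{4}$ by splitting $H^{4}(X,\mathbb{C})_{+}\cong\operatorname{Sym}^{2}H^{2}(X,\mathbb{C})_{+}\oplus\operatorname{Sym}^{2}H^{2}(X,\mathbb{C})_{-}$, arriving at the same value $b_{4}(X)_{+}=\frac{t^{2}-4t+579}{4}$ (your bookkeeping by Hodge type and the paper's bookkeeping by eigenvalue on all of $H^{2}$ are equivalent).

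The one place you diverge is the treatment of $\chi(X^{\iota})$: the paper simply quotes $\chi(X^{\iota})=\frac{t^{2}+23}{2}$ from Beauville \cite[Theorem 2]{MR2805992} and adds it to $\frac{t^{2}+671}{4}$, whereas you rederive it from the topological Lefschetz fixed point formula, $\chi(X^{\iota})=\sum_{k}(-1)^{k}\operatorname{Tr}(\iota^{*}|_{H^{k}})=2\sum_{k}(-1)^{k}b_{k}(X)_{+}-\chi(X)$, which turns the whole proposition into a statement about $\sum_{k}(-1)^{k}b_{k}(X)_{+}$ and $\chi(X)=324$ alone. Your route is slightly more self-contained (it needs only the Hodge-theoretic data already used for $b_{4}(X)_{+}$, plus the standard positivity of the local Lefschetz indices for a holomorphic involution with smooth fixed locus), and it has the pleasant side effect of reproving Beauville's formula $\chi(X^{\iota})=\frac{t^{2}+23}{2}$ as a consistency check; the paper's route is shorter on the page at the cost of one more external citation. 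Both yield $\chi(Z)=\frac{3t^{2}+717}{4}$.
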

	
	\begin{proof}
		By \cite[Theorem 2]{MR2805992}, we have
		\begin{align}\label{al-2-2-38}
			\chi(X^{\iota}) = \frac{t^2 + 23}{2}. 
		\end{align}
		By \cite[Main Theorem]{MR1879810}, we have $b_k(X) =0$ for $k= 1, 3, 5, 7$, and
		\begin{align}\label{al-2-2-39}
			b_k(X)_+ = 0 \quad (k= 1, 3, 5, 7).
		\end{align}
		Recall that $\iota$ is antisymplectic. 
		By (\ref{al-2-2-35}), we have
		\begin{align}\label{al-2-2-40}
		\begin{aligned}
			b_0(X)_+ = b_8(X)_+ =1 ,\quad
			b_2(X)_+ = h^{1,1}(X)_+ = \frac{21+t}{2} = b_6(X)_+.
		\end{aligned}
		\end{align}
		On the other hand,
		\begin{align}\label{al-2-2-41}
			b_2(X)_- = h^{2,0}(X) +h^{1,1}(X)_- +h^{0,2}(X) =\frac{25-t}{2}.
		\end{align}
		By \cite[Proposition 24.1]{MR1963559}, the cup product induces $\operatorname{Sym}^2 H^2(X, \mathbb{C}) \cong H^4(X, \mathbb{C})$.
		Therefore
		$
			H^4(X, \mathbb{C})_+ \cong \operatorname{Sym}^2 H^2(X, \mathbb{C})_+ \oplus \operatorname{Sym}^2 H^2(X, \mathbb{C})_-,
		$
		and, by (\ref{al-2-2-40}), (\ref{al-2-2-41}), we have
		\begin{align}\label{al-2-2-42}
		\begin{aligned}
			b_4(X)_+ = \dbinom{b_2(X)_+}{2} +b_2(X)_+ +\dbinom{b_2(X)_-}{2} +b_2(X)_-
			= \frac{t^2 -4t +579}{4}.
		\end{aligned}
		\end{align}
		By (\ref{al-2-2-39}), (\ref{al-2-2-40}) and (\ref{al-2-2-42}), we have
		\begin{align}\label{al-2-2-43}
			\sum^8_{k=0} (-1)^k b_k(X)_+ = \frac{t^2 + 671}{4}.
		\end{align}
		Substituting (\ref{al-2-2-38}), (\ref{al-2-2-43}) into (\ref{al-2-2-36}), we have the desired formula.
	\end{proof}

	\textbf{Proof of Theorem \ref{t-2-1-3}}.
	By Lemma \ref{l-2-2-15} and by Propositions \ref{p-2-2-31}, \ref{p-2-2-37}, we have
	\begin{align}\label{al-2-2-44}
	\begin{aligned}
		&\quad \sum^{8}_{k=0} (-1)^k \omega_{H^k} -\frac{\chi}{12} \omega_{WP} \\
		&=  \frac{29-t}{2} c_1( f_*\Omega^4_{\xs}, h_{L^2} ) -\omega_{ H^{\cdot}( \xss )} -\frac{ t^2 + 239}{16} c_1(h_*K_{\zs}, h_{L^2}) \\
		&=  -\frac{(t+1)(t+7)}{16} c_1( f_*\Omega^4_{\xs}, h_{L^2} ) -\omega_{ H^{\cdot}( \xss )}.
	\end{aligned}
	\end{align}
	By (\ref{al-2-2-44}) and by Theorem \ref{p-3-4}, we have
	\begin{align*}
		dd^c \log \tau_{M, \mathcal{K}, \xs} =  -\frac{(t+1)(t+7)}{16} c_1( f_*\Omega^4_{\xs}, h_{L^2} ) -\omega_{ H^{\cdot}( \xss )} = \sum^{8}_{k=0} (-1)^k \omega_{H^k} -\frac{\chi}{12} \omega_{WP}.
	\end{align*}
	This is the desired result. \hfill $\Box$

\subsection{The BCOV invariant of the Calabi-Yau fourfolds $Z$} \label{ss-2-3}
	
	Let $M_0$ be a primitive hyperbolic 2-elementary sublattice of $L_{K3}$. 
	As before, the orthogonal complement of $M_0$ is considered in $L_{K3}$.
	In this subsection, we assume that 
	\begin{align*}
		(1) \hspace{3pt} \rk (M_0) \leqq 17, \qquad (2)\hspace{3pt} \text{$\bar{\mathscr{D}}_{M_0^{\perp}}$ is irreducible}.
	\end{align*}
	Here $M_0^{\perp}$ denotes the orthogonal complement of $M_0$ in $L_{K3}$.
	We define a sublattice $\widetilde{M}_0$ of $L_2 = L_{K3} \oplus \mathbb{Z}e$ by
	$$
		\widetilde{M}_0 = M_0  \oplus \mathbb{Z}e.
	$$
	Then $\widetilde{M}_0$ is an admissible sublattice of $L_2$.
	Let $\mathcal{K} \subset \operatorname{KT}(\widetilde{M}_0)$ be a natural chamber.
	
	Let $(Y, \sigma)$ be a 2-elementary K3 surface of type $M_0$.
	Then $(Y^{[2]}, \sigma^{[2]})$ is a manifold of $K3^{[2]}$-type with involution of type $(\widetilde{M}_0, \mathcal{K})$.
	We set $(X_Y, \iota_Y)=(Y^{[2]}, \sigma^{[2]})$.
	The blowup of $X_Y$ along the fixed locus $X_Y^{\iota_Y}$ is denoted by $\widetilde{X_Y}$,
	and the involution of $\widetilde{X_Y}$ induced from $\iota_Y$ is denoted by $\tilde{\iota}_Y$.
	By \cite[Theorem 3.6]{MR3928256}, $\widetilde{X_Y} / \tilde{\iota}_Y$ is a Calabi-Yau 4-fold.
	Set
	$$
		Z_Y = \widetilde{X_Y} / \tilde{\iota}_Y.
	$$
	
	Recall that $\mathcal{M}_{M_0}^{\circ} = \mathcal{M}_{M_0} \setminus \bar{\mathscr{D}}_{M_0^{\perp}}$ is a coarse moduli space of 2-elementary K3 surfaces of type $M_0$.
	We define a function $u$ on $\mathcal{M}_{M_0}^{\circ}$ by
	$$
		u(Y, \sigma) = \tau_{BCOV} (Z_Y) \cdot \tau_{\widetilde{M}_0, \mathcal{K}} (X_Y, \iota_Y)^{-1} \quad ( (Y, \sigma) \in \mathcal{M}_{M_0}^{\circ} ).
	$$
	
	\begin{lem}\label{l-2-3-1}
		The function $\log u$ on $\mathcal{M}_{M_0}^{\circ}$ is pluriharmonic.
	\end{lem}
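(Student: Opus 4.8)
The strategy is that the two holomorphic torsion functions entering $u$ satisfy \emph{the same} curvature equation on the base of every family, so their quotient has vanishing $dd^c$. More precisely, by Proposition \ref{p3-2-1-2} the quantity $\tau_{BCOV}(Z_Y)$ depends only on the isomorphism class of $Z_Y$, hence only on that of $(Y,\sigma)$; combined with the smoothness of the BCOV invariant in families it defines a smooth real-valued function on $\mathcal{M}_{M_0}^{\circ}$. Likewise, via the period map $P_{\widetilde{M}_0,\mathcal{K}}$ and the isomorphism $\phi$ of Theorem \ref{t-3-1-1} identifying $\mathcal{M}_{M_0}$ with $\mathcal{M}_{\widetilde{M}_0,\mathcal{K}}$, Theorem \ref{tA-1-1} and the remark following Theorem \ref{p-3-4} show that $(Y,\sigma)\mapsto \tau_{\widetilde{M}_0,\mathcal{K}}(X_Y,\iota_Y)$ is a smooth real-valued function on $\mathcal{M}_{M_0}^{\circ}$. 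Thus $\log u$ is smooth on $\mathcal{M}_{M_0}^{\circ}$ and pluriharmonicity is a purely local assertion.

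To verify it near an arbitrary point $p_0\in\mathcal{M}_{M_0}^{\circ}$, I would use the surjectivity and local-isomorphism property of the period map for $2$-elementary K3 surfaces (see \cite{MR2047658}) to choose a small polydisc $S$ and a family $(\mathcal{Y},\sigma)\to S$ of $2$-elementary K3 surfaces of type $M_0$ whose period map $S\to\mathcal{M}_{M_0}^{\circ}$ is an open immersion onto a neighbourhood of $p_0$ (equivalently, a biholomorphism onto a neighbourhood of a lift of $p_0$ in $\Omega_{M_0^{\perp}}^{\circ}$). Taking fibrewise Hilbert schemes of two points produces a family $f:(\X,\iota)\to S$ of $K3^{[2]}$-type manifolds with involution of type $(\widetilde{M}_0,\mathcal{K})$, and the Camere--Garbagnati--Mongardi construction of \S\ref{ss-2-1} (blow up $\X$ along $\XX$, then quotient by $\tilde\iota$) gives a family $h:\Z\to S$ of Calabi--Yau $4$-folds with $Z_s=Z_{Y_s}$. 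On $S$, Theorem \ref{t-2-1-3} yields
$$
dd^c\log\tau_{\widetilde{M}_0,\mathcal{K},\xs}=\sum_{k=0}^{8}(-1)^k\omega_{H^k}-\frac{\chi}{12}\,\omega_{WP},
$$
while equation (\ref{al-2-1-1}) (from \cite{MR4255041}) gives \emph{exactly} the same right-hand side for $dd^c\log\tau_{BCOV,\zs}$ --- crucially, the forms $\omega_{H^k}$ and $\omega_{WP}$ are intrinsic to the family $h:\Z\to S$ and hence literally agree in the two identities. Subtracting,
$$
dd^c\log\bigl(\tau_{BCOV,\zs}\,\tau_{\widetilde{M}_0,\mathcal{K},\xs}^{-1}\bigr)=0
$$
on $S$, i.e.\ the pullback of $\log u$ to $S$ is pluriharmonic. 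Since $p_0$ was arbitrary and $S\to\mathcal{M}_{M_0}^{\circ}$ covers a neighbourhood of $p_0$, the function $\log u$ is pluriharmonic on $\mathcal{M}_{M_0}^{\circ}$.

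The only step requiring real care is the construction in the second paragraph: one must exhibit, around an arbitrary point of $\mathcal{M}_{M_0}^{\circ}$, an honest family that realizes a neighbourhood there and to which the family-level curvature formulas of Theorem \ref{t-2-1-3} and (\ref{al-2-1-1}) apply, including checking that the resulting family of $K3^{[2]}$-type manifolds indeed has involution of type $(\widetilde{M}_0,\mathcal{K})$ with a natural chamber $\mathcal{K}$ (so that the identification $\phi$ is respected) and that forming $\Z\to S$ is functorial in $S$. This relies on the local Torelli theorem for $2$-elementary K3 surfaces and the functoriality of the Camere--Garbagnati--Mongardi construction in families; once these are in place, the conclusion is the formal observation that the two $dd^c$-equations have identical right-hand sides.
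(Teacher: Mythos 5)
Your proposal is correct and follows essentially the same route as the paper: both arguments reduce to a local statement via the (local Torelli / Kuranishi) realization of a neighbourhood of an arbitrary point of $\mathcal{M}_{M_0}^{\circ}$ by an actual family $f:(\X,\iota)\to S$ together with the associated Camere--Garbagnati--Mongardi family $h:\Z\to S$, and then subtract the curvature equation of Theorem \ref{t-2-1-3} from the one in (\ref{al-2-1-1}), whose right-hand sides coincide. The paper phrases this by pulling $u$ back to $\Omega_{M_0^{\perp}}\setminus\mathscr{D}_{M_0^{\perp}}$ and taking $S=\operatorname{Def}(Y,\sigma)$, but this is the same argument.
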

	
	\begin{proof}
		Let $\Pi_{M_0^{\perp}} : \Omega_{M_0^{\perp}} \to \mathcal{M}_{M_0}$ be the natural projection.
		It suffices to show that $\Pi_{M_0^{\perp}}^*u$ is a smooth function on $ \Omega_{M_0^{\perp}} \setminus \mathscr{D}_{M_0^{\perp}}$
		and $\log \Pi_{M_0^{\perp}}^*u$ is a pluriharmonic function on $ \Omega_{M_0^{\perp}} \setminus \mathscr{D}_{M_0^{\perp}}$.
		
		Let $x \in \Omega_{M_0^{\perp}} \setminus \mathscr{D}_{M_0^{\perp}}$ and set $p = \Pi_{M_0^{\perp}} (x)$.
		Let $(Y, \sigma) \in \bar{\pi}_{M_0}^{-1}(p)$.
		By the local Torelli theorem, we may assume that the Kuranishi space $S := \operatorname{Def}(Y, \sigma)$ is an open neighborhood of $x$ in $ \Omega_{M_0^{\perp}} $.
		Its Kuranishi family is denoted by $g : (\mathscr{Y}, \sigma) \to S$.
		Let $f : (\X, \iota) \to S$ be the family of $K3^{[2]}$-type manifolds with involution of type $(\widetilde{M}_0, \mathcal{K})$ such that
		$
			(X_s, \iota_s) = (X_{Y_s}, \iota_{Y_s})
		$
		for each $s \in S$.
		Let $h : \Z \to S$ be the family of Calabi-Yau 4-folds constructed in the same manner as in \S 2.1.
		
		Then $\left( \Pi_{M_0^{\perp}}^*u \right)|_S = \tau_{BCOV, \zs} \cdot \tau_{\widetilde{M}_0, \mathcal{K}, \xs}^{-1}$.
		Since both $\tau_{BCOV, \zs}$ and $\tau_{\widetilde{M}_0, \mathcal{K}, \xs}$ are smooth functions on $S$, so is $\left( \Pi_{M_0^{\perp}}^*u \right)|_S$.
		By (\ref{al-2-1-1}) and Theorem \ref{t-2-1-3}, $\log \left( \Pi_{M_0^{\perp}}^*u \right)|_S \\ \left( = \log \tau_{BCOV, \zs} -\log \tau_{\widetilde{M}_0, \mathcal{K}, \xs} \right)$ is a pluriharmonic function on $S$. 
	\end{proof}

	Let $C$ be an irreducible projective curve on $\mathcal{M}_{M_0}^*$ such that $C$ is not contained in $\bar{\mathscr{D}}_{M_0^{\perp}} \cup (\mathcal{M}^*_{M_0} \setminus \mathcal{M}_{M_0})$.
	Let $p \in C \cap \bar{\mathscr{D}}_{M_0^{\perp}}$ and assume that $p$ is a smooth point of $C$.
	By \cite[Proposition 2.14]{I2}, there exists a smooth projective curve $B$,
	a morphism $\varphi: B \to \mathcal{M}^*_{M_0}$,
	a smooth projective variety $\mathscr{X}$ of complex dimension $5$,
	an involution $\iota : \mathscr{X} \to \mathscr{X}$,
	and a surjective morphism $f : \mathscr{X} \to B$ such that
	\begin{enumerate}[ label= \rm{(\arabic*)} ]
		\item $\varphi(B)=C$.
		\item $f \circ \iota =f$.
		\item There exists a Zariski open subset $B^{\circ} \neq \emptyset$ of $B$ such that $\varphi(B^{\circ}) \subset \mathcal{M}^{\circ}_{M_0}$
			and for each $b \in B^{\circ}$, the fiber $(f^{-1}(b), \iota|_{f^{-1}(b)})$ is a manifold of $K3^{[2]}$-type with antisymplectic involution of type $(\widetilde{M}_0, \mathcal{K})$ with period $\varphi(b)$. 
		\item $f : \mathscr{X} \to B$ is a normal crossing degeneration. 
			Namely, its singular fibers are simple normal crossing divisor on $\X$. 
	\end{enumerate}
	
	Let $\Sigma_f = \{ s \in \X ; df(x) =0 \}$ be the critical locus of $f$
	and let $\Delta_f = f (\Sigma_f)$ be the discriminant locus of $f$.
	The fixed locus of $\iota$ is denoted by $\X^{\iota}$.
	Write
	$$
		\X^{\iota} = \X^{\iota}_H \sqcup \X^{\iota}_V,
	$$ 
	where any connected component of $\X^{\iota}_H$ is flat over $B$ and $\X^{\iota}_V \subset f^{-1}(\Delta_f)$.
	
	The blowup of $\X$ along $\X^{\iota}_H$ is denoted by $\widetilde{\X}$.
	Let $\tilde{\iota}$ be the involution of $\widetilde{\X}$ induced from $\iota$.
	Set $\Z = \widetilde{\X} / \tilde{\iota}$ and let $h : \Z \to B$ be the morphism induced from $\widetilde{\X} \to \X \xrightarrow{f} B$.
	By \cite[Theorem 3.6]{MR3928256} and $(3)$, $h|_{ h^{-1}(B^{\circ}) } : h^{-1}(B^{\circ}) \to B^{\circ}$ is a family of Calabi-Yau 4-folds.
	
	Let $0 \in B \setminus B^{\circ}$ and let $(\D', t)$ be a local coordinate on $B$ centered at $0$ such that $\D' \setminus \{ 0 \} \subset B^{\circ}$.
	We define functions $\tau_{BCOV, \Z /B}$ and $\tau_{\widetilde{M}_0, \mathcal{K}, \X /B}$ on $B^{ \circ }$ by
	$$
		\tau_{BCOV, \Z /B}(b) = \tau_{BCOV}( Z_b ), \quad \tau_{\widetilde{M}_0, \mathcal{K}, \X /B}(b) = \tau_{\widetilde{M}_0, \mathcal{K}}( X_b, \iota_b) \quad (b \in B^{\circ}).
	$$
	Then $ \varphi^*u = \tau_{BCOV, \Z /B} \cdot \tau_{\widetilde{M}_0, \mathcal{K}, \X /B}^{-1}$.
	By \cite[Theorem 6.5 and Proposition 6.8]{MR4255041} and \cite[Theorem 2.12]{I2}, 
	there exists $a_0 \in \mathbb{Q}$ such that
	\begin{align}\label{al2-2-3-1}
		\log \varphi^*u|_{\D'} (t) = a_0 \log |t|^2 +O( \log \log |t|^{-1}) \quad (t \to 0).
	\end {align}
	
	Let $(\D, s)$ be a local coordinate on $C$ centered at $p$ such that $\D^* \subset \mathcal{M}^{\circ}_{M_0}$.
	Since $\varphi : B \to C$ is a finite map with $\varphi(0) =p$, 
	there exists $\nu \in \mathbb{Z}_{>0}$ such that $\varphi^* s = t^{\nu} \varepsilon(t)$,
	where $\varepsilon(t) \in \mathbb{C} \{ t \}$ is a unit.
	Replacing $t$ with $t \varepsilon(t)^{\frac{1}{\nu}}$, we may assume $\varepsilon(t)=1$.
	Namely, $\varphi^* s = t^{\nu}$.
	Set $a =a_0 / \nu$.
	By (\ref{al2-2-3-1}), we have
	\begin{align}\label{al-2-3-2}
		\log u|_{\D} (s) = a \log |s|^2 +O( \log \log |s|^{-1}) \quad (s \to 0).
	\end {align}
	
	Since $\bar{\mathscr{D}}_{M_0^{\perp}}$ is irreducible, it follows from (\ref{al-2-3-2}) and  \cite[Lemma 4.2]{I2} the equation of currents on $\mathcal{M}_{M, \mathcal{K}}$
	\begin{align}\label{al-2-3-3}
		dd^c \log u = a \delta_{\bar{\mathscr{D}}_{M_0^{\perp}}} ,
	\end {align}
	where $ \delta_{\bar{\mathscr{D}}_{M_0^{\perp}}} $ is the Dirac delta current with support $\bar{\mathscr{D}}_{M_0^{\perp}}$.
	
	\begin{thm}\label{t-2-3-4}
		If $r(M_0) \leqq 17$ and $\bar{\mathscr{D}}_{M_0^{\perp}}$ is irreducible, 
		then there exists a constant $C_{M_0}$ depending only on $M_0$ such that for any 2-elementary K3 surface $(Y, \sigma)$ of type $M_0$
		$$
			\tau_{BCOV}(Z_Y) =C_{M_0} \tau_{\widetilde{M}_0, \mathcal{K}} (X_Y, \iota_Y).
		$$ 
	\end{thm}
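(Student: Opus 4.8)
\medskip
\noindent\textit{Proof strategy.}
The plan is to deduce the identity from the two facts already in place: writing $u(Y,\sigma)=\tau_{BCOV}(Z_Y)\cdot\tau_{\widetilde{M}_0,\mathcal{K}}(X_Y,\iota_Y)^{-1}$, the function $\log u$ is pluriharmonic on $\mathcal{M}^{\circ}_{M_0}$ by Lemma~\ref{l-2-3-1}, and $dd^c\log u=a\,\delta_{\bar{\mathscr{D}}_{M_0^{\perp}}}$ on $\mathcal{M}_{M_0}$ for some $a\in\mathbb{Q}$ by (\ref{al-2-3-3}). It then remains to prove (i) $a=0$, and (ii), granting (i), that $\log u$---then pluriharmonic on all of $\mathcal{M}_{M_0}$---is constant; this is the scheme of \cite[\S4]{I2} and \cite[\S8]{MR2047658}.

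The crux is (i), for which I would subtract from $\log u$ the Petersson potential of a reflective modular form adapted to $\bar{\mathscr{D}}_{M_0^{\perp}}$. Since $\bar{\mathscr{D}}_{M_0^{\perp}}$ is irreducible, the Borcherds--Gritsenko machinery (cf.\ \cite{MR2674883}, \cite{MR3859399}, \cite{MR4177283}) supplies a nonzero modular form $\Psi$ on $\Omega^{+}_{M_0^{\perp}}$ of some positive weight $w$ whose divisor on $\mathcal{M}_{M_0}$ equals $m\,\bar{\mathscr{D}}_{M_0^{\perp}}$ with $m\in\mathbb{Z}_{>0}$, so that $-dd^c\log\|\Psi\|^{2}_{\mathrm{Pet}}=w\,c_1(\lambda,\|\cdot\|_{\mathrm{Pet}})-m\,\delta_{\bar{\mathscr{D}}_{M_0^{\perp}}}$ for the automorphic line bundle $\lambda$ on $\mathcal{M}^{*}_{M_0}$, which is ample. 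Then $v:=u\cdot\|\Psi\|^{-2a/m}_{\mathrm{Pet}}$ satisfies $dd^c\log v=\tfrac{aw}{m}\,c_1(\lambda,\|\cdot\|_{\mathrm{Pet}})$, a smooth closed $(1,1)$-form on $\mathcal{M}_{M_0}$. Since $\log v$, like $\log u$, has at most logarithmic growth at the Baily--Borel boundary, which has codimension $\ge 2$ because $\dim\mathcal{M}_{M_0}=20-\rk(M_0)\ge 3$, it extends to an $L^1_{\mathrm{loc}}$ function on $\mathcal{M}^{*}_{M_0}$ with $dd^c\log v=\tfrac{aw}{m}\,c_1(\lambda,\|\cdot\|_{\mathrm{Pet}})$ still (growth along a set of codimension $\ge 2$ adds no mass). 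Taking cohomology classes gives $\tfrac{aw}{m}\,c_1(\lambda)=0$; as $c_1(\lambda)\ne 0$ and $w,m>0$, this forces $a=0$.

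For (ii), with $a=0$ the function $\log u$ is pluriharmonic on $\mathcal{M}_{M_0}$ and, by the same degeneration estimates (\cite[Theorem~6.5, Proposition~6.8]{MR4255041} and \cite[Theorem~2.12]{I2}) that underlie (\ref{al-2-3-3}), has at most logarithmic growth at the Baily--Borel boundary; a pluriharmonic function of such growth across an analytic set of codimension $\ge 2$ extends pluriharmonically, so $\log u$ extends to $\mathcal{M}^{*}_{M_0}$. That space being compact, the maximum principle forces $\log u$ constant, whence $u\equiv C_{M_0}:=\exp(\log u)$, which is the asserted identity.

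The step I expect to be the main obstacle is the boundary bookkeeping common to (i) and (ii): constructing $\Psi$ with the stated divisor and, above all, controlling the growth of $\log u$ and of $\log\|\Psi\|^{2}_{\mathrm{Pet}}$ at the Baily--Borel cusps sharply enough that the codimension-$2$ extension and the cohomology-class computation are legitimate; by contrast the main analytic input, the curvature identity $dd^c\log\tau_{\widetilde{M}_0,\mathcal{K},\xs}=dd^c\log\tau_{BCOV,\zs}$, is already supplied by Theorem~\ref{t-2-1-3}. An alternative, more hands-on route to (i) would match the leading singular terms of $\log\tau_{BCOV}(Z_Y)$ and of $\log\tau_{\widetilde{M}_0,\mathcal{K}}(X_Y,\iota_Y)$ along a generic transversal to $\bar{\mathscr{D}}_{M_0^{\perp}}$ using the degeneration formulas of \cite{MR4255041} and \cite{I2}.
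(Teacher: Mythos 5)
Your skeleton (pluriharmonicity of $\log u$, the current equation $dd^c\log u=a\,\delta_{\bar{\mathscr{D}}_{M_0^{\perp}}}$, then $a=0$, then extension across the codimension-$\geqq 2$ Baily--Borel boundary and the maximum principle) is exactly the paper's, and your step (ii) coincides with the paper's word for word. The divergence is at the crux, the proof that $a=0$, and there your argument has two genuine gaps. First, the existence of a modular form $\Psi$ on $\Omega^{+}_{M_0^{\perp}}$ whose divisor on $\mathcal{M}_{M_0}$ is exactly $m\,\bar{\mathscr{D}}_{M_0^{\perp}}$ (no boundary components, no extra reflective divisors) is a deep fact about reflective modular forms; it is available for 2-elementary lattices only through the Borcherds-product constructions of Yoshikawa, Gritsenko and Ma--Yoshikawa, is not uniform in $M_0$, and is precisely the kind of input the paper is careful not to assume at this stage (indeed Theorems \ref{t2-2-3-2} and \ref{t2-2-3-4} are later \emph{deduced from} Theorem \ref{t-2-3-4}, so invoking such a $\Psi$ here risks circularity in the applications). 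Second, your cohomological identity $\tfrac{aw}{m}c_1(\lambda)=0$ requires pushing the current equation for $\log v$ to $\mathcal{M}^{*}_{M_0}$, and for that you assert that $\log u$ has "at most logarithmic growth at the Baily--Borel boundary." No such estimate is established anywhere: the degeneration results quoted ((\ref{al2-2-3-1}), \cite[Theorem 6.5]{MR4255041}, \cite[Theorem 2.12]{I2}) control the behaviour of $\log u$ only along curves meeting $\bar{\mathscr{D}}_{M_0^{\perp}}$ inside $\mathcal{M}_{M_0}$, not near the cusps; and "$dd^c$ of an $L^1_{\mathrm{loc}}$ function acquires no mass on a codimension-$2$ set" is not true for arbitrary $L^1_{\mathrm{loc}}$ functions, only under positivity or local boundedness hypotheses you have not verified.

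The paper sidesteps both problems with a strictly interior argument: since $\operatorname{codim}(\mathcal{M}^{*}_{M_0}\setminus\mathcal{M}_{M_0})\geqq 2$ (here $\rk(M_0)\leqq 17$ is used), one can choose an irreducible projective curve $C\subset\mathcal{M}_{M_0}$ meeting $\bar{\mathscr{D}}_{M_0^{\perp}}$ transversally at smooth points of $C$ and avoiding the boundary entirely. On the normalization $\tilde{C}$, the form $\varphi^{*}\partial\log u$ is meromorphic with residue $a$ at each point of $\varphi^{-1}(C\cap\bar{\mathscr{D}}_{M_0^{\perp}})$, so the residue theorem gives $a\cdot\#\varphi^{-1}(C\cap\bar{\mathscr{D}}_{M_0^{\perp}})=0$, hence $a=0$. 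This is in effect your cohomological computation $a\,(C\cdot\bar{\mathscr{D}}_{M_0^{\perp}})=\int_C dd^c\log u=0$ localized to a single complete curve in the interior, which is why no modular form and no cusp estimates are needed. Your "alternative, more hands-on route" of matching leading singular terms along a transversal is closer in spirit to what the paper actually does, but as stated it too would only re-derive the value of $a$, not show it vanishes; the vanishing comes from the global constraint on the curve.
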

	
	\begin{proof}
		We show that $\log u$ is a constant function on $\mathcal{M}_{M_0}^{\circ}$.
		Since $r(M_0) \leqq 17$, we have $ \operatorname{codim} \mathcal{M}_{M_0}^* \setminus \mathcal{M}_{M_0} \geqq 2$.
		Therefore, we can choose an irreducible projective curve $C$ on $\mathcal{M}_{M_0}^*$ that satisfies the following conditions:
		\begin{align*}
			&(1) \hspace{5pt} C \subset \mathcal{M}_{M_0} \quad (2) \hspace{5pt} C \cap \bar{\mathscr{D}}_{M_0^{\perp}} \neq \emptyset \quad (3) \hspace{5pt} \operatorname{Sing}C \cap \bar{\mathscr{D}}_{M_0^{\perp}} =\emptyset \\
			&(4) \hspace{5pt} \text{$C$ intersects $\bar{\mathscr{D}}_{M_0^{\perp}}$ transversally}.
		\end{align*}
		Let $\varphi : \tilde{C} \to C$ be the normalization.
		Note that $\varphi^* \partial \log u$ is a meromorphic 1-form on $\tilde{C}$ and that its residue at $p \in \varphi^{-1}(C \cap \bar{\mathscr{D}}_{M_0^{\perp}})$ is equal to $a$ by (\ref{al-2-3-3}) and $(4)$.
		By the residue theorem, we have 
		$$
			\sum_{p \in \varphi^{-1}(C \cap \bar{\mathscr{D}}_{M_0^{\perp}})} a =0.
		$$
		By $(2)$, we obtain $a=0$, and $\log u$ extends to a pluriharmonic function on $\mathcal{M}_{M_0}$.
		Since $r(M_0) \leqq 17$, we have $ \operatorname{codim} \mathcal{M}_{M_0}^* \setminus \mathcal{M}_{M_0} \geqq 2$.
		Therefore $\log u$ extends to a pluriharmonic function on $\mathcal{M}_{M_0}^*$ by \cite[Satz 4]{MR0081960}.
		Since $\mathcal{M}_{M_0}^*$ is compact, $\log u$ is constant by the maximal principle.
		This completes the proof. 
	\end{proof}

	 By Fu-Zhang \cite[Theorem A]{MR4549963}, the BCOV invariant of Calabi-Yau manifolds is a birational invariant.
	 After the result of Fu-Zhang \cite[Theorem A]{MR4549963} and Theorem \ref{t-2-3-4}, we expect that the invariant $\tau_{M, \mathcal{K}}$ is also a birational invariant.
	 
	 \begin{conj}\label{c4-3-1}
	 	Let $M$ be an admissible sublattice of $L_2$ and let $\mathcal{K}_1, \mathcal{K}_2 \in \ktm$ be K\"ahler-type chambers.
		Let $(X_1, \iota_1)$ and $(X_2, \iota_2)$ be manifolds of $K3^{[2]}$-type with involution of type $(M, \mathcal{K}_1)$ and $(M, \mathcal{K}_2)$, respectively.
		If there exists a $\G$-equivariant birational map between $(X_1, \iota_1)$ and $(X_2, \iota_2)$, then 
		$
			\tau_{M, \mathcal{K}_1}(X_1, \iota_1) =\tau_{M, \mathcal{K}_2}(X_2, \iota_2).
		$
	 \end{conj}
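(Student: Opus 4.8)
The plan is to reduce the conjecture to the birational invariance of the BCOV invariant of the associated Calabi-Yau fourfolds. Given a $\G$-equivariant birational map $\phi\colon (X_1,\iota_1)\dashrightarrow(X_2,\iota_2)$, the first step is to produce a birational map $\psi\colon Z_1\dashrightarrow Z_2$ between the Calabi-Yau fourfolds $Z_i=\widetilde{X_i}/\tilde{\iota}_i$ of Camere-Garbagnati-Mongardi attached to $(X_i,\iota_i)$. Since $\phi$ is $\G$-equivariant, after shrinking we may take it to restrict to a $\G$-equivariant isomorphism between $\iota_i$-invariant dense open subsets $U_1\cong U_2$; it therefore descends to a birational map $\bar{\phi}\colon X_1/\iota_1\dashrightarrow X_2/\iota_2$. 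As the blowup $\widetilde{X_i}\to X_i$ along $X_i^{\iota_i}$ is $\tilde{\iota}_i$-equivariant, passing to quotients gives a birational morphism $Z_i\to X_i/\iota_i$ (a crepant resolution, by \cite[Theorem 3.6]{MR3928256}); composing with $\bar{\phi}$ then yields the desired $\psi$.

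The second step is to invoke Fu-Zhang \cite[Theorem A]{MR4549963}: the BCOV invariant is a birational invariant of Calabi-Yau manifolds (more generally of Calabi-Yau varieties with canonical singularities). Since $Z_1$ and $Z_2$ are smooth Calabi-Yau fourfolds which are birational by the first step, this gives $\tau_{BCOV}(Z_1)=\tau_{BCOV}(Z_2)$; here one should check that the available form of Fu-Zhang's theorem covers the relevant fourfolds, reducing to a projective situation in the spirit of the proof of Theorem \ref{t-2-3-4} if necessary.

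The third step is to transfer this equality back to $\tau_{M,\mathcal{K}_i}(X_i,\iota_i)$ by means of the comparison between $\tau_{M,\mathcal{K}}$ and $\tau_{BCOV}$. What is required is a comparison of the form $\tau_{BCOV}(Z)=C_M\,\tau_{M,\mathcal{K}}(X,\iota)$ with a constant $C_M$ depending only on the admissible sublattice $M$ and \emph{not} on the K\"ahler-type chamber $\mathcal{K}\in\ktm$; granting this, the conjecture follows at once. The chamber-independence is natural in view of Theorem \ref{p-3-4} and the computation of Section \ref{ss-2-2} (Theorem \ref{t-2-1-3}): $\log\tau_{M,\mathcal{K},\xs}$ and $\log\tau_{BCOV,\zs}$ satisfy one and the same curvature equation, whose right-hand side $\frac{(t+1)(t+7)}{16}\,c_1(f_*K_{\xs},h_{L^2})+\omega_{H^{\cdot}(\xss)}$ involves only data attached to $M$. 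Hence on any family $\log(\tau_{BCOV,\zs}/\tau_{M,\mathcal{K},\xs})$ is pluriharmonic, and a boundary analysis on the Baily-Borel compactification in the spirit of Theorems \ref{p2-1-2-2} and \ref{t-2-3-4} should force this ratio to be a global constant, with the same value for every chamber.

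The genuine obstacle is this third step. Theorem \ref{t-0-1} supplies the comparison only for $M=\widetilde{M}_0$ under the hypotheses $\rk(M_0)\leq 17$ and a single $O(M_0^{\perp})$-orbit of roots, so a proof in the stated generality would require either extending the comparison theorem to all admissible pairs $(M,\mathcal{K})$, or else a direct argument bypassing it: one would construct a one-parameter degeneration realizing the wall-crossing between $\mathcal{K}_1$ and $\mathcal{K}_2$, in which $X_1$ and $X_2$ appear as the two crepant birational models over the generic point of the special fibre, and then pin down the a priori locally constant ratio $\tau_{M,\mathcal{K}_1}/\tau_{M,\mathcal{K}_2}$ by an explicit local computation near the flopping locus, where the fixed surfaces $X_i^{\iota_i}$, the equivariant torsion $\tau_{\iota_i}(\bar{\Omega}_{X_i}^1)$, and the $L^2$-covolumes entering Definition \ref{d-3-1} all vary. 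Controlling this local contribution and showing that it leaves the ratio unchanged is the hard part.
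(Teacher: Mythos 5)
The statement you are trying to prove is stated in the paper as Conjecture \ref{c4-3-1}, and the paper offers \emph{no} proof of it: it is posed as an open problem, motivated by exactly the chain of reasoning you describe (Fu--Zhang's birational invariance of $\tau_{BCOV}$ together with the comparison theorem \ref{t-2-3-4}). So your proposal should be read as a reconstruction of the heuristic behind the conjecture rather than as a proof, and the obstacle you flag in your third step is precisely why it remains a conjecture.

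Concretely, the gap is real and is not a matter of routine verification. The comparison $\tau_{BCOV}(Z)=C\,\tau_{M,\mathcal{K}}(X,\iota)$ is established in the paper only for $M=\widetilde{M}_0=M_0\oplus\mathbb{Z}e$ with $\mathcal{K}$ a \emph{natural} chamber, and only under the hypotheses $\rk(M_0)\leqq 17$ and irreducibility of $\bar{\mathscr{D}}_{M_0^{\perp}}$. The pluriharmonicity of $\log(\tau_{BCOV,\zs}/\tau_{M,\mathcal{K},\xs})$ does hold for any admissible $(M,\mathcal{K})$ by Theorem \ref{t-2-1-3}, but promoting it to a global constant requires (a) the boundary estimates of Theorem \ref{p2-1-2-2} and of Eriksson--Freixas i Montplet--Mourougane along a normal crossing degeneration, (b) the residue-theorem argument, which forces each component of the discriminant to carry residue zero only when $\bar{\mathscr{D}}_{M^{\perp}}$ is irreducible (otherwise the vanishing of the sum of residues does not kill each term), and (c) $\operatorname{codim}(\mathcal{M}^*_{M,\mathcal{K}}\setminus\mathcal{M}_{M,\mathcal{K}})\geqq 2$, which fails for large $\rk(M)$. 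None of these is available for a general admissible pair among the $150$ types. Moreover, even where the comparison holds on each chamber separately, the two ratios live on a priori different modular varieties $\Omega^+_{M^{\perp}}/\Gamma_{M^{\perp},\mathcal{K}_i}$, and nothing in your argument identifies the two constants $C_{M,\mathcal{K}_1}$ and $C_{M,\mathcal{K}_2}$; the wall-crossing degeneration and local computation near the flopping locus that you propose for this purpose is exactly the missing content, and you do not carry it out. Your first two steps (descending the equivariant birational map to $Z_1\dashrightarrow Z_2$ and applying Fu--Zhang, modulo a projectivity check) are sound, but they do not close the argument.
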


\section{Some examples}		
	 
	 By Theorems \ref{t2-1-4-1}, \ref{t-2-3-4} and \cite[Theorem 0.1]{MR4177283}, the BCOV invariant of the Calabi-Yau 4-folds of Camere-Garbagnati-Mongardi can be expressed as the Petersson norm of an explicit automorphic form obtained as the tensor product of a Borcherds product and a Siegel modular form.
	 In this section, we prove that the BCOV invariant of the Calabi-Yau 4-fold $Z$ of Camere-Garbagnati-Mongardi is expressed as the Petersson norm of a Borcherds lift if every small deformation of $Z$ remains to be a Calabi-Yau 4-fold of Camere-Garbagnati-Mongardi.

\subsection{The BCOV invariant of the universal covering of the Hilbert scheme of 2-points on an Enriques surface}
	
	\begin{exa}\label{e-2-3-5}
		Consider an Enriques surface $S$.
		Its universal covering $\widetilde{S}$ is a K3 surface
		and the covering involution $\sigma : \widetilde{S} \to \widetilde{S}$ is antisymplectic.
		By \cite[Theorem 4.2.2]{MR633160}, the $\sigma$-invariant sublattice $H^2( \widetilde{S}, \mathbb{Z} )^{\sigma}$ of $H^2( \widetilde{S}, \mathbb{Z} )$ is isomorphic to $M_0 := U(2) \oplus E_8(2)$,
		where $U$ is the even unimodular hyperbolic lattice of signature $(1,1)$
		and $E_8$ is the $E_8$-lattice, i.e., negative-definite even unimodular lattice of rank $8$.
		Here, for a lattice $L =(\mathbb{Z}^r, (\cdot, \cdot)_{L})$, we define $L(k) =(\mathbb{Z}^r, k(\cdot, \cdot)_{L})$. 
		Therefore $(\widetilde{S}, \sigma)$ is a 2-elementary K3 surface of type $M_0$.
		By \cite[Theorem 3.1]{MR2863907} and \cite[6.1]{MR3928256}, $Z_{\widetilde{S}}=\widetilde{S^{[2]}}$, where $\widetilde{S^{[2]}}$ is the universal covering of the Hilbert scheme of 2-points on the Enriques surface $S$.
		
		By \cite[Theorem 5.1 and A]{MR3928256}, we have
		$
			\dim H^1(Z_{\widetilde{S}}, T_{Z_{\widetilde{S}}}) = \dim H^3(Z_{\widetilde{S}}, \Omega^1_{Z_{\widetilde{S}}}) =10.
		$
		Therefore the dimension of the Kuranishi space of $Z_{\widetilde{S}}$ is equal to that of $S$
		and every small deformation of $Z_{\widetilde{S}}$ is induced from a small deformation of $S$.
	\end{exa}
	
	Let us recall the construction of the Borcherds $\Phi$-function \cite{MR1396773}.
	Set $\Lambda = U \oplus E_8(2)$.
	Then $M_0^{\perp} = U(2) \oplus \Lambda$,
	where the orthogonal complement is considered in $L_{K3}$.
	An element of $M_0^{\perp}$ is denoted by $(m, n, \lambda)$,
	where $m, n \in \mathbb{Z}$ and $\lambda \in \Lambda$.
	
	Set
	$
		\mathscr{C}_{\Lambda} = \{ s \in \Lambda_{\mathbb{R}}; x^2>0 \}.
	$
	The tube domain $\Lambda_{\mathbb{R}} + i \mathscr{C}_{\Lambda}$ can be identified with the period domain $\Omega_{M_0^{\perp}}$ via the isomorphism
	$
		\Lambda_{\mathbb{R}} + i \mathscr{C}_{\Lambda} \to \Omega_{M_0^{\perp}} \quad y \mapsto \left[ \left( \frac{1}{2}, -\frac{y^2}{2}, y \right) \right].
	$
	Set $\rho =(0, 1, 0), \rho' =(1, 0, 0) \in \Lambda$.
	The connected component of $\mathscr{C}_{\Lambda}$ whose closure contains $\rho$ is denoted by $\mathscr{C}_{\Lambda}^+$.
	
	Let $\mathbb{H}$ be the upper half plane.
	Set $q = e^{2 \pi i \tau}$ for $\tau \in \mathbb{H}$.
	The Dedekind $\eta$-function is defined by
	$$
		\eta(\tau) = q^{\frac{1}{24}} \prod_{n > 0} (1-q^n).
	$$
	Let $\{ c(n) \}_{n \geqq -1}$ be the series defined by the generating function
	$$
		\sum_{n \geqq -1} c(n) q^n = \eta(\tau)^{8} \eta(2 \tau)^{-8} \eta(4 \tau)^{8}. 
	$$
	Set
	$
		\Pi^+ = \mathbb{Z}_{>0} \rho \cup \{ r \in \Lambda; (\rho, r)>0 \}.
	$
	We define the infinite product $\Phi(y)$ by
	$$
		\Phi(y) = e^{2 \pi i (\rho, y)} \prod_{r \in \Pi^+} \left( 1-e^{2 \pi i (r,y)} \right)^{ (-1)^{(r, \rho-\rho' )} c \left( \frac{r^2}{2} \right) } .
	$$
	By \cite[Theorem 3.2]{MR1396773} and \cite[Theorem 13.3 and Example 13.7]{MR1625724}, $\Phi(y)$ converges absolutely for $y \in \Lambda_{\mathbb{R}} + i \mathscr{C}_{\Lambda}^+$ with $(\operatorname{Im} y)^2 \gg 0$
	and extends to an automorphic form on $\Omega_{M_0^{\perp}}^+ (\cong \Lambda_{\mathbb{R}} + i \mathscr{C}_{\Lambda}^+ )$ for $O^+(M_0^{\perp})$ of weight $4$ with zero divisor $\mathscr{D}_{M_0^{\perp}} \cap \Omega_{M_0^{\perp}}^+$.
	
	Fix a vector $l_{\Lambda}$ with $l_{\Lambda}^2 \geqq 0$.
	Set 
	$
		K_{\Lambda}([z]) = (z, \bar{z})/| (z, l_{\Lambda}) |^2.
	$
	For an automorphic form $f$ on $\Omega^+_{M_0^{\perp}}$ for $O^+(M_0^{\perp})$ of weight $w$, we define the Petersson norm $\| f \|$ by
	$$
		\| f ([z]) \|^2 = K_{\Lambda}([z])^w | f ([z]) |^2.
	$$
	Then $\| f \|$ is a $C^{\infty}$ function on $\mathcal{M}_{M_0}^{\circ} = \Omega^+_{M_0^{\perp}} / O^+(M_0^{\perp})$.
	
	\begin{thm}\label{t2-2-3-2}
		There exists a constant $C>0$ such that for any Enriques surface $S$
		\begin{align*}
			\tau_{BCOV}( \widetilde{S^{[2]}} ) = C \| \Phi ([S]) \|,
		\end{align*}
		where $\| \Phi ([S]) \|$ is the Petersson norm of $\Phi$ evaluated at the period of $S$.
	\end{thm}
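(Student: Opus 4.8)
The plan is to obtain Theorem~\ref{t2-2-3-2} by concatenating three results that are now available: the comparison $\tau_{BCOV}(Z_Y)=C_{M_0}\,\tau_{\widetilde{M}_0,\mathcal{K}}(X_Y,\iota_Y)$ of Theorem~\ref{t-2-3-4}, the identification $\tau_{\widetilde{M}_0,\mathcal{K}}(Y^{[2]},\sigma^{[2]})=C\,\tau_{M_0}(Y,\sigma)^{-2(r-9)}$ of Theorem~\ref{t2-1-4-1}, and Yoshikawa's evaluation of $\tau_{M_0}$ in terms of the Borcherds $\Phi$-function from \cite{MR2047658}. We take $M_0=U(2)\oplus E_8(2)$, so that $r:=\rk(M_0)=10\le 17$, and by Example~\ref{e-2-3-5} a $2$-elementary K3 surface of type $M_0$ is exactly the universal cover $(\widetilde S,\sigma)$ of an Enriques surface $S$, with $Z_{\widetilde S}=\widetilde{S^{[2]}}$. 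Before invoking Theorem~\ref{t-2-3-4} we must check its remaining hypothesis, that $\bar{\mathscr{D}}_{M_0^{\perp}}$ is irreducible; equivalently, that the set of $(-2)$-vectors of $M_0^{\perp}=U\oplus U(2)\oplus E_8(2)$ forms a single $O^{+}(M_0^{\perp})$-orbit. Since $M_0^{\perp}$ splits off a unimodular hyperbolic plane $U$ and its discriminant quadratic form takes only integral values, every $(-2)$-vector has divisibility one, so Eichler's criterion yields a single orbit; alternatively this is classical and underlies the existence of the Borcherds $\Phi$-function with zero divisor precisely $\mathscr{D}_{M_0^{\perp}}$.

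Granting this, Theorem~\ref{t-2-3-4} provides a constant $C_{M_0}>0$ with $\tau_{BCOV}(Z_Y)=C_{M_0}\,\tau_{\widetilde{M}_0,\mathcal{K}}(X_Y,\iota_Y)$ for every $2$-elementary K3 surface $(Y,\sigma)$ of type $M_0$. Specialising to $(Y,\sigma)=(\widetilde S,\sigma)$ and using the identifications $Z_{\widetilde S}=\widetilde{S^{[2]}}$ and $(X_{\widetilde S},\iota_{\widetilde S})=(\widetilde S^{[2]},\sigma^{[2]})$ from Example~\ref{e-2-3-5} gives
\[
\tau_{BCOV}(\widetilde{S^{[2]}})=C_{M_0}\,\tau_{\widetilde{M}_0,\mathcal{K}}(\widetilde S^{[2]},\sigma^{[2]}).
\]
Next, Theorem~\ref{t2-1-4-1} applies because $r=10\le 17$ and $\bar{\mathscr{D}}_{M_0^{\perp}}$ is irreducible, and it gives a constant $C>0$ with $\tau_{\widetilde{M}_0,\mathcal{K}}(\widetilde S^{[2]},\sigma^{[2]})=C\,\tau_{M_0}(\widetilde S,\sigma)^{-2(r-9)}=C\,\tau_{M_0}(\widetilde S,\sigma)^{-2}$, where the exponent collapses to $-2$ precisely because $r-9=1$ for the Enriques lattice.

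Finally, one invokes the Enriques case of Yoshikawa's main theorem in \cite{MR2047658}: for $M_0=U(2)\oplus E_8(2)$ the associated Borcherds automorphic form of weight $4$ on $\mathcal{M}_{M_0}$ is the function $\Phi$ recalled above, and there is a constant $C'>0$ with $\tau_{M_0}(Y,\sigma)=C'\,\|\Phi([Y])\|^{-1/2}$. Substituting this into the chain and absorbing $C_{M_0}$, $C$ and $C'$ into a single constant $C>0$ yields $\tau_{BCOV}(\widetilde{S^{[2]}})=C\,\|\Phi([S])\|$, where we use that the period $[S]$ of the Enriques surface is by definition the period $[\widetilde S]\in\mathcal{M}^{\circ}_{M_0}$ of its K3 cover, so that $\|\Phi([S])\|$ is the well-defined value at $[S]$ of the $C^{\infty}$ function $\|\Phi\|$ on $\mathcal{M}^{\circ}_{M_0}$.

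No new analysis enters: the argument is purely the assembly of Theorems~\ref{t-2-3-4} and \ref{t2-1-4-1} with Yoshikawa's formula. Accordingly the only points needing care are the bookkeeping of the three universal constants and of the exponent $-2(r-9)=-2$, and the two lattice-theoretic and normalisation inputs --- the irreducibility of $\bar{\mathscr{D}}_{M_0^{\perp}}$, and the precise statement that the Borcherds form attached to $M_0=U(2)\oplus E_8(2)$ in \cite{MR2047658} is $\Phi$, realising $\tau_{M_0}=C'\|\Phi\|^{-1/2}$. I expect this last step, namely matching Yoshikawa's automorphic form and its exact power against the normalisation of $\tau_{M_0}$ used here, to be where essentially all the (modest) effort lies.
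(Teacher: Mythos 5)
Your proposal is correct and follows essentially the same route as the paper: the paper's proof is exactly the concatenation of Theorem \ref{t-2-3-4}, Theorem \ref{t2-1-4-1} with exponent $-2(r-9)=-2$, and Yoshikawa's formula $\tau_{M_0}(\widetilde S,\sigma)=C\|\Phi([S])\|^{-1/2}$ from \cite[Theorem 8.3]{MR2047658}. The only difference is cosmetic: for the irreducibility of $\bar{\mathscr{D}}_{M_0^{\perp}}$ the paper simply cites \cite[(1.10)]{MR771979} rather than arguing via Eichler's criterion (which, as stated for a single unimodular $U$, is slightly delicate here, though your classical fallback covers it).
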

	
	\begin{proof}
		Note that $\mathcal{M}_{M_0}^{\circ}$ is a coarse moduli space of Enriques surfaces.
		By \cite[(1.10)]{MR771979}, $\bar{\mathscr{D}}_{M_0^{\perp}}$ is irreducible.
		By Theorems \ref{t-2-3-4} and \ref{t2-1-4-1}, there exist constants $C_1$, $C_2$ such that
		\begin{align}\label{al-2-3-6}
			\tau_{BCOV}(\widetilde{S^{[2]}}) =C_1 \tau_{\widetilde{M}_0, \mathcal{K}} (X_{\widetilde{S}}, \iota_{\widetilde{S}}), \quad  \tau_{\widetilde{M}_0, \mathcal{K}} (X_{\widetilde{S}}, \iota_{\widetilde{S}}) = C_2  \tau_{M_0} (\widetilde{S}, \sigma)^{-2}
		\end{align} 
		for any Enriques surface $S$.
		By \cite[Theorem 8.3]{MR2047658}, there is a constant $C_3$ such that
		\begin{align}\label{al-2-3-7}
			\tau_{M_0} (\widetilde{S}, \sigma) = C_3 \| \Phi ([S]) \|^{-\frac{1}{2}}
		\end{align}
		for any Enriques surface $S$.
		Combining (\ref{al-2-3-6}) and (\ref{al-2-3-7}), we obtain the desired result.
	\end{proof}

\subsection{The case where $Y^{\sigma}$ is a union of rational curves}

	  \begin{exa}\label{e-2-3-8}
		Let $k \in \{1, \dots, 6\}$ 
		and let $\Lambda_k$ be an odd unimodular lattice of signature $(2,10-k)$.
	 	Set 
		$
			M_0=\Lambda_k(2)^{\perp}.
		$
		Here the orthogonal complement is considered in $L_{K3}$.
		By \cite[Theorem 4.2.2.]{MR633160}, the fixed locus $Y^{\sigma}$ of a 2-elementary K3 surface $(Y, \sigma)$ of type $M_0=\Lambda_k(2)^{\perp}$ is the union of $k$-rational curves. 
		In this case, $\bar{\mathscr{D}}_{M_0^{\perp}}$ is also irreducible by \cite[11.3]{MR3039773} and \cite[FIGURE 1]{MR2424924}.
		
		By \cite[Theorem 5.1 and A]{MR3928256}, we have
		$
			\dim H^1(Z_{Y}, T_{Z_{Y}}) = \dim H^3(Z_{Y}, \Omega^1_{Z_{Y}}) =10-k.
		$
		Since
		$
			\dim \Omega_{M_0^{\perp}} = 10-k,
		$
		the dimension of the Kuranishi space of $Z_{Y}$ is equal to that of $(Y, \sigma)$.
		Hence every small deformation of $Z_{Y}$ is induced from a deformation of the 2-elementary K3 surface $(Y, \sigma)$ of type $M_0$.
	\end{exa}

	Let $\Phi_k$ be the reflexive modular form on $\Omega_{M_0^{\perp}}$ characterizing the discriminant locus $\mathscr{D}_{M_0^{\perp}}$.
	As before, $\Phi_k$ is a certain Borcherds product and has an infinite product expansion.
	For the explicit construction, we refer \cite[Theorem 4.2]{MR2674883}, \cite[Theorem 5.4]{MR3859399}.
	
	\begin{thm}\label{t2-2-3-4}
		There exists a constant $C_k$ depending only on $1 \leqq k \leqq 6$ such that
		\begin{align*}
			\tau_{BCOV}(Z_Y) = C_k \| \Phi_k ([Y]) \|^{k+1}
		\end{align*}
		for any 2-elementary K3 surface $(Y, \sigma)$ of type $M_0$, where $\| \Phi_k ([Y]) \|$ is the Petersson norm of $\Phi_k$ evaluated at the period of $(Y, \sigma)$.
	\end{thm}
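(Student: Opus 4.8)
The plan is to deduce the statement by chaining together three results already at our disposal: the comparison of $\tau_{BCOV}$ with $\tau_{\widetilde{M}_0,\mathcal{K}}$ (Theorem~\ref{t-2-3-4}), the comparison of $\tau_{\widetilde{M}_0,\mathcal{K}}$ with Yoshikawa's invariant $\tau_{M_0}$ (Theorem~\ref{t2-1-4-1}), and Yoshikawa's explicit evaluation of $\tau_{M_0}$ as a Petersson norm \cite{MR2674883}. First I would verify that Theorems~\ref{t-2-3-4} and~\ref{t2-1-4-1} apply to $M_0=\Lambda_k(2)^{\perp}$. Since $\Lambda_k$ has signature $(2,10-k)$, the lattice $\Lambda_k(2)$ has rank $12-k$, so $\rk(M_0)=22-(12-k)=10+k\leqq 16\leqq 17$ for $1\leqq k\leqq 6$; moreover $M_0$ has signature $(1,9+k)$, hence is hyperbolic, and it is $2$-elementary because $L_{K3}$ is unimodular, so $\widetilde{M}_0=M_0\oplus\mathbb{Z}e$ is an admissible sublattice of $L_2$. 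The irreducibility of $\bar{\mathscr{D}}_{M_0^{\perp}}$ is recorded in Example~\ref{e-2-3-8} (via \cite[11.3]{MR3039773} and \cite[FIGURE 1]{MR2424924}). Thus the hypotheses of both theorems hold.

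Next I would combine the two comparison theorems. Fix a natural chamber $\mathcal{K}\in\operatorname{KT}(\widetilde{M}_0)$, and recall $(X_Y,\iota_Y)=(Y^{[2]},\sigma^{[2]})$ is a manifold of $K3^{[2]}$-type with involution of type $(\widetilde{M}_0,\mathcal{K})$. By Theorem~\ref{t-2-3-4} there is a constant $C_{M_0}>0$, depending only on $M_0$, with $\tau_{BCOV}(Z_Y)=C_{M_0}\,\tau_{\widetilde{M}_0,\mathcal{K}}(X_Y,\iota_Y)$ for every $2$-elementary K3 surface $(Y,\sigma)$ of type $M_0$. By Theorem~\ref{t2-1-4-1}, writing $r=\rk(M_0)=10+k$ so that $-2(r-9)=-2(k+1)$, there is a constant $C>0$, depending only on $M_0$, with $\tau_{\widetilde{M}_0,\mathcal{K}}(X_Y,\iota_Y)=C\,\tau_{M_0}(Y,\sigma)^{-2(k+1)}$. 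Hence $\tau_{BCOV}(Z_Y)=C_{M_0}C\,\tau_{M_0}(Y,\sigma)^{-2(k+1)}$.

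It remains to insert Yoshikawa's formula for $\tau_{M_0}$. By Example~\ref{e-2-3-8} (via \cite[Theorem 4.2.2]{MR633160}) the fixed locus $Y^{\sigma}$ of a $2$-elementary K3 surface of type $M_0=\Lambda_k(2)^{\perp}$ is a disjoint union of $k$ rational curves, so it contains no curve of positive genus; in this situation the Siegel-modular-form factor in Yoshikawa's evaluation of $\tau_{M_0}$ is trivial, and the main theorem of \cite{MR2674883} (see also \cite[Theorem 5.4]{MR3859399}) provides a constant $C'>0$, depending only on $M_0$, such that $\tau_{M_0}(Y,\sigma)=C'\,\|\Phi_k([Y])\|^{-1/2}$ for all such $(Y,\sigma)$. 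Substituting, $\tau_{BCOV}(Z_Y)=C_{M_0}C\,(C')^{-2(k+1)}\,\|\Phi_k([Y])\|^{k+1}$. Since the odd unimodular lattice $\Lambda_k$, and hence $M_0$, is determined up to isometry by $k$, the constant $C_k:=C_{M_0}C\,(C')^{-2(k+1)}$ depends only on $k$, which is the desired identity.

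I do not expect a genuine difficulty here: the statement is essentially a formal consequence of the earlier comparison results. The only points requiring attention are (i) confirming that the power of $\|\Phi_k\|$ in Yoshikawa's formula for $\tau_{M_0}$ is exactly $-\frac{1}{2}$ with the normalization of $\Phi_k$ (weight $k+4$) used here, so that the exponents combine to $k+1$, and (ii) tracking that each of the three multiplicative constants entering the computation depends on $M_0$, equivalently on $k$, alone. We remark that the deformation-completeness observed in Example~\ref{e-2-3-8} — that every small deformation of $Z_Y$ stays within the family of Camere-Garbagnati-Mongardi fourfolds — explains the geometric meaning of the formula but is not logically needed for the argument, since Theorem~\ref{t-2-3-4} already compares $\tau_{BCOV}$ and $\tau_{\widetilde{M}_0,\mathcal{K}}$ for every $(Y,\sigma)$ of type $M_0$.
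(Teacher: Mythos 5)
Your proposal is correct and follows essentially the same route as the paper: chain Theorem \ref{t-2-3-4}, Theorem \ref{t2-1-4-1} (with $r=10+k$ giving the exponent $-2(k+1)$), and Yoshikawa's evaluation $\tau_{M_0}(Y,\sigma)=C\,\|\Phi_k([Y])\|^{-1/2}$. The extra verifications you include (rank and hyperbolicity of $M_0$, irreducibility of $\bar{\mathscr{D}}_{M_0^{\perp}}$) are exactly the hypotheses the paper records in Example \ref{e-2-3-8}, so there is nothing to add.
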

	
	\begin{proof}
		By Theorem \ref{t-2-3-4} and \ref{t2-1-4-1}, there exist constants $C_{k,1}$, $C_{k,2}$ depending only on $k$ such that for any 2-elementary K3 surface $(Y, \sigma)$ of type $M_0$
		\begin{align}\label{al-2-3-9}
			\tau_{BCOV}(Z_Y) =C_{k,1} \tau_{\widetilde{M}_0, \mathcal{K}} (X_Y, \iota_Y), \quad  \tau_{\widetilde{M}_0, \mathcal{K}} (X_Y, \iota_Y) = C_{k,2}  \tau_{M_0} (Y, \sigma)^{-2(k+1)}.
		\end{align} 
		By \cite[Theorem 4.2.(1)]{MR2674883} and \cite[Theorem 0.1]{MR3039773}, there exists a constant $C_{k,3}$ with
		\begin{align}\label{al-2-3-10}
			\tau_{M_0} (Y, \sigma) = C_{k,3} \| \Phi_k ([Y]) \|^{-\frac{1}{2}}
		\end{align}
		for any 2-elementary K3 surface $(Y, \sigma)$ of type $M_0$.
		Combining (\ref{al-2-3-9}) and (\ref{al-2-3-10}), we obtain the desired result.
	\end{proof}

	 According to the genus one mirror symmetry conjecture (\cite[Conjectures 1 and 2]{MR4475251}), for a Calabi-Yau manifold, the BCOV invariant of its mirror family is equivalent to a certain generating series of the genus one Gromov-Witten invariants of the Calabi-Yau manifold.
	 For Calabi-Yau hypersurfaces in the projective spaces, this conjecture is verified by Zinger \cite{MR2403808} \cite{MR2505298}, Fang-Lu-Yoshikawa \cite{MR2454893} in dimension $3$, and by Zinger \cite{MR2403808} \cite{MR2505298}, Eriksson-Freixas i Montplet-Mourougane \cite{MR4475251} in arbitrary dimension.
	 For Enriques surfaces, an analogue of the genus one mirror symmetry conjecture holds true by Oberdieck \cite{oberdieck2023non} and Yoshikawa \cite{MR2047658}.
	 For a smooth complete intersection of two cubic hypersurfaces of $\mathbb{P}^5$, the genus one mirror symmetry conjecture is verified by Popa \cite{MR3003261} and Eriksson-Pochekai \cite{eriksson-pochekai}.
	 
	At least to the author, it is unclear what is the mirror of the Calabi-Yau 4-folds in Examples \ref{e-2-3-5} and \ref{e-2-3-8}.
	According to Camere-Garbagnati-Mongardi \cite[Corollary 5.8]{MR3928256}, even if $(\check{Y}, \check{\sigma})$ is a lattice theoretic mirror of a 2-elementary K3 surface $(Y, \sigma)$ of type $M_0$, the Hodge diamond of $Z_{\check{Y}}$ is not mirror to the one of $Z_{Y}$.
	Namely, $h^{1,1}(Z_Y) \neq h^{3,1}(Z_{\check{Y}})$.
	
	 
	 Let $Z$ be the Calabi-Yau 4-fold in Examples \ref{e-2-3-5} and \ref{e-2-3-8}.
	 By Theorems \ref{t2-2-3-2} and \ref{t2-2-3-4}, we expect that if there exists a mirror Calabi-Yau 4-fold of $Z$, 
	 then its genus one Gromov-Witten invariants are encoded by a power of $\Phi$ or $\Phi_k$ in a certain way.

\bibliography{Reference}
\bibliographystyle{amsplain}

\end{document}